\documentclass[12pt, twoside,article]{memoir}

\usepackage[english]{babel}
\usepackage{amsmath,amsfonts,amssymb,amsthm,
		     url,makeidx,bbm,enumitem,stmaryrd,
		      graphicx,tikz-cd,mathrsfs,marvosym,etoolbox,
		      pgfplots,extarrows,bussproofs, booktabs,
		     csquotes
			}
\usetikzlibrary{arrows.meta,quotes,babel,intersections,angles,matrix,calc,patterns,graphs,3d,positioning}
\usepackage[letterspace = 125]{microtype}

\newtheoremstyle{bfnote}
	{}
	{}
	{\itshape}
	{0pt}
	{\bfseries}
	{.}
	{.5em}
	{\thmname{#1} \thmnumber{#2}\thmnote{ (#3)}}

\theoremstyle{bfnote}
\newtheorem{thm}[paragraph]{Theorem}
\newtheorem{lem}[paragraph]{Lemma}

\newtheorem{cor}[paragraph]{Corollary}

\newtheoremstyle{bfdefinition}
	{}
	{}
	{}
	{0pt}
	{\bfseries}
	{.}
	{.5em}
	{\thmname{#1}\thmnumber{ #2}\thmnote{ (#3)}}

\theoremstyle{bfdefinition}
\newtheorem{defn}[paragraph]{Definition}

\newtheorem{exm}[paragraph]{Example}
\newtheorem{exms}[paragraph]{Examples}
\newtheorem{rem}[paragraph]{Remark}

\newtheoremstyle{bfremark}
	{}
	{}
	{}
	{0pt}
	{\bfseries}
	{:}
	{.5em}
	{\thmname{#1}\thmnumber{ #2}\thmnote{ (#3)}}

\theoremstyle{bfremark}

\counterwithout{equation}{chapter}

\newcommand{\itemref}[2]{\hyperref[#2]{(#1-{\ref*{#2}})}}

\makeatletter
\newcommand{\latex}{%
    L\kern -.36em{%
    \sbox \z@ T\vbox to\ht 0{\hbox {\check@mathfonts \fontsize \sf@size \z@ \math@fontsfalse \selectfont A}\vss }}\kern -.15em%
    T\kern-.1667em\lower.5ex\hbox{E}\kern-.125em X%
}
\newenvironment{prf}[1][\proofname]{\par
	\pushQED{\qed}%
	\normalfont\topsep6\p@\@plus6\p@\relax
	\trivlist
	\item\relax
		{\bfseries
		#1\@addpunct{:}}\hspace\labelsep\ignorespaces}
	{%
	\popQED\endtrivlist\@endpefalse
	}
\newcommand{\solutionname}{Solution}

\renewcommand{\l@paragraph}{\@dottedtocline{4}{1.5em}{2.3em}}

\makeatother

\def\@pnumwidth{2.2em}

\setsecnumdepth{paragraph}

\DeclareSymbolFont{stmry}{U}{stmry}{m}{n}

\tikzcdset{simto/.tip={Glyph[glyph math command=rightarrowtriangle]}}

\makeindex

\allowdisplaybreaks

\EnableBpAbbreviations

\DeclareMathSymbol\boxslash\mathbin{stmry}{"1B}

\DeclareMathOperator{\codom}{cod}
\DeclareMathOperator{\dHom}{dHom}
\DeclareMathOperator{\dom}{dom}
\DeclareMathOperator{\fHom}{fHom}

\DeclareMathOperator{\Fun}{Fun}

\newcommand{\Grothendieck}[2]{\sum_{#1}#2}

\DeclareMathOperator{\id}{id}

\DeclareMathOperator{\opp}{\!^{\mathrm{op}}}
\DeclareMathOperator{\quadd}{\quad\quad}

\DeclareMathOperator{\pr}{\mathsf{pr}}

\newcommand{\prone}[1]{\mathtt{prn}(#1)}

\newcommand{\rueckzug}[5][north west]{%
	\coordinate (a) at (\tikzcdmatrixname-#2)  ; %
	\coordinate (b) at (\tikzcdmatrixname-#2.east)  ; %
	\coordinate (b1) at ($(\tikzcdmatrixname-#2.south west)-( 4.5pt, 4.5pt)$) ; %
	\coordinate (b2) at ($(\tikzcdmatrixname-#2.south east)-(-4.5pt, 4.5pt)$) ; %
	\coordinate (b3) at ($(\tikzcdmatrixname-#2.north west)-( 4.5pt,-4.5pt)$) ; %
	\coordinate (b4) at ($(\tikzcdmatrixname-#2.north east)-(-4.5pt,-4.5pt)$) ; %
	\coordinate (c) at (\tikzcdmatrixname-#2.south)  ; %
	\coordinate (e) at (\tikzcdmatrixname-#3)  ; %
	\coordinate (f) at (\tikzcdmatrixname-#4)  ; %
	\path[name path = A] let
	\p3 = ($ (e) - (a)$) , %
	\p4 = ($ (f) - (a)$) , %
	\n3 = {veclen(\x3,\y3)} , %
	\n4 = {veclen(\x4,\y4)} , %
	\n5 = {5} , %
	\p5 = ($(\x3/\n3*\n5, \y3/\n3*\n5)$) ,%
	\p6 = ($(\x4/\n4*\n5, \y4/\n4*\n5)$) ,%
	\n6 = {.9*min(\n3,\n4)} , %
	\p7 = ($(\x3/\n3+\x4/\n4, \y3/\n3+\y4/\n4)$), %
	\p8 = ($(\x7*\n6 ,\y7*\n6)$) %
	in
	(a) -- ($(a) + (\p8)$)
	coordinate (A) at (\p5)
	coordinate (B) at (\p6);
	\path[save path = \pathB, name path = B] (b1) -- (b2) -- (b4) -- (b3) -- (b1);
	\draw[name intersections={of=A and B}]  (intersection-1) -- ($(intersection-1) - (A)$)
	(intersection-1) -- ($(intersection-1) - (B)$);
	\node[name intersections={of=A and B}, anchor = #1, inner sep = 0pt] (notiz) at  (intersection-1) {$\scriptscriptstyle\mathrm{#5}$};
}
\newcommand{\sasmap}[1]{(\hat{#1},\tilde{#1},#1)}
\newcommand{\spacer}[1][\kern0pt]{\,\text{-\lower2pt\hbox{$\scriptscriptstyle#1$}}\,}
\DeclareMathOperator{\spl}{spl}
\DeclareMathOperator{\str}{str}
\DeclareMathOperator{\To}{\Rightarrow}

\newcommand{\tv}[1]{\mathtt{tv}(#1)}

\newcommand{\bbone}{\text{\usefont{U}{bbold}{m}{n}1}}
\MakeRobust{\bbone}
\newcommand{\bbtwo}{\text{\usefont{U}{bbold}{m}{n}2}}
\MakeRobust{\bbtwo}
\newcommand{\bbthree}{\text{\usefont{U}{bbold}{m}{n}3}}
\MakeRobust{\bbthree}
\newcommand{\bbfour}{\text{\usefont{U}{bbold}{m}{n}4}}
\MakeRobust{\bbfour}
\newcommand{\bbfive}{\text{\usefont{U}{bbold}{m}{n}5}}
\MakeRobust{\bbfive}
\newcommand{\bbsix}{\text{\usefont{U}{bbold}{m}{n}6}}
\MakeRobust{\bbsix}
\newcommand{\bbseven}{\text{\usefont{U}{bbold}{m}{n}7}}
\MakeRobust{\bbseven}
\newcommand{\bbeight}{\text{\usefont{U}{bbold}{m}{n}8}}
\MakeRobust{\bbeight}
\newcommand{\bbnine}{\text{\usefont{U}{bbold}{m}{n}9}}
\MakeRobust{\bbnine}
\newcommand{\bbnull}{\text{\usefont{U}{bbold}{m}{n}0}}
\MakeRobust{\bbnull}

\DeclareMathOperator{\Cat}{\mathbf{Cat}}

\DeclareMathOperator{\CoAlg}{\mathsf{CoAlg}}

\DeclareMathOperator{\CompCat}{\mathsf{ComprC}}

\DeclareMathOperator{\depC}{\mathsf{depC}}
\DeclareMathOperator{\depSC}{\mathsf{(dep,\Sigma)C}}

\DeclareMathOperator{\Fib}{\mathsf{Fib}}

\DeclareMathOperator{\gCwF}{\mathsf{gCwF}}

\DeclareMathOperator{\HCompCat}{\mathsf{HComprC}}
\DeclareMathOperator{\OpFib}{\mathsf{OpFib}}

\DeclareMathOperator{\sas}{\mathsf{sas}}
\DeclareMathOperator{\Sets}{\mathsf{Sets}}
\DeclareMathOperator{\WCmd}{\mathsf{WCmd}}

\newcommand{\C}[1]{\mathscr{#1}}

\newlength{\boxi}
\newlength{\boxilaenge}
\newlength{\boxii}
\newlength{\abstractwdth}
\newlength{\keywordwdth}
\newlength{\authorwdth}
\newlength{\Infowdth}
\newcounter{Autorenzahl}
\makeatletter
\let\authors\@empty
\let\institutions\@empty
\renewcommand{\author}[4]{%
\ifx\@empty\authors
	\gdef\authors{\vtop{\hbox{\vtop{\hsize\boxilaenge\noindent#1~\textsc{#2}\textsuperscript{#4}\\\footnotesize\href{mailto:#3}{\texttt{#3}}\normalsize}}}\refstepcounter{Autorenzahl}}%
\else
	\g@addto@macro\authors{
	\vtop{
            \hbox{\kern10pt%
		    \vtop{\noindent#1~\textsc{#2}\textsuperscript{#4}}
	    \vtop{\footnotesize\href{mailto:#3}{\texttt{#3}}\normalsize}
	    \kern10pt%
		}
	}
	\refstepcounter{Autorenzahl}
	\ifnum\value{Autorenzahl}>2 %
		\g@addto@macro\authors{\\}
	\else
		\relax
	\fi
	}%
\fi}%
\newcommand{\institute}[2]{%
\ifx\@empty\institutions
	\gdef\institutions{\textsuperscript{#1}\!#2}%
\else
	\g@addto@macro\institutions{\textsuperscript{#1}\!#2}%
\fi
}%
\makeatother

\newcommand{\titlesetting}[5]{%
	\hrule%
	\kern25pt%
	\hbox to \textwidth{\vbox{#1}}%
	\kern25pt%
	\hrule%
	\hbox to \textwidth{%
		\vtop{%
			\kern 10pt%
            \hbox to .65\textwidth{%
				\kern10pt\vtop{\hsize\boxii \noindent\textls{Abstract}%
                \par\vskip-.5em\noindent                \rule{1.3\abstractwdth}{.4pt}%
                \par\noindent%
\small#2\normalsize\par\medskip\noindent\textls{Keywords}%
\par\vskip-.5em\noindent
                \rule{1.3\keywordwdth}{.4pt}%
                \par\noindent
\small#5}} \kern 10pt
		}%
        \vrule%
		\vtop{%
			\kern10pt%
            \hbox to \boxi{%
            \kern 10pt \vtop{\hsize\boxilaenge
            \noindent\textls{Authors}%
            \par\vskip-.5em\noindent\rule{1.3\authorwdth}{.4pt}%
            \par\smallskip\noindent%
            #3\normalsize}
            }
			\kern 10pt
			}
}
	\hrule%
    \hbox to \textwidth{%
		\vtop{%
			\kern 10pt\hbox to .96\textwidth{%
				\kern10pt\vtop{\hsize.96\textwidth \noindent\textls{Article Info}\par\vskip-.5em\noindent
                \rule{1.3\Infowdth}{.4pt}\par\noindent
\small#4}}\kern 10pt}}
    \hrule
}

\newcommand{\titlesetter}[3]{%
	\vbox{%
		\hrule
	\kern25pt%
	\hbox to \textwidth{\vbox{#1}}%
	\kern10pt
	\hbox to \textwidth{%
    \hfill
		\vtop{%
			\kern10pt%
			\hbox{%
				\hss
				\authors
               			\hss
		}
        \kern 15pt
        }
        \hfill
}
 \hbox to \textwidth{%
        \kern10pt \vtop{%
        \hbox{\noindent\small\institutions}\normalsize
        \kern 15pt}
	       }
}
	\hrule%
\hbox to \textwidth{%
		\vtop{%
			\kern 10pt\hbox to .96\textwidth{%
				\kern10pt\vtop{\hsize.96\textwidth \noindent\textls{Abstract}\par\vskip-.5em\noindent
                \rule{1.3\abstractwdth}{.4pt}\par\noindent
\small#2}}\kern 10pt
\hbox to .96\textwidth{%
	\kern10pt\vtop{\hsize.96\textwidth %
\noindent\textls{Keywords}\par\vskip-.5em\noindent
                \rule{1.3\keywordwdth}{.4pt}\par\noindent
\small#3}}\kern 10pt
}}
\hrule
}

\setstocksize{296mm}{210mm}		
\settrimmedsize{\stockheight}{\stockwidth}{*}
\settypeblocksize{250mm}{165mm}{*}	
\setlrmargins{*}{*}{1}
\setulmargins{*}{*}{1.22}
\checkandfixthelayout

\usepackage[style = alphabetic,backend=biber,backref, backrefstyle=none]{biblatex}
\setlist[enumerate]{leftmargin = 1.5em}
\setlist[itemize]{leftmargin = 1.5em}
\usepackage[hidelinks]{hyperref}
\hypersetup{
	pdfauthor 	={Luis Antonio Gambarte},
	pdftitle 	={2-dep,Sigma-categories are not generalised categories with families},
	pdfsubject	={We show that the generalised categories with families are not biequivalent to the (2-dep,Sigma)-categories of Petrakis},
	bookmarksnumbered
}

\usepackage{cleveref}

\author{Luis}{Gambarte}{gambarte@math.lmu.de}{1}
\institute{1}{Ludwig-Maximilians-Universität M\"unchen, Theresienstraße 39, D-80333 München}

\begin{document}

\setbeforeparaskip{.25em plus 1em minus .2em}
\pagestyle{plain}
\titlesetter{%
\centering\LARGE\bfseries (2-dep,$\Sigma$)-categories are not generalised categories with families%
}{%
	The notion of a generalised category with families, introduced by Coraglia and Emmenegger is one of the most general notions introduced to capture categorically the notion of dependent typing.
	It is shown that these generalised categories with families are biequivalent to comprehension categories. 
	We will show that the notion of a (2-dep,$\Sigma$)-category, an extension of the notion of a (dep,$\Sigma$)-category, introduced by Petrakis, is not biequivalent to generalised categories with families, but instead is equivalent to a direct generalisation of that notion.%
}{%
	Category theory, categories with families, dependent type theory, comprehension categories
}%
\normalsize
\kern22pt
\chapter{Introduction}

From the study of the semantics of Martin-Löf type theory and other type theories with dependent types a large variety of categorical structures has arisen.
These categorical structures roughly fall into one of two sorts, the first is oriented around indexed categories, the second around Grothendieck fibrations. 
The interplay between these two sorts rests on the correspondence ``indexed categories \( \mapsfrom \mapsto  \) fibrations'' established by Grothendieck.
Some of the structures falling into the first sort are categories with families~\cite{dybjerInternalTypeTheory1996}, categories with attributes~\cite{cartmellGeneralisedAlgebraicTheories1978} and fam-categories~\cite{petrakisCategoriesDependentArrows2023}. whereas comprehension categories~\cite{jacobsComprehensionCategoriesSemantics1993,jacobsCategoricalTypeTheory1991,jacobsCategoricalLogicType1999}, \( C \)-systems \cite{voevodskyCsystemModuleJfrelative2023} and many others fall into the second sort. 
Most of these structures are compared in~\cite{ahrensComparingSemanticFrameworks2025a}, where it is found that all of the structure examined therein are equivalent to some special type of comprehension category.
Generalised categories with families are introduced in~\cite{coragliaContextJudgementDeduction2024}, and in~\cite{coraglia2categoricalAnalysisContext2024} it is shown that these are equivalent to comprehension categories.

In~\cite{petrakisCategoriesDependentArrows2023} additionally dep-structures are introduced for fam-categories---which are generalised to 2-dep-structures for 2-fam-categories in~\cite{ehrhardt2depCategories2024}---which seems like stacking another layer of ``attributes'' or ``families'' on top of an existing one. 
However, this is not the case, as this paper aims to show. 
More specifically, we aim to show that (2-dep,$\Sigma$)-categories are equivalent to a generalisation of comprehension categories, which we will introduce, which generalise generalised categories with families in a straightforward way. 

To make matters simpler we do not work with fam-categories, but introduce dep-structures on indexed categories, which reduces the amount of new terminology, and is entirely equivalent, if one reduces oneself to the case that all involved classes are sets.
Table~\ref{table::terminology} gives an overview of the differences between the original terminology and the terminology in this paper.

\begin{table}[h]
	\caption{On the terminology in this paper}
	\centering
\begin{tabular}{ll}
	\toprule
	\emph{originial notion} & \emph{notion used in this paper}
	\\
	\cmidrule(lr){1-2}
	fam-categories & --
	\\
	2-fam-categories & indexed categories
	\\
	dep-categories & --
	\\
	2-dep-categories & dep-categories
	\\
	(2-fam,$\Sigma$)-categories & indexed categories with Sigma-objects
	\\
	(2-dep,$\Sigma$)-categories & (dep,$\Sigma$)-categories
	\\
	\bottomrule
\end{tabular}
\label{table::terminology}
\end{table}

Note that what we call a dep-category in this paper is equivalent to what is called a 2-dep-category in the literature. 
However, nothing is lost, as all the usual non ``2-'' notions can be recovered by considering indexed sets \( I \colon \C C\opp \to \Sets \).
This paper is structured as follows:
\begin{itemize}
	\item In Section~\ref{sec::indCat} we recall the notions of indexed categories and comprehension categories from the literature, and define what it means for a indexed category to have Sigma objects. (This corresponds to (2-fam,$\Sigma$)-categories in the original work.)
	\item In Section~\ref{sec::deparr} we introduce dep-structures on indexed categories and sas-towers, and we show that these two notions are 2-equivalent. 
	\item In Section~\ref{sec::depsig} we introduce (dep,$\Sigma$)-structures on indexed categories with Sigma-objects and higher comprehension categories and show that the two notions are 2-equivalent. 
	\item In Section~\ref{sec: Comparison} we recall the definition of a generalised category with families and compare it to the definition of a higher comprehension category and explain how the specialisation of a generalised category with families corresponds to the canonical (dep,$\Sigma$)-structure on an indexed category with Sigma objects, introduced in~\cite{petrakisCategoriesDependentArrows2023}.
\end{itemize}
For all notions from (2-)-category theory used herein without explanation we refer to \cite{barrCategoryTheoryComputing1991,johnson2DimensionalCategories2020}, for an introduction to the theory of fibrations we refer to~\cite{jacobsCategoricalLogicType1999}.

\par\bigskip\noindent
\textbf{Acknowledgements.}
We would like to thank Jacopo Emmenegger for the discussions at Luminy, which sparked our work on the topic of this paper, as well as Peter Lumsdaine for useful comments and suggestions on a first draft.

\chapter{Indexed categories and Sigma objects}
\label{sec::indCat}

We will briefly recall some facts about indexed categories and comprehension categories that we need for this paper. 
All results will be stated without proofs, but references to proofs will be given.

\begin{defn}[Indexed categories]
	A split indexed category is a functor \( I \colon \C C\opp \to \Cat \).
	A \emph{map} between split indexed categories \( I \to I' \) is a  pair \( (F,F_{\spacer}) \) of a functor \( F \colon \C C \to \C D \) and a natural transformation \( F_{\spacer} \colon I \Rightarrow J \circ F\opp \).
	In a diagram this could be visualised as
	\[ \begin{tikzcd}
		\C C\opp \ar[dr,"I"{name = U}] \ar[dd,"F\opp"'] 
		\ar[dd, Rightarrow,"F_{\spacer}"', shorten = 1.75em, shift left = 2em] 
		\\
		& \Cat.
		\\
		\C D\opp \ar[ur,"J"{name = V, swap}] 
	\end{tikzcd} \]
	A \emph{transformation between maps} \( (F,F_{\spacer}) \To (G,G_{\spacer}) \)  of split indexed categories is a 
	natural transformation \( \eta \colon F\opp \To G\opp \) such that \( (1_I \ast \eta) \circ F_{\spacer} =  G_{\spacer} \).
	\[ \begin{tikzcd}
		\C C\opp \ar[dr,"I"{name = U, near start}, to path = {(\tikztostart.east) -| (\tikztotarget.north) \tikztonodes}, rounded corners] 
		\ar[dd,"F\opp"{swap, name = V}, bend right = 30] 
		\ar[dd,"G\opp"{name = W}, bend left = 30]
		\\
		&[3.5em]\Cat.
		\\
		\C D\opp \ar[ur,"J"{swap, name = X, near start}, to path = {(\tikztostart.east) -| (\tikztotarget.south) \tikztonodes}, rounded corners]
		\ar[from = V, to = W, "\eta", Rightarrow,shorten = .3em]
		\ar[from = U, to = X,shorten = 2em, shift right = 1em,"F_{\spacer}"{swap, name = Y}, Rightarrow]
		\ar[from = U, to = X,shorten = 2em, shift left = 1em,"G_{\spacer}"{name = Z}, Rightarrow]
	\end{tikzcd} \]
	We denote the 2-category whose objects (0-cells) are split indexed categories, whose 1-cells are maps between  split indexed categories and whose 2-cells are transformations between this maps by \( \mathsf{IC} \).
\end{defn}

\begin{defn}[2-Category of fibrations]
	We will denote by \( \Fib \) the category with
	\begin{itemize}
		\item objects (0-cells) Grothendieck fibrations \( p \colon \C E \to \C B \),
		\item 1-cells maps between Grothendieck fibrations as defined in \cite{jacobsCategoricalLogicType1999}, that is functors \( \hat F \colon \C E \to \C E', F \colon \C E \to \C E' \) such that 
			\[ \begin{tikzcd}
				\C E \ar[d,"p"'] \ar[r,"\hat F"] & \C E' \ar[d,"p'"] 
				\\
				\C B \ar[r,"F"] & \C B'
			\end{tikzcd} \]
			 commutes and such that, if \( f \) in  \( \C E \) is \( p\)-cartesian for \( g \in \C B \), then \( \hat F(f) \) is \( p' \)-cartesian for \( F(g) \).
		 \item 2-cells are pairs of natural transformations \( \hat \eta \colon \hat F \To \hat G, \eta \colon F \To G \) such that 
\[ \begin{tikzcd}
	\C E \ar[d,"p"'] \ar[r,"\hat F"{name = U1}, bend left = 30] \ar[r,"\hat G"{swap, name = U2}, bend right = 30]  & \C E' \ar[d,"p'"] 
	\\[.4em]
	\C B \ar[r,"F"{name = V1}, bend left = 30] \ar[r,"G"{swap, name = V2}, bend right = 30] & \C B'
	\ar[from = U1, to = U2, "\hat\eta", Rightarrow, shorten = .2em]
	\ar[from = V1, to = V2, "\eta", Rightarrow, shorten = .2em]
\end{tikzcd} \]
commutes and such that \( \hat\eta_{e} \) is \( p' \)-cartesian for \( \eta_{p(e)} \).
	\end{itemize}
	The subcategory \( \Fib_{\spl} \) consists of all split fibrations and the 1- and 2-cells that respect those splittings on the nose.
\end{defn}

\begin{rem} 
	Our definition of the 2-cells in \( \Fib \) differs from the definition given in \cite{jacobsCategoricalLogicType1999}, where the requirement that \( \hat\eta_e \) is \( p' \)-cartesian for \( \eta_{p(e)} \) is dropped.
\end{rem}

A well-known fact is then the following:

\begin{thm} 
	There is a 2-equivalence \( \Fib_{\spl} \simeq \mathsf{IC} \).
\end{thm}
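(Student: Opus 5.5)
The plan is to build explicit 2-functors in both directions, the Grothendieck construction \( \Grothendieck{\C C}{} \colon \mathsf{IC} \to \Fib_{\spl} \) and the fibre functor \( \Fib_{\spl} \to \mathsf{IC} \), and then show that both composites are 2-naturally isomorphic to the identities.

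\textbf{Grothendieck construction.} For \( I \colon \C C\opp \to \Cat \), let \( \Grothendieck{\C C}{I} \) have objects \( (c,x) \) with \( x \in I(c) \). A morphism \( (c,x) \to (d,y) \) is a pair \( (u,\alpha) \) with \( u \colon c \to d \) and \( \alpha \colon x \to I(u)(y) \) in \( I(c) \). The projection \( p_I \) is a split fibration whose cleavage consists of the morphisms \( (u,1_{I(u)(y)}) \); strict functoriality of \( I \) makes this cleavage a splitting.

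\begin{itemize}
	\item A 1-cell \( (F,F_{\spacer}) \) is sent to \( (\hat F,F) \) with \( \hat F(c,x) = (Fc, F_c x) \) and \( \hat F(u,\alpha) = (Fu, F_{c}\alpha) \). Naturality of \( F_{\spacer} \) gives \( F_c I(u) = J(Fu) F_d \), so \( \hat F \) sends chosen cartesian lifts to chosen cartesian lifts on the nose.
	\item A 2-cell \( \eta \) is sent to \( \hat\eta_{(c,x)} = (\eta_c, 1) \). This typechecks precisely because \( J(\eta_c) F_c = G_c \), which is the condition \( (1_I\ast\eta)\circ F_{\spacer} = G_{\spacer} \) read componentwise (orienting \( \eta \) correctly). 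The component \( \hat\eta_{(c,x)} \) is a chosen cartesian lift, hence cartesian over \( \eta_c \), as our definition of 2-cells in \( \Fib \) requires.
\end{itemize}

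\textbf{Fibre functor.} In the other direction, send a split fibration \( p \) to \( c \mapsto \C E_c \), with reindexing \( u^* \) given by the splitting; the split condition makes this a strict functor \( \C C\opp \to \Cat \). A split 1-cell \( (\hat F, F) \) restricts to fibre functors \( \C E_c \to \C E'_{Fc} \). These are strictly natural because \( \hat F \) preserves the splitting on the nose.

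A 2-cell \( (\hat\eta,\eta) \) is sent to \( \eta \). The required equation \( J(\eta_c)F_c = G_c \) on objects follows from the fact that \( \hat\eta_e \) is cartesian over \( \eta_c \). To get it on the nose, the component must be the chosen lift, and this is where the precise meaning of "2-cells respecting the splitting on the nose" in \( \Fib_{\spl} \) is needed. I read that clause as requiring \( \hat\eta_e \) to be the chosen cartesian arrow, so that \( \hat\eta \) is completely determined by \( \eta \). This determination is what makes the 2-functor locally fully faithful.

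\textbf{The two composites.}
\begin{itemize}
	\item Starting from \( I \), taking fibres of \( \Grothendieck{\C C}{I} \) recovers \( I \) up to the evident isomorphism \( I(c) \cong (\Grothendieck{\C C}{I})_c \), given by \( x \mapsto (c,x) \) and \( \alpha \mapsto (1,\alpha) \). This is strictly compatible with reindexing. In fact it is an equality after identifying objects, and hence 2-natural.
	\item Starting from a split fibration \( p \colon \C E \to \C C \), define \( \Grothendieck{\C C}{\C E_{\spacer}} \to \C E \) by \( (c,x) \mapsto x \). A morphism \( (u,\alpha) \) is sent to \( \bar u_y \circ \alpha \), where \( \bar u_y \) is the chosen lift. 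This is an isomorphism of categories over \( \C C \) because every morphism factors uniquely as a vertical map followed by a cartesian one. It preserves the splitting, and it is 2-natural in split maps because those preserve the chosen lifts.
\end{itemize}

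\textbf{Main obstacle.} The only delicate point is the 2-cell level: verifying that the two 2-cell notions correspond bijectively. This needs the equivalence between "\( \hat\eta_e \) is the chosen cartesian lift of \( \eta_{p e} \) at \( \hat G e \)" and the equation \( J(\eta_c)\circ F_c = G_c \) (including its compatibility on morphisms, which follows from naturality of \( \hat\eta \)). I will check this carefully, taking into account the paper's modified definition of 2-cells. Everything else is the standard argument, as in \cite{jacobsCategoricalLogicType1999}. Since both unit and counit are isomorphisms, we obtain a 2-equivalence, indeed a 2-isomorphism up to the identification of objects.
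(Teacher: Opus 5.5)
Your route is the standard Grothendieck-construction argument. That is all the paper relies on here, since it states this theorem as a well-known fact without proof. Most of your proposal is correct:
\begin{itemize}
	\item the constructions on objects and 1-cells;
	\item the splitting \( (u,1) \) on \( \int I \);
	\item strict naturality of the fibre functors;
	\item the unit and counit isomorphisms, where uniqueness of the factorisation is with respect to the \emph{chosen} lift \( \bar u_y \);
	\item your reading of 2-cells in \( \Fib_{\spl} \) as having components \( \hat\eta_e = \overline{\eta_{p(e)}}(\hat G e) \). This is forced if the equation is to hold on the nose, and it matches the paper's later use of \( \overline{\eta}^p \).
\end{itemize}

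\textbf{The gap: the 2-cell orientation.} The step that fails as written is exactly the 2-cell bookkeeping you deferred. In the paper, a 2-cell in \( \mathsf{IC} \) is \( \eta\colon F\opp \To G\opp \), so \( \eta_c \colon Gc \to Fc \) in \( \C D \). The condition, with \( 1_I \) read as \( 1_J \), is \( J(\eta_c)\circ F_c = G_c \), which is the equation you use.
\begin{itemize}
	\item From \( \mathsf{IC} \) to fibrations: with this equation, \( (\eta_c,1) \) is a morphism \( (Gc,G_cx) \to (Fc,F_cx) \) in \( \int J \). So your assignment yields a fibred 2-cell \( \hat G \To \hat F \), not \( \hat F \To \hat G \).
	\item From fibrations to \( \mathsf{IC} \): for \( (\hat\eta,\eta)\colon(\hat F,F)\To(\hat G,G) \) one has \( \eta_c\colon Fc\to Gc \). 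Since \( \hat\eta_e \) is the chosen lift, you get \( F_c = J(\eta_c)\circ G_c \). The composite \( J(\eta_c)\circ F_c \) does not even typecheck there.
\end{itemize}
No choice of orientation makes \( \hat\eta\colon \hat F\To\hat G \) compatible with the paper's equation. So, with the definition taken literally, your two 2-functors reverse 2-cells, and what you actually prove is \( \Fib_{\spl}\simeq\mathsf{IC}^{\mathrm{co}} \).

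\textbf{The fix.} To obtain the statement as written, you must explicitly redefine 2-cells in \( \mathsf{IC} \) as \( \eta\colon F\To G \) in \( \C D \) satisfying \( J(\eta_c)\circ G_c = F_c \) for all \( c \), that is, \( (J\ast\eta\opp)\circ G_{\spacer} = F_{\spacer} \). Replace the equation accordingly in both directions. After that correction your argument goes through unchanged.
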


\begin{defn}[Comprehension categories -- \cite{jacobsComprehensionCategoriesSemantics1993}]

	A \emph{comprehension category} consists of a functor \( P \colon \C E \to \C B^\to\) for categories \( \C{E,B} \) (where \( \C B^\to \) is the arrow category) such that
	\begin{enumerate}[leftmargin = 3em]
	\renewcommand{\labelenumi}{(CC\textsubscript{\theenumi})}
	\item The functor \( p := \codom \circ P \colon \C E \to \C B \) is a Grothendieck fibration
		\[ \begin{tikzcd}
			\C E \ar[rr,"P"] \ar[dr,"p"'] 
			&[-1em]&[-1em] 
			\C B^\to \ar[dl,"\codom"] 
			\\
			& \C B
		\end{tikzcd} \]
		
	\item \( P  \) maps arrows cartesian with respect to \( p \) to arrows cartesian with respect to \( \codom \).
	\end{enumerate}
	A comprehension category is \emph{split} if the fibration \( p \) admits a splitting and \emph{discrete} if the fibration \( p \) is discrete.
\end{defn}

\begin{defn}[Pseudo-maps of comprehension categories]
	Let \( P \colon \C E \to \C B^\to \) and \( P' \colon \C E' \to \C B^{\prime\to} \) be comprehension categories. 
	A \emph{pseudo-map} from \( P \) to \( P' \) is given by a map \( (F,G) \colon p \to p' \) of Grothendieck fibrations together with a natural isomorphism \( \phi \colon  P' \circ F \Rightarrow G^\to \circ P \) (where \( G^\to\) is the lift of \( G \) to the arrow categories) lying over the identity transformation of \( F \). 
	This data can be visualised as follows:
	{\[ \begin{tikzcd}
			\C E \ar[drr,"P"] \ar[ddr,"p"] \ar[rrr,"F"] &[-1em]&[-1em]& 
		\C E' \ar[drr,"P'"] \ar[ddr,"p'"] 
		\\
								    && \C B^\to \ar[dl,"\codom"] \ar[rrr,"G^\to"' near start] &&[-1em]&[-1em] \C B^{\prime \to }
		\ar[dl,"\codom"]
		\\
		& \C B \ar[rrr,"G"] & && \C B'
		\ar[to  = 1-4,from = 2-3, equals,"\phi"',"\sim"{sloped},shorten = .5em]
	\end{tikzcd}\quad\hbox{and}\quad
\begin{tikzcd} 
	G\big(P_0(e)\big) \ar[dr,"P(e)"'] \ar[rr,"\phi_e"] &[-2em]&[-2em] P_0'\big(F(e)\big) \ar[dl,"F(P'(e))"]
	\\
							   & G(e).
\end{tikzcd}
\]}
A pseudo-morphism where \( \phi \) is the identity is called a \emph{strict} morphism.
\end{defn}
\begin{defn}[Transformations of pseudo-maps]
	\label{pseudomaptransformationdefn}
	A transformation \( (F,G,\eta) \Rightarrow (F'.G',\eta')  \) consists of a 2-map of fibrations \( (\tilde\alpha,\alpha) \colon (F,G) \to (F',G') \), such that for every \( A \in \C E \) we have
\[
	\eta'_{\lambda} \circ P'(\tilde\alpha_\lambda) = \alpha_{P(\lambda)} \circ \eta_\lambda
\]
for all \( \lambda \in \C E \).
\end{defn}

Sigma-objects were originally introduced in \cite{petrakisCategoriesDependentArrows2023} for fam-categories and then later extended to 2-fam-categories in \cite{ehrhardt2depCategories2024}.
Thus, in order to eschew size issues, we work with functors \( \C C\opp \to \Cat \) from the beginning and introduce Sigma-objects and dep-structures for those. 

\begin{defn}[Sigma-objects]
	Let \( I \colon \C C\opp \to \Cat \) be an indexed category. Then \( I \) has \emph{Sigma-objects}, if
	\begin{enumerate}[leftmargin = 2.5em]
	\renewcommand{\labelenumi}{($\Sigma$\theenumi)}
	\item for any \( c \in \C C \) and any \( \lambda \in I(c) \) there exists an object \( \Grothendieck{c}{\lambda} \) together with a \emph{first-projection-arrow} \( \pr_1^\lambda \colon \Grothendieck{c}{\lambda} \to c \) such that for any \( f \colon c' \to c \) there exists an arrow \( \Grothendieck{\lambda}{f} \colon \Grothendieck{c'}{I(f)(\lambda)} \to \Grothendieck{c}{\lambda} \) such that 
	\[ \begin{tikzcd}[execute at end picture={
			\rueckzug{1-1}{1-2}{2-1}{};
		}]
		\Grothendieck{c'}{I(f)(\lambda)} \ar[r,"\Grothendieck{\lambda}{f}"]  \ar[d,"\pr_1^{I(f)(\lambda)}"'] & \Grothendieck{c}{\lambda} \ar[d,"\pr_1^\lambda"] 
		\\
		c' \ar[r,"f"'] & c
	\end{tikzcd} \]
	is a pullback square.
	Furthermore the following conditions need to be met:
	\begin{enumerate}[leftmargin = 2.75em]
		\renewcommand{\labelenumii}{($\Sigma$\theenumi.\arabic{enumii})}
	\item\emph{Compatibility:} For any \( f\colon c'' \to c', g \colon c' \to c \) the equalities \( \Grothendieck{\lambda}{g \circ f} = \Grothendieck{\lambda}{g} \circ \Grothendieck{I(g)(\lambda)}{f}\) and \( \Grothendieck{\lambda}{1_c} = 1_{\Grothendieck{c}{\lambda}} \) must hold.
	\end{enumerate}
\item For any \( c \in \C C \), \( \lambda,\mu \in I(c) \) and \( \eta \colon \lambda \to \mu \), there exists an arrow \( \Grothendieck{\lambda,\mu}{\eta} \colon \Grothendieck{c}{\lambda} \to \Grothendieck{c}{\mu} \) making the triangle 
	\[ \begin{tikzcd}
		\Grothendieck{c}{\lambda} \ar[rr,"\Grothendieck{\lambda,\mu}{\eta}"] \ar[dr,"\pr_1^{\lambda}"'] 
		&& \Grothendieck{c}{\mu} \ar[dl,"\pr_1^\mu"]
		\\
		& c
	\end{tikzcd} \]
	commutative.
	Furthermore the following conditions must be met:
	\begin{enumerate}[leftmargin = 2.75em]
		\renewcommand{\labelenumii}{($\Sigma$\theenumi.\arabic{enumii})}
	\item for all \( \mu \colon \lambda \to \eta, \nu \colon \eta \to \gamma \) the equality
		\( \Grothendieck{\lambda,\gamma}{\nu \circ \mu} = \Grothendieck{\eta,\gamma}{\nu} \circ \Grothendieck{\lambda,\eta}{\mu} \) holds.
	\item for all \( \lambda \) the equality \( \Grothendieck{\lambda,\lambda}{1_\lambda} = 1_{\Grothendieck{c}{\lambda}} \) holds.
	\end{enumerate}
	\end{enumerate}
\end{defn}

\begin{exms}\leavevmode 
	\begin{enumerate}\label{exm::typecategories}
	\renewcommand{\labelenumi}{\theenumi.}
	\item Any type category (see~\cite{pittsCategoricalLogic2001}) is an indexed category with Sigma-objects by defining \( I \colon \C C\opp \to \Cat \) via \( I(c) := \mathit{Type}_\C C(c) \) and \( I(f)(\lambda) := f^*(\lambda) \).
		The Sigma object of \( c \) and \( \lambda \in \mathit{Type}_\C C(c) \) is defined via \( \Grothendieck{c}{\lambda} := c \rtimes \lambda \) and \( \Grothendieck{\lambda}{f} := f \rtimes \lambda \).
	\item\label{exm::ring1} Any ring \( R \) gives rise to an indexed category with Sigma-objects in the following way (see~\cite{ehrhardt2depCategories2024}): The underlying category \( \C C \) is the category given by the additive group \( (R,+) \), and the functor \(I \colon \C C\opp \to \Cat  \) maps \( \ast \) to the category whose objects are the elements of \( R \times R \), and morphisms \( g\colon (a,b) \to (c,d) \) are set functions---\emph{not} ring homomorphisms---\( \eta \colon R \to R \) such that 
	 \( \eta(a-b) = c-d \).
	 The functors \( I(r) \) for \( r \colon \ast \to \ast \) are then defined via \( I(f)(a,b) = (a+r,b+r) \).
	 
	 The Sigma-objects are given via \( \Grothendieck{\ast}{(a,b)} := \ast \) and \( \pr_1^{(a,b)} := a \cdot b	 \).
	 Furthermore \( \Grothendieck{(a,b)}{r} := r \cdot (1 + r + a + b) \) and \( \Grothendieck{(a,b),(c,d)}{\mu} := a\cdot b - c \cdot d \).
	\end{enumerate}
\end{exms}

The following lemma is then straightforward to prove.

\begin{lem}\label{lem: equivSigmaComprehension}
There exists a 2-equivalence 
\[
	\Sigma-\mathsf{IC} \simeq \CompCat_{\spl}^{\str},
\]
where \( \CompCat_{\spl}^{\str} \) is the 2-category of split comprehension categories, strict morphisms and strict transformations (see~\cite[Definitions~1--4]{ahrensComparingSemanticFrameworks2025a}).
\end{lem}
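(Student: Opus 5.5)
We build on the 2-equivalence \( \Fib_{\spl} \simeq \mathsf{IC} \) (Grothendieck construction) and show that Sigma-objects on \( I \) correspond exactly to the extra comprehension data \( P \colon \C E \to \C B^\to \) on the split fibration \( p_I \colon \int I \to \C C \). The 2-category \( \Sigma\text{-}\mathsf{IC} \) is taken to have maps \( (F,F_{\spacer}) \) of indexed categories that strictly preserve the Sigma-structure. That is, \( F(\Grothendieck{c}{\lambda}) = \Grothendieck{Fc}{F_c\lambda} \), with \( F \) preserving \( \pr_1 \), the arrows \( \Grothendieck{\lambda}{f} \) and the arrows \( \Grothendieck{\lambda,\mu}{\eta} \). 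Its 2-cells are the transformations \( \eta \) compatible with these data.

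\emph{Step 1 (objects).} Given \( I \) with Sigma-objects, take \( p \colon \int I \to \C C \) with its canonical splitting, where objects are pairs \( (c,\lambda) \) and arrows are \( (f,\alpha) \colon (c',\lambda') \to (c,\lambda) \) with \( \alpha \colon \lambda' \to I(f)(\lambda) \). Define
\[
P(c,\lambda) := \pr_1^\lambda, \qquad P(f,\alpha) := \Bigl(\Grothendieck{\lambda}{f} \circ \Grothendieck{\lambda',I(f)\lambda}{\alpha},\; f\Bigr).
\]
Functoriality follows from the compatibility conditions (\(\Sigma\)1.1), (\(\Sigma\)2.1) and (\(\Sigma\)2.2), together with a naturality identity \( \Grothendieck{\lambda}{f}\circ\Grothendieck{}{I(f)\beta} = \Grothendieck{}{\beta}\circ \Grothendieck{\lambda}{f} \) for \( \beta \colon \lambda \to \mu \). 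This identity is not literally among the axioms. I would derive it from the pullback property of \( \Grothendieck{\mu}{f} \): both composites agree after postcomposing with \( \pr_1^\mu \) and with \( \Grothendieck{\mu}{f} \). The first uses the triangle in (\(\Sigma\)2); the second is the problematic part, because it needs exactly the naturality we are trying to prove. This is the main obstacle. If the uniqueness argument does not close, I would either add naturality to the definition of Sigma-objects, which is presumably intended, or rephrase \( \Sigma\text{-}\mathsf{IC} \) so that (\(\Sigma\)2) is natural in \( f \). Cartesian arrows \( (f,1) \) are sent to the pullback squares of (\(\Sigma\)1), which are \( \codom \)-cartesian, so (CC\(_2\)) holds.

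\emph{Step 2 (inverse).} Conversely, given a split comprehension category \( P \colon \C E \to \C B^\to \), let \( I \) be the split indexed category of \( p \) and set
\[
\Grothendieck{c}{\lambda} := \dom P(\lambda), \qquad \pr_1^\lambda := P(\lambda).
\]
Define \( \Grothendieck{\lambda}{f} \) as the top component of \( P \) applied to the chosen cartesian lift \( \overline f \colon f^*\lambda \to \lambda \). Define \( \Grothendieck{\lambda,\mu}{\eta} \) as the top component of \( P(\eta) \) for vertical \( \eta \). Pullback squares come from (CC\(_2\)). The compatibility equalities hold because the splitting is strict and \( P \) is a functor.

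\emph{Step 3 (morphisms and 2-cells).} On 1-cells, a strict morphism \( (\hat F, F) \) with \( \phi = \id \) corresponds exactly to a Sigma-preserving map of indexed categories. Here \( \hat F \) is determined on \( \int I \) by \( (F,F_{\spacer}) \), and the equation \( P' \hat F = F^\to P \) is precisely the preservation of \( \Grothendieck{}{} \), \( \pr_1 \) and the structure arrows. On 2-cells, the condition of Definition~\ref{pseudomaptransformationdefn} with \( \eta = \eta' = \id \) is exactly compatibility with \( \pr_1 \) and the Sigma data.

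\emph{Step 4 (equivalence).} The two constructions are inverse up to the canonical isomorphisms \( I \cong I_{\int I} \) and \( p \cong p_{I_p} \) already provided by \( \Fib_{\spl}\simeq\mathsf{IC} \). Transporting \( P \) along these isomorphisms gives strict isomorphisms in \( \CompCat_{\spl}^{\str} \), so we obtain a 2-equivalence.
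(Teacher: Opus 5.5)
The paper gives no proof of this lemma; it only calls it straightforward. Your route is the right one: the Grothendieck construction, $P(c,\lambda)=\pr_1^\lambda$, and the inverse read off from top components of $P$. You have also located the one real obstruction.

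\textbf{The naturality identity is not derivable.} Your hedge has to be resolved negatively. The identity
\[ \Grothendieck{\mu}{f}\circ\Grothendieck{I(f)(\lambda),I(f)(\mu)}{I(f)(\beta)}=\Grothendieck{\lambda,\mu}{\beta}\circ\Grothendieck{\lambda}{f} \]
does \emph{not} follow from ($\Sigma$1)--($\Sigma$2). (On the left it is $\Grothendieck{\mu}{f}$, not $\Grothendieck{\lambda}{f}$.) Your uniqueness argument cannot close, because both sides land in $\Grothendieck{c}{\mu}$, which is not the vertex of the pullback. The pullback property of $\Grothendieck{\mu}{f}$ only reduces the identity to the claim that $\Grothendieck{I(f)(\lambda),I(f)(\mu)}{I(f)(\beta)}$ equals the map induced by $\big(\pr_1^{I(f)(\lambda)},\Grothendieck{\lambda,\mu}{\beta}\circ\Grothendieck{\lambda}{f}\big)$. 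But ($\Sigma$2) asks only for \emph{some} functorial choice over $c'$.

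Here is a counterexample.
\begin{itemize}
\item Take $\C C=\Sets$ and $I(c)=\Sets^{c}$, with reindexing by precomposition and the disjoint-union Sigma-objects.
\item Over the object $1$ only, replace $\Grothendieck{\lambda,\mu}{\beta}$ by $\theta_\mu\circ\Grothendieck{\lambda,\mu}{\beta}\circ\theta_\lambda^{-1}$. Here $\theta_{\lambda_0}$ is the swap of $\Grothendieck{1}{\lambda_0}\cong\{0,1\}$ for the constant family $\lambda_0=\{0,1\}$, and $\theta_\lambda$ is the identity for every other $\lambda$.
\item Then ($\Sigma$1)--($\Sigma$2) still hold.
\item For $f\colon 2\to 1$ and $\beta$ the constant endomap of $\lambda_0$ at $0$, the two sides of the identity are the constant maps to $0$ and to $1$.
\end{itemize}

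\textbf{Naturality is forced by any comprehension functor.} In $\int I$ one has $(1_c,\beta)\circ(f,1)=(f,I(f)(\beta))=(f,1)\circ(1_{c'},I(f)(\beta))$. So any functor $P$ whose values on $(f,1)$ and $(1_c,\beta)$ have top components $\Grothendieck{\lambda}{f}$ and $\Grothendieck{\lambda,\mu}{\beta}$ forces the identity. Hence every split comprehension category yields natural Sigma-objects, and the example above lies outside the image of your Step 2. A short transport argument along the invertible 2-cells shows it is not even equivalent to anything in that image. The lemma therefore needs naturality added to ($\Sigma$2) as an axiom, which the paper tacitly assumes. In the ring example it holds automatically, because the base is a groupoid and so $\pr_1^\mu$ is monic.

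\textbf{With that axiom, your Steps 1--4 go through, after three small fixes.}
\begin{itemize}
\item Step 2 must now verify the new axiom. This follows by applying $P$ to $\overline{f}(\mu)\circ f^*(\beta)=\beta\circ\overline{f}(\lambda)$ and taking top components.
\item (CC$_2$) concerns all cartesian arrows $(f,\alpha)$ with $\alpha$ invertible, not just $(f,1)$. Use $P(f,\alpha)=P(f,1)\circ P(1,\alpha)$, where $P(1,\alpha)$ is invertible.
\item Step 3 requires the 1- and 2-cells of $\CompCat_{\spl}^{\str}$ to preserve the chosen lifts on the nose, as in $\Fib_{\spl}$, so that they match strict $F_{\spacer}$ and strict 2-cells. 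The 2-cell condition then reads $\alpha_{\Grothendieck{c}{\lambda}}=\Grothendieck{G_c(\lambda)}{\alpha_c}$.
\end{itemize}
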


\chapter{Dependent arrow structures}
\label{sec::deparr}

\newcounter{depcounter}
\begin{defn}[Dependent arrows]\label{def: depstruc1}
	Let \( I \colon \C C\opp \to \Cat \) be a indexed category with Sigma-objects. 
	A \emph{dependent arrow structure} (or equivalently \emph{dep-structure}) on \( I \) consists of a set \( \dHom(c,\lambda) \), for every \( c \in \C C, \lambda \in I(c) \), whose elements are called \emph{dep-arrows},  together with
	\begin{enumerate}[leftmargin = 2.5em,itemindent = 1em]
	\renewcommand{\labelenumi}{(dep-\theenumi)}
	\item a precomposition operation that assigns to each \( f \colon c' \to c \) and each \( \Phi \in \dHom(c,\lambda) \) a \( \Phi \circ f \in \dHom\big(c',I(f)(\lambda)\big) \).
		This operation must fulfil the following conditions:
		\begin{enumerate}[leftmargin = 2.5em,itemindent=1em]
			\renewcommand{\labelenumii}{(dep-\theenumi.\arabic{enumii})}
		\item for all \( f \colon c'' \to c', g \colon c' \to c \) the equality \( (\Phi \circ g) \circ f = \Phi \circ (g \circ f) \) must hold.
		\item For all \( c \) the equality \( \Phi \circ 1_c = \Phi \) must hold.
		\end{enumerate}
	\item A postcomposition operation that assigns to each \( \eta \colon \lambda \to \mu \) and each \( \Phi \in \dHom(c,\lambda) \) a \( \eta \circ \Phi \in \dHom(c,\mu) \).
		This operation must fulfil the following conditions:
		\begin{enumerate}[leftmargin = 2.5em, itemindent = 1em]
			\renewcommand{\labelenumii}{(dep-\theenumi.\arabic{enumii})}
		\item for all \( \eta \colon \lambda \to \mu, \theta \colon \mu \to \nu \) the equality \( (\theta \circ \eta) \circ \Phi = \theta \circ (\eta \circ \Phi) \) must hold.
		\item For all \( \lambda \) the equality \( 1_\lambda \circ \Phi = \Phi \) must hold.
			\setcounter{depcounter}{\value{enumi}}
		\item For all \( \eta \colon \lambda \to \mu, f \colon c' \to c \) the equality
		 \( (\eta \circ \Phi) \circ f = I(f)(\eta) \circ (\Phi \circ f) \) must hold.
		\end{enumerate}
	\end{enumerate}
	We call an indexed category with a dependent arrow structure a \emph{dep-category}.
\end{defn}

\begin{exms} 
	\begin{enumerate}\label{exm::indexedcategories}
	\renewcommand{\labelenumi}{\theenumi.}
	\item Every indexed category can be endowed with a dep-structure by setting \( \dHom(c,\lambda) = \{\ast\} \) for every \( \lambda \), the compositions are then trivial.
	\item\label{exm::ring2} The indexed category stemming from a ring \( R \) can be endowed with the following dep-arrow structure (see~\cite{ehrhardt2depCategories2024}):
		for each \( (a,b) \), the set \( \dHom\big(\ast,(a,b)\big) \) is defined to be 
		\[
			\{ \mathfrak{p} \in \mathsf{Ideal}(R) ~\vert~ a-b \in \mathfrak{p}\}.
		\]
		The precomposition is defined via \( \mathfrak{p} \circ r = \mathfrak{p} \).
		The postcomposition is defined via \( \eta \circ \mathfrak{p} := \langle\eta(\mathfrak{p})\rangle \), the smallest ideal containing \( \eta(\mathfrak{p}) \).
	\end{enumerate}
\end{exms}
A standard result in fibred category theory is, that indexed categories are biequivalent to split fibrations. 
It remains to examine to which additional structure the dep-structures correspond. 
We will show that they correspond to adding a \emph{discrete ambifibration} on top of the split fibrations, which together constitute a sas-tower, a terminology we will introduce and examine in the remainder of this section.
Thus the resulting picture is the following:
\[
	F\colon \C C\opp \to \Cat\textcolor{red}{+ \text{ dep-structure }} \leftrightarrow 
	\begin{tikzcd} 
		|[color = red]| 
	\C D \ar[d,"Q", red]	\\
	\C E \ar[d,"p"] \\
				 \C C\end{tikzcd}
\]
\emph{Discrete ambifibration} are a special version of ambifibrations, a term coined by Sattler in~\cite{sattlerKocksFat$Delta$2017}.

\begin{defn}[(discrete) ambifibrations]
	Let \( \C C \) be a category with orthogonal factorisation system \( (\mathcal{L,R}) \). 
	A functor \( p \colon \C{B \to C} \) is an \emph{ambifibration} if both \( p_\mathcal{L},p_\mathcal{R} \), defined through the pullback squares
	\[
		\begin{tikzcd}[execute at end picture={
		\rueckzug{1-1}{1-2}{2-1}{};
	}] 
			\C E_\mathcal{L} \ar[d,"p_\mathcal{L}"'] \ar[r] 
			& \C B \ar[d,"p"]
			\\
			\mathcal{L} \ar[r] & \C C
		\end{tikzcd}
		\quad\text{ and }\quad
\begin{tikzcd}[execute at end picture={
		\rueckzug{1-1}{1-2}{2-1}{};
	}]
			\C E_\mathcal{R} \ar[d,"p_\mathcal{R}"'] \ar[r] 
			& \C B \ar[d,"p"]
			\\
			\mathcal{R} \ar[r] & \C C
		\end{tikzcd}
	\]
	respectively, are opfibrations and fibrations, respectively.
	If they are a discrete opfibration and a fibration, respectively, we call \( I \) a \emph{discrete ambifibration}.
\end{defn}
We first show that the choice of pullback is not relevant for this definition, that is if we are given two pullbacks 
\begin{equation}
	\begin{tikzcd}[execute at end picture={
			\rueckzug{1-1}{1-2}{2-1}{};
		}]
		\C E_\mathcal{L}  \ar[r] \ar[d,"p_\mathcal{L}"'] & \C E \ar[d,"p"] 
		\\
		\mathcal{L} \ar[r] & \C B
	\end{tikzcd}
	\text{ and }
	\begin{tikzcd}[execute at end picture={
			\rueckzug{1-1}{1-2}{2-1}{};
		}]
		\C E_\mathcal{L}'  \ar[r] \ar[d,"p_\mathcal{L}'"'] & \C E \ar[d,"p"]
		\\
		\mathcal{L} \ar[r] & \C B
	\end{tikzcd}
	\label{AmbiFibPullEq::1}
\end{equation}
(similarly for \( p_\mathcal{R} \) in place of \( p_\mathcal{L} \)) then one being an opfibration implies the other being an opfibration, and vice versa. Explicitly:

\begin{lem} 
	In the situation of \eqref{AmbiFibPullEq::1} the following are equivalent:
	\begin{enumerate}
	\renewcommand{\labelenumi}{\theenumi.}
	\item\label{AmbiFibLem1::1} \( p_\mathcal{L} \) is a (split) opfibration.
	\item\label{AmbiFibLem1::2} \( p_\mathcal{L}' \) is a (split) opfibration.
	\end{enumerate}
\end{lem}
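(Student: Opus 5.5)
By symmetry it suffices to show that the first statement implies the second. The idea is to use the universal property of the pullback to compare the two pullbacks in \eqref{AmbiFibPullEq::1}, and then to transport the opfibration structure along the comparison. Since both squares are pullbacks in \( \Cat \) of the same cospan \( \mathcal{L} \to \C B \leftarrow \C E \), there is a canonical comparison functor \( K \colon \C E_\mathcal{L}' \to \C E_\mathcal{L} \). It commutes with the projections to \( \mathcal{L} \) and to \( \C E \), so in particular \( p_\mathcal{L} \circ K = p_\mathcal{L}' \). The same universal property in the other direction gives an inverse comparison functor, so \( K \) is an isomorphism of categories over \( \mathcal{L} \).

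Next I show that being a (split) opfibration is invariant under isomorphism over the base. Suppose \( p_\mathcal{L} \) is an opfibration, \( x \in \C E_\mathcal{L}' \) and \( u \colon p_\mathcal{L}'(x) \to l \) is an arrow in \( \mathcal{L} \). Since \( p_\mathcal{L}(Kx) = p_\mathcal{L}'(x) \), we may choose a \( p_\mathcal{L} \)-opcartesian lift \( \bar u \colon Kx \to y \). We then set \( K^{-1}(\bar u) \colon x \to K^{-1}y \). This arrow lies over \( u \) because \( p_\mathcal{L}' \circ K^{-1} = p_\mathcal{L} \). It is opcartesian because \( K \) is a bijection on hom-sets that commutes with the projections, so every factorisation problem in \( \C E_\mathcal{L}' \) translates under \( K \) into one in \( \C E_\mathcal{L} \) and back. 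In the split case the chosen lifts \( K^{-1}(\bar u) \) inherit the functoriality equations, namely identity lifts to identity and composites lift to composites, directly from those of \( p_\mathcal{L} \), because \( K^{-1} \) is a functor. For discreteness, which is the variant used in the definition of discrete ambifibration, fibres correspond bijectively under \( K \), so discreteness transfers as well.

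The same argument, with opcartesian replaced by cartesian, handles \( p_\mathcal{R} \). The only point that needs care is the first step: pullbacks in \( \Cat \) are unique up to isomorphism, not merely up to equivalence, so \( K \) is an isomorphism commuting strictly with the legs. This strictness is what makes the split and discrete variants transfer, since a mere equivalence would not preserve splittings on the nose. The remaining verifications are routine.
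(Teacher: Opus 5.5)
Your proof is correct and follows essentially the paper's route: both arguments rest on the canonical comparison isomorphism between the two pullbacks, which the paper places in a pasted square (with the isomorphism on top and $\id_\mathcal{L}$ below), observes that this square is again a pullback, and then invokes stability of opfibrations under pullback. Since that pullback is along $\id_\mathcal{L}$, it amounts to the invariance under isomorphism over $\mathcal{L}$ that you verify by hand, and your direct check has the small advantage of making the split and discrete variants, and the reliance on strict rather than pseudo-pullbacks, explicit.
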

\begin{prf}
	We can build the pasted diagram
	\[ \begin{tikzcd}[execute at end picture={
			\rueckzug{1-1}{1-2}{2-1}{};
		}]
		\C E_\mathcal{L} \ar[r,"i","\cong"'] \ar[d,"p_\mathcal{L}"'] & \C E_\mathcal{L}' \ar[d,"p_\mathcal{L}'"] \ar[r] & \C E \ar[d,"p"]
		\\
		\mathcal{L} \ar[r,"\id_\mathcal{L}","\cong"'] & \mathcal{L} \ar[r] & \C B
	\end{tikzcd} \]
	 where the outher and the right square are pullbacks and thus the left is as well.
	 As opfibrations are stable under pullbacks \( p_\mathcal{L} \) has to be an opfibration if \( p_\mathcal{L}' \) is. 
	 For the reverse direction we observe that both \( i  \) and \( \id_\mathcal{L}  \) are isomorphisms, so we can exchange \( \C E_\mathcal{L} \) and \( \C E_\mathcal{L}' \).
\end{prf}

Naturally this extends to a dual result for \( p_\mathcal{R}\) which we will not state explicitly here.
These two results allow us to always check the ambifibration property through the categories \( \C E_\mathcal{L},\C E_\mathcal{R} \), explicitly defined as follows:
\begin{itemize}
	\item They both share the same objects as \( \C E \).
	\item The arrows in \( \C E_\mathcal{L} \) are those arrows \( f  \) in \( \C E \) such that \( p(f) \in \mathcal{L} \), similarly those in \( \C E_\mathcal{R} \) are those projected to an arrow in \( \mathcal{R} \) under \( p \).
\end{itemize}
\begin{defn}[Sas-towers]\label{sastowerdef}
	\index{Sas-tower|hyperit}
	\index{Tower!sas-\rule{1em}{.4pt}|see{Sas-tower}}
	We define a \emph{sas-tower} to consist of two functors \( Q \colon \C G \to \C E, p \colon \C E \to \C B \) such that 
	\begin{enumerate}[itemindent = 2em]
		\renewcommand{\labelenumi}{(sas-{\theenumi})}
		\newcounter{sadcounter}
	\item $p$ is a split Grothendieck fibration,
	\item\label{sastowerdef::item2} \( Q \) is a discrete ambifibration with respect to the orthogonal factorisation system \( (\mathcal{V,C}) \) of morphisms vertical/cartesian with respect to \( p \) and
	\item\label{sastowerdef::item3}  \(q:=  p \circ Q \) is a split Grothendieck fibration such that for every \( f \colon c \to c' \) in \( \C B \), every \( e \in \C E \) with \( p(e) = c' \) and every \( \Phi \in \C G \) with \( Q(\Phi) = e \) we have
		\[
			\overline{f}^{q}(\Phi) = \overline{ \overline{f}^p(e)}^{Q}(\Phi).
		\]
	\item\label{sastowerdef::item4}For any \( f \colon c \to c' \) in \( \C B \), \( g \colon e \to e' \) in \( \C E \) with \( p(g) = 1_{c'}  \) and any \( \Phi \in \C G \) with \( Q(\Phi) = e \) we have 
		\[
			{f}^{*q}\big({g}_{*Q}(\Phi)) = (g \bullet f)_{*Q}({f}^{*q}(\Phi)),
		\]
		where \( g \bullet f \) is defined through the universal property of the cartesian lifts of \( f \) as in 
		\[ \begin{tikzcd}
			f^*(e) \ar[d,dotted,"g \bullet f"'] \ar[r,"\overline{f}^p(e)"] & e \ar[d,"g"] 
			\\
			f^*(e') \ar[r,"\overline{f}^p(e')"] & e.
		\end{tikzcd} \]
		\setcounter{sadcounter}{\value{enumi}}
	\end{enumerate}
\end{defn}

\begin{lem}\label{2depAmbiLem::1}
	We can associate to any dep-category \( I \colon \C C\opp \to \Cat \) a sas-tower in the following way:
	\begin{itemize}
		\item The categories \( \C{E,B}\) and the split fibration \( p \colon \C E \to \C B \) are defined as in the translation between indexed categories and split fibrations. 
		\item The category \( \C G \) has
			\begin{itemize}
				\item as objects dep-arrows \( \Phi \) and
				\item a morphism \( (f,\eta) \colon \Phi \to \Psi \) consists of an arrow \( (f,\eta) \colon (c,\lambda) \to (c',\mu) \) in \( \C E \) such that  \( \Phi \in \dHom(c,\lambda), \Psi \in \dHom(c',\mu) \) and 
					\[
						\eta \circ \Phi = \Psi\circ f.
					\]
				\end{itemize} 
		\item The functor \( Q \colon \C G \to \C E \) takes each \( \Phi \) to the pair \( (c,\lambda) \) such that \( \Phi \in \dHom(c,\lambda) \) and \( (f,\eta) \colon \Phi \to \Psi \) to \( (f,\eta) \).
	\end{itemize}
\end{lem}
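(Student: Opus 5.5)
Throughout, I use the Grothendieck construction \( \C E = \int I \), with objects \( (c,\lambda) \), \( \lambda\in I(c) \), and arrows \( (f,\eta)\colon (c,\lambda)\to(c',\mu) \) given by \( f\colon c\to c' \) and \( \eta\colon \lambda\to I(f)(\mu) \). Here \( \eta\circ\Phi=\Psi\circ f \) is read in \( \dHom\big(c,I(f)(\mu)\big) \). The proof is a direct verification of (sas-1)–(sas-4) for the data in the statement.

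First I check that \( \C G \) is a category and \( Q \) a functor. The identity \( (1_c,1_\lambda) \) is a morphism \( \Phi\to\Phi \) by (dep-1.2) and (dep-2.2). Given \( (f,\eta)\colon\Phi\to\Psi \) and \( (g,\theta)\colon\Psi\to\Xi \), the composite in \( \C E \) is \( (g\circ f, I(f)(\theta)\circ\eta) \). Using (dep-2.1), then the hypothesis, then (dep-2.3), then the hypothesis on \( \theta \), then (dep-1.1), we get
\[
(I(f)(\theta)\circ\eta)\circ\Phi = I(f)(\theta)\circ(\Psi\circ f) = (\theta\circ\Psi)\circ f=(\Xi\circ g)\circ f = \Xi\circ(g\circ f).
\]
So composites stay in \( \C G \). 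Functoriality of \( Q \) is then tautological, and (sas-1) is the standard fact recalled before Lemma~\ref{lem: equivSigmaComprehension}.

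For (sas-2), the vertical arrows are those \( (1_c,\eta) \) and the cartesian arrows are those \( (f,\iota) \) with \( \iota \) an isomorphism. Since the choice of pullback is irrelevant by the preceding lemma, I work with \( \C G_\mathcal{V} \) and \( \C G_\mathcal{C} \) explicitly.
\begin{itemize}
\item \textbf{Vertical part.} Given \( \Phi\in\dHom(c,\lambda) \) and \( \eta\colon\lambda\to\mu \), the only lift with domain \( \Phi \) is \( \eta\circ\Phi \). Any morphism \( (1_c,\eta)\colon\Phi\to\Psi \) forces \( \Psi=\Psi\circ 1_c=\eta\circ\Phi \). This gives a discrete opfibration.
\item \textbf{Cartesian part.} Given \( \Psi\in\dHom(c',\mu) \) and \( (f,\iota)\colon(c,\lambda)\to(c',\mu) \), I take as lift \( \Phi:=\iota^{-1}\circ(\Psi\circ f) \). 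Uniqueness of factorisations follows because \( Q \) is faithful and the defining equation is determined on objects. It is also discrete, which in particular makes it a fibration.
\end{itemize}
For the split case, with chosen cleavage \( \iota=1 \), this gives \( f^{*}\Psi=\Psi\circ f \).

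For (sas-3), I show \( q=p\circ Q \) is a split fibration with chosen lift \( (f,1)\colon\Psi\circ f\to\Psi \). Cartesianness is checked by factoring through the cartesian lift in \( \C E \), using the fibration property of \( Q_\mathcal{C} \). Splitness comes from (dep-1.1) and (dep-1.2). By construction this lift coincides with the \( Q \)-lift of \( \overline f^{p}(e) \), which is the required equation.

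The main obstacle is (sas-4), chiefly in unwinding the notation. Here \( g=(1,\eta) \), so \( g_{*Q}(\Phi)=\eta\circ\Phi \) and \( f^{*q}(\eta\circ\Phi)=(\eta\circ\Phi)\circ f \). On the other side, \( g\bullet f=(1,I(f)(\eta)) \), so the right-hand side is \( I(f)(\eta)\circ(\Phi\circ f) \). The two agree by exactly (dep-2.3).

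The hard part will be pinning down the conventions, namely the direction of \( I(f) \) in the Grothendieck construction and which cleavage is chosen. I would fix these first so that the equality in (sas-4) holds on the nose rather than only up to isomorphism.
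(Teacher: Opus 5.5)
Your route is the same as the paper's: a direct check of (sas-1)--(sas-4) using the same chosen lifts, namely \( (f,1)\colon\Psi\circ f\to\Psi \) for \( q \) and \( \iota^{-1}\circ(\Psi\circ f) \) on the cartesian side, with (dep-2.3) settling (sas-4). Your extra check that \( \C G \) is closed under composition is a welcome addition, since the paper omits it. There is, however, a gap in (sas-2).

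\textbf{The gap in (sas-2).} You take \( \mathcal V \) to be the arrows of the form \( (1_c,\eta) \). But (sas-2) refers to the \emph{orthogonal factorisation system} of vertical/cartesian maps, and the left class of such a system must contain every isomorphism of \( \C E \). For example, \( (f,1) \) with \( f \) a non-identity isomorphism is an isomorphism of \( \C E \) (its inverse is \( (f^{-1},1) \)), yet it is not of the form \( (1_c,\eta) \). With your classes, \( \mathcal V\cap\mathcal C \) consists only of the vertical isomorphisms \( (1_c,\iota) \), so your pair is not an orthogonal factorisation system. The correct class, the one used in the paper's proof, is \( \mathcal V=\{(f,\eta)\mid f\text{ invertible}\} \). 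Your discrete-opfibration argument therefore covers only part of \( Q_{\mathcal V} \).

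The missing case is easy to supply. For \( (f,\eta) \) with \( f \) invertible, take the lift at \( \Phi \) to be \( (f,\eta)\colon\Phi\to(\eta\circ\Phi)\circ f^{-1} \).
\begin{itemize}
\item It is a morphism of \( \C G \) by (dep-1.1) and (dep-1.2).
\item It is unique, because any \( \Psi \) with \( \Psi\circ f=\eta\circ\Phi \) satisfies \( \Psi=(\Psi\circ f)\circ f^{-1}=(\eta\circ\Phi)\circ f^{-1} \).
\end{itemize}
Alternatively, factor \( (f,\eta)=(f,1)\circ(1,\eta) \) and use that the discrete fibration \( Q_{\mathcal C} \) lifts the isomorphism \( (f,1) \) uniquely in both directions.

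\textbf{A smaller point in (sas-3).} The fibration property of \( Q_{\mathcal C} \) alone does not give cartesianness of \( (f,1)\colon\Psi\circ f\to\Psi \). In the factorisation \( (g,\theta)=(f,1)\circ(h,\theta) \) in \( \C E \), the arrow \( (h,\theta) \) is typically not cartesian, so \( Q_{\mathcal C} \) says nothing about it. What you actually need is the one-line check that \( (h,\theta)\colon\Xi\to\Psi\circ f \) lies in \( \C G \), namely \( \theta\circ\Xi=\Psi\circ(f\circ h)=(\Psi\circ f)\circ h \) by (dep-1.1). Uniqueness then follows from faithfulness of \( Q \).
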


\begin{prf} 
	To see that \( q \colon \C G \to \C B \) is a split fibration, we observe that defining \( \overline{f}(\Phi):= (f,1_\lambda) \colon \Phi \circ f \to \Phi \) yields a splitting---the proof that \( (f,1_\lambda) \) is cartesian for \( f \) and \( \Phi \) is analogous to the proof that it is cartesian for \( f \) and \( \lambda \)---and by definition of a  dep-category  this cleavage fulfils functoriality, thus \( q \) is a split fibration.
We know that the orthogonal factorisation system on \( \C E \) in vertical and cartesian morphisms looks as follows:
	\begin{align*} 
		\mathcal{V} &= \{ (f,\eta) ~\vert~ f \text{  is an isomorphism}\},
		\\
		\mathcal{C} &= \{ (f,\eta) ~\vert~ \eta \text{ is an isomorphism}\}. 
	\end{align*}
	Thus we need to exhibit the discrete lifting (oplifting) property for \( Q_{\mathcal{C}} \) (respective \( Q_{\mathcal{V}} \)).
	Given an arrow in \( \mathcal{V} \), that is \( (f,\eta) \colon (c,\lambda) \to (c',\mu) \)--- with \( f  \) invertible---and \( \Phi \in \dHom(\lambda)\) the lift of \( (f,\eta) \) along \( \Phi \) is defined to be \( (f,\eta) \colon \Phi \to (\eta \circ \Phi)\circ f^{-1} \).
	This is an arrow in \( \C G \) as
	\[
		\big((\eta \circ \Phi)\circ f^{-1}\big)\circ f = (\eta \circ \Phi)\circ (f^{-1} \circ f) = \eta \circ \Phi.
	\]
	It is the necessarily unique lift of \( (f,\eta) \) with respect to \( Q_\mathcal{V} \).

	Similarly, given an arrow in \( \mathcal{C} \), that is \( (f,\eta) \colon (c,\lambda) \to (c',\mu) \)---with \( \eta \) invertible---and \( \Phi \in \dHom(c',\mu) \) the lift of \( (f,\eta)  \) along \( \Phi \) is defined to be \( (f,\eta) \colon \eta^{-1} \circ( \Phi\circ f) \to \Phi \). 
	This is a well-defined arrow as
	\[
	\eta \circ \big( \eta^{-1} \circ (\Phi \circ f)\big) = (\eta \circ \eta^{-1}) \circ (\Phi \circ f) = \Phi\circ f.
	\]
	It is also the necessarily unique lift of \( (f,\eta) \) with respect to \( Q_\mathcal{C} \).
	
	It remains to check the equalities imposed upon the splitting. 
	We compute that
	\[
		\overline{f}^{q}(\Phi) = \big((f,1_\lambda)\colon\Phi \circ f \to \Phi\big) = \overline{(f,1_\lambda)}^Q(\Phi) = \overline{\overline{f}^p(e)}^Q(\Phi).
	\]
	Similarly,  for \( f \colon c' \to c, \eta \colon \lambda \to \mu \) with \( p(\eta) = 1_c \) and \( \Phi \) with \( q(\Phi) = c' \) we calculate 
	\begin{align*}
		& {f}^{*q}\big( (1_c,\eta)_{*Q}(\Phi)\big) =  {f}^{*q}\big(\eta \circ \Phi)\big) =  (\eta \circ \Phi)\circ f \text{ and }
		\\
		&{(1_c,\eta) \bullet f}_{*Q}\big(f^{*q}(\Phi\big) = {(1_{c'},\eta\bullet f)}_{*Q}(\Phi \circ f) = (\eta \bullet f) \circ (\Phi\circ f).
	\end{align*}
	As \( (\eta\circ \Phi)\circ f = I(f)(\eta) \circ (\Phi \circ f) \) in any indexed category with a dep-structure this proves the required equality.
\end{prf}

Conversely, every sas-tower gives rise to an indexed category with a dep-structure.

\begin{lem}\label{sasto2dep} 
	Let \( (Q \colon \C G \to \C E, p \colon \C E \to \C B)\) be a sas-tower. 
	Then we obtain an associated indexed category \( I \colon \C C\opp \to \Cat \) with dep-structure defined by:	\begin{itemize}
		\item the dependent arrows over \( e \in \C E \) are defined as \( Q^{-1}(e) \).
		\item 
	The compositions of \( f \colon c' \to c, e \in \fHom(c), g\colon e' \to e \in \fHom(c), \Phi \in \dHom(e) \) are defined as 
	\begin{align*}	
		\Phi \circ f &:= f^*(\Phi), &&\text{the lift with respect to }q.
		\\
		g \circ \Phi &:= g_*(\Phi), &&\text{the lift with respect to }Q_\mathcal{V}.
	\end{align*}
\end{itemize}
\end{lem}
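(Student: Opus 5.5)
The proof checks each axiom of Definition~\ref{def: depstruc1} in turn, reading it off from the sas-tower axioms. The indexed category \( I \colon \C B\opp \to \Cat \) itself is obtained from the split fibration \( p \) via the 2-equivalence \( \Fib_{\spl} \simeq \mathsf{IC} \). Thus \( I(c) \) is the fibre \( \C E_c \), and \( I(f) = f^{*p} \) is the reindexing given by the splitting. The set \( \dHom(c,e) \) is the fibre \( Q^{-1}(e) \) for \( e \in \C E_c \). This is a set up to the usual size assumptions, and it is discrete because the arrows of \( Q_\mathcal{V} \) and \( Q_\mathcal{C} \) have unique lifts.

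\textbf{Well-definedness of the two operations.} For \( f \colon c' \to c \) and \( \Phi \in Q^{-1}(e) \), the cartesian lift \( \overline{f}^q(\Phi) \) equals \( \overline{\overline{f}^p(e)}^Q(\Phi) \) by (sas-3). Its domain therefore lies over the domain of \( \overline{f}^p(e) \), which is \( f^{*p}(e) = I(f)(e) \). Hence \( \Phi \circ f \in \dHom(c', I(f)(e)) \).

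For a vertical \( g \colon e \to e' \), the arrow \( g \) lies in \( \mathcal{V} \). Since \( Q_\mathcal{V} \) is a discrete opfibration, there is a unique lift \( g_{*Q}(\Phi) \) with domain \( \Phi \), and its codomain lies over \( e' \).

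\textbf{Axioms (dep-1.1), (dep-1.2), (dep-2.1), (dep-2.2).} Axioms (dep-1.1) and (dep-1.2) are exactly the functoriality of the splitting of \( q \), namely \( (g\circ f)^{*q} = f^{*q}g^{*q} \) and \( 1^{*q} = \id \).

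Axioms (dep-2.1) and (dep-2.2) follow from discreteness of \( Q_\mathcal{V} \). A discrete opfibration has unique lifts, so the lift of \( \theta \circ \eta \) at \( \Phi \) must coincide with the composite of the lifts of \( \eta \) and \( \theta \). Likewise, the lift of an identity is an identity. In other words, \( g \mapsto g_{*Q} \) is strictly functorial.

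\textbf{Axiom (dep-2.3).} This is the interchange law. It is precisely (sas-4): set \( g = \eta \) and note that \( \eta \bullet f = f^{*p}(\eta) = I(f)(\eta) \). The latter equation holds by construction of reindexing in the correspondence between split fibrations and indexed categories.

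\textbf{Main obstacle.} The subtle point is the Sigma-object structure implicitly required in Definition~\ref{def: depstruc1}, which asks for "an indexed category with Sigma-objects". A bare sas-tower carries no such structure. I would handle this in one of two ways:
\begin{itemize}
\item Read the statement as producing only the indexed category with dep-structure, and note that the dep-axioms never mention the Sigma-objects. Any Sigma structure, e.g.\ from a comprehension on \( p \) via Lemma~\ref{lem: equivSigmaComprehension}, can be added independently.
\item Otherwise, restrict to sas-towers whose underlying \( p \) comes with a split comprehension.
\end{itemize}

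Beyond this, the care lies in checking that the domain and codomain bookkeeping in (sas-3) and (sas-4) matches the dep-axioms exactly. This holds because both splittings are strict, so all the equalities hold on the nose rather than up to isomorphism.
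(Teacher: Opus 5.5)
Your proof is correct. It follows the paper's outline (reindexing from the split fibration \(p\), then checking the dep-axioms one by one), but it handles (dep-2.3) differently, and more soundly.

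\textbf{The step (dep-2.3).} The paper argues that \((g\circ\Phi)\circ f\) and \((g\bullet f)\circ(\Phi\circ f)\) coincide because both lie over the same object \(f^{*p}(e')\) of \(\C E\). As stated this is not enough: \(Q\) is not injective on objects, and the fibre \(Q^{-1}(f^{*p}(e'))\) may contain many dep-arrows. Your appeal to (sas-4) is exactly the axiom built for this step, and it is the one Lemma~\ref{2depAmbiLem::1} verifies in the converse direction.

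\textbf{The step (dep-1.1)--(dep-1.2).} Here the paper says both operations are ``lifts with respect to discrete fibrations''. That is inaccurate for \(q\), which is split but not discrete. Your attribution to the splitting of \(q\) is the accurate justification.

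\textbf{Repairing the paper's idea.} The paper's argument can be made rigorous, and doing so shows (sas-4) is redundant. Consider the composite \(f^{*q}(\Phi)\to\Phi\to g_{*Q}(\Phi)\).
\begin{enumerate}
\item It lies over \(f\), so it factors through the \(q\)-cartesian arrow \(\overline{f}^q(g_{*Q}\Phi)\) via a unique \(\varepsilon\) over \(1_{c'}\).
\item Apply \(Q\). By (sas-3), \(Q\) sends both \(q\)-cartesian lifts to \(\overline{f}^p(e)\) and \(\overline{f}^p(e')\). The universal property of \(\overline{f}^p(e')\) then forces \(Q(\varepsilon)=g\bullet f\).
\item Since \(Q_\mathcal{V}\) is a discrete opfibration, \(\varepsilon\) must be the unique lift of \(g\bullet f\) at \(f^{*q}(\Phi)\). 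Hence \(f^{*q}(g_{*Q}\Phi)=(g\bullet f)_{*Q}(f^{*q}\Phi)\).
\end{enumerate}
So (sas-4) already follows from (sas-1)--(sas-3). Your route is shorter given the axioms as stated.

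\textbf{Sigma-objects.} Your remark that the Sigma-objects play no role in the dep-axioms is correct. It matches how the paper itself uses Definition~\ref{def: depstruc1}.
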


\begin{prf} 
	We already know that we can obtain an indexed category from the split fibration \( p \colon \C E \to \C B \).
	To see that it has a dep-structure, we first observe that the functoriality of composition \( \Phi \circ f \) and \( g \circ \Phi \) is guaranteed, as both are lifts with respect to discrete fibrations.
	To check \( (g \circ \Phi) \circ f = (g \circ f) \circ (\Phi \circ f) \), we simply note that both are mapped to the same element of \( \C E \) by \( Q \), and thus must be the same.
\end{prf}

The notion of an arrow between sas-towers will be the expected one, but before we state it, we need a few technical lemmata.

\begin{lem} 
	Let \( \C C \) be a category and let maps as in the diagram
	\[ \begin{tikzcd}[execute at end picture={
			\rueckzug{2-2}{2-4}{4-2}{};
		}]
		a \ar[rr,"f_1"] \ar[dd,"f_2"]  &&  \cdot\ar[dd,"g_1"{near end}] \ar[dr,"g_2"]
		\\
							&c \ar[rr,"h_1",{near start}] \ar[dd,"h_2"{near start}] &&  \cdot\ar[dd,"k"] 
		\\
		 \cdot\ar[rr,"l_1"'{near end}] \ar[dr,"l_2"] & &  \cdot\ar[dr,"m"] 
		\\
				     & \cdot\ar[rr,"n"] && \cdot 
	\end{tikzcd} \]
	be given such that all faces commute. 
	Then we obtain a map \(f_3 \colon a \to c \), making the remaining faces commutative.
\end{lem}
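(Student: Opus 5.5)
The diagram is a cube with vertices: back face $a,\cdot,\cdot,\cdot$ (top-left $a$, top-right $b:=\operatorname{cod} f_1$, bottom-left $d:=\operatorname{cod} f_2$, bottom-right $e:=\operatorname{cod} g_1=\operatorname{cod} l_1$), and front face $c,\cdot,\cdot,\cdot$ (top-right $b':=\operatorname{cod} h_1$, bottom-left $d':=\operatorname{cod} h_2$, bottom-right $e'$). The front face $k\circ h_1=n\circ h_2$ is marked as a pullback. The only edge missing from the cube is the diagonal $f_3\colon a\to c$. The faces that are given to commute are:
\begin{itemize}
\item the back face, $g_1 f_1=l_1 f_2$;
\item the right face, $k g_2=m g_1$;
\item the bottom face, $n l_2=m l_1$;
\item the front pullback square.
\end{itemize}
The remaining faces, which $f_3$ must make commute, are the top face $h_1 f_3=g_2 f_1$ and the left face $h_2 f_3=l_2 f_2$.

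The plan is to construct $f_3$ directly from the universal property of the front pullback. Take the cone with vertex $a$ given by the two legs
\[
g_2\circ f_1\colon a\to b' \qquad\text{and}\qquad l_2\circ f_2\colon a\to d'.
\]
This is a valid cone over the cospan $b'\xrightarrow{k}e'\xleftarrow{n}d'$, as the following chain of equalities shows:
\[
k\circ g_2\circ f_1 = m\circ g_1\circ f_1 = m\circ l_1\circ f_2 = n\circ l_2\circ f_2 .
\]
The first equality is the right face, the second is the back face, and the third is the bottom face.

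The pullback property then yields a (unique) $f_3\colon a\to c$ with $h_1 f_3=g_2 f_1$ and $h_2 f_3=l_2 f_2$. These two identities are exactly the commutativity of the top and left faces, the only faces involving $f_3$. Uniqueness is a bonus and does not need to be stated.

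There is no real obstacle here. The only care needed is bookkeeping: read the diagram correctly, confirming which square is the pullback (the one marked by \verb|\rueckzug| at node $c$) and that the three hypotheses used are precisely the faces not touching $c$ on its non-pullback sides. If I had misidentified the pullback, the same argument would apply verbatim with the roles of the faces permuted: build a cone from the commuting faces and factor through the pullback.
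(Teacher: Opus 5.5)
Your proposal is correct and is the paper's argument. The paper uses the same chain $k\circ g_2\circ f_1 = m\circ g_1\circ f_1 = m\circ l_1\circ f_2 = n\circ l_2\circ f_2$ to get a cone over the front cospan, then applies the universal property of the pullback at $c$ to obtain $f_3$.
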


\begin{prf} 
	This follows as 
	\[
		k \circ g_2 \circ f_1 =  m \circ g_1 \circ f_1 = m \circ l_1 \circ f_2 = n \circ l_2 \circ f_2, 
	\]
	thus we can use the universal property of the pullback at \( c \).
\end{prf}

\begin{cor}\label{Ambifib2depCor::1}
	Given two ambifibrations \( Q \colon \C E \to \C B, Q' \colon \C E' \to \C B' \) and two functors \( \hat F \colon \C E \to \C E', F \colon \C B \to \C B' \) such that \( F \) maps arrows in \( \mathcal{L} \) to arrows in \( \mathcal{L}' \) and arrows in \( \mathcal{R} \) to \( \mathcal{R}' \), and
	\[ \begin{tikzcd}
		\C E \ar[r,"\hat F"] \ar[d,"Q"'] & \C E' \ar[d,"Q'"] 
		\\
		\C B \ar[r,"F"] & \C B'
	\end{tikzcd} \]
	commutes, we obtain \( \hat F_\mathcal{L} \colon \C E_\mathcal{L} \to \C E_{\mathcal{L}'} \) and \( \hat F_\mathcal{R} \colon \C E_\mathcal{R} \to \C E'_{\mathcal{R}'} \) such that 
	\[ \begin{tikzcd}
		\C E_\mathcal{L} \ar[r,"\hat F_\mathcal{L}"] \ar[d,"Q_\mathcal{L}"'] 
		& \C E'_{\mathcal{L}'} \ar[d,"Q'_{\mathcal{L}'}"] 
		\\
		\mathcal{L} \ar[r,"F_\mathcal{L}"] & \mathcal{L'}
	\end{tikzcd} \text{ and }
	\begin{tikzcd}
		\C E_\mathcal{R}\ar[r,"\hat F_\mathcal{R}"] \ar[d,"Q_\mathcal{R}"'] 
		& \C E'_{\mathcal{R}'} \ar[d,"Q'_{\mathcal{R}'}"] 
		\\
		\mathcal{R} \ar[r,"F_\mathcal{R}"] & \mathcal{R'}
	\end{tikzcd} 
\]
commute (where \( F_\mathcal{L},F_\mathcal{R} \) are obtained through restriction).	
\end{cor}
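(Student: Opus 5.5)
The plan is to obtain \( \hat F_\mathcal{L} \) and \( \hat F_\mathcal{R} \) from the universal property of the pullbacks defining \( \C E'_{\mathcal{L}'} \) and \( \C E'_{\mathcal{R}'} \). By the preceding lemma the choice of pullback is irrelevant, so I may regard \( \C E_\mathcal{L} \), \( \C E'_{\mathcal{L}'} \) as pullbacks in \( \Cat \). I will then apply the preceding lemma on the commutative cube in \( \Cat \). I treat \( \mathcal{L} \); the case of \( \mathcal{R} \) is identical with \( \mathcal{R},\mathcal{R}' \) in place of \( \mathcal{L},\mathcal{L}' \).

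First, since \( F \) sends arrows of \( \mathcal{L} \) to arrows of \( \mathcal{L}' \), it restricts to a functor \( F_\mathcal{L}\colon \mathcal{L}\to\mathcal{L}' \). Here \( \mathcal{L} \) is viewed as the wide subcategory of \( \C B \) on the arrows in \( \mathcal{L} \), which is closed under composition and contains identities because it is the left class of a factorisation system. By construction \( \iota'\circ F_\mathcal{L} = F\circ\iota \), where \( \iota,\iota' \) are the inclusions. I then set up the cube: the back square is the pullback \( \C E_\mathcal{L}\to\C E \), \( \C E_\mathcal{L}\to\mathcal{L} \) over \( \C B \); the front square is the pullback \( \C E'_{\mathcal{L}'}\to\C E' \), \( \C E'_{\mathcal{L}'}\to\mathcal{L}' \) over \( \C B' \); the connecting edges are \( \hat F \), \( F_\mathcal{L} \) and \( F \). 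The faces that commute by hypothesis are exactly those required by the lemma: the two pullback squares, \( Q'\circ\hat F = F\circ Q \), and \( \iota'\circ F_\mathcal{L} = F\circ\iota \). The lemma then yields a functor \( \hat F_\mathcal{L}\colon\C E_\mathcal{L}\to\C E'_{\mathcal{L}'} \) making the remaining faces commute. In particular \( Q'_{\mathcal{L}'}\circ\hat F_\mathcal{L} = F_\mathcal{L}\circ Q_\mathcal{L} \), which is the desired square.

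One could also argue concretely, using the explicit description of \( \C E_\mathcal{L} \) as the subcategory of \( \C E \) of arrows sent into \( \mathcal{L} \). Then \( \hat F_\mathcal{L} \) is simply the restriction of \( \hat F \), and it is well defined because \( Q'(\hat F(g)) = F(Q(g))\in\mathcal{L}' \) whenever \( Q(g)\in\mathcal{L} \).

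The only point needing care, which I expect to be the main (if mild) obstacle, is that the lemma is stated in an arbitrary category while here it is applied in \( \Cat \) with strict pullbacks. I must therefore make sure that the pullbacks used are strict pullbacks of categories and that all faces commute on the nose, not merely up to isomorphism. The explicit description of \( \C E_\mathcal{L} \) settles this, and the restriction-based argument above serves as a direct check.
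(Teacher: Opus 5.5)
Your proposal is correct and follows the paper's own proof. The paper likewise builds, for each of $\mathcal{L}$ and $\mathcal{R}$, the cube whose back and front faces are the pullbacks defining $\C E_\mathcal{L}$ and $\C E'_{\mathcal{L}'}$ (respectively their $\mathcal{R}$-versions), with connecting edges $\hat F$, $F$ and the restricted $F$, and then applies the preceding cube lemma (which only needs the front face to be a pullback) to obtain $\hat F_\mathcal{L}$ and $\hat F_\mathcal{R}$; your concrete description of $\hat F_\mathcal{L}$ as the restriction of $\hat F$ is exactly what that lemma produces for the explicit strict pullbacks in $\Cat$, so it is a valid direct check.
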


\begin{prf} 
	We obtain diagrams
	\[ \begin{tikzcd}[execute at end picture={
			\rueckzug{1-1}{1-3}{3-1}{};
			\rueckzug{2-2}{2-4}{4-2}{};
		}]
		\C E_ \mathcal{L} \ar[rr]  \ar[dd,"Q_\mathcal{L}"] 
		&& \C E \ar[dr,"\hat F"] \ar[dd,"Q"{near end}] 
		\\
		& \C E'_{\mathcal{L}'} \ar[rr] \ar[dd,"Q'_{\mathcal{L}'}"{near end}] 
		&& \C E' \ar[dd,"Q'"] 
		\\
		\mathcal{L} \ar[rr] \ar[dr,"F_\mathcal{L}"'] 
		&& \C B \ar[dr,"F"] 
		\\
		& \mathcal{L}' \ar[rr] && \C B'
	\end{tikzcd}\text{ and }\begin{tikzcd}[execute at end picture={
			\rueckzug{1-1}{1-3}{3-1}{};
			\rueckzug{2-2}{2-4}{4-2}{};
		}]
		\C E_ \mathcal{R} \ar[rr]  \ar[dd,"Q_\mathcal{R}"] 
		&& \C E \ar[dr,"\hat F"] \ar[dd,"Q"{near end}] 
		\\
		& \C E'_{\mathcal{R}'} \ar[rr] \ar[dd,"Q'_{\mathcal{R}'}"{near end}] 
		&& \C E' \ar[dd,"Q'"]
		\\
		\mathcal{R} \ar[rr] \ar[dr,"F_\mathcal{R}"'] 
		&& \C B \ar[dr,"F"] 
		\\
		& \mathcal{R}' \ar[rr] && \C B'
	\end{tikzcd}\ \]
	so, by the preceding lemma, we get the desired functors \( \hat F_\mathcal{L}, \hat F_\mathcal{R} \).
\end{prf}

\begin{defn}[Maps between sas-towers]\label{sasmapsdef}
	\index{Sad-tower!map|hyperit}
	Let \((Q \colon \C G \to \C E, p \colon \C E \to \C B), (Q' \colon \C G' \to \C E', p' \colon \C E' \to \C B')\) be two sas-towers. 
	A \emph{map} \( (Q,p) \to (Q',p') \) is a triple \( (\hat F,\tilde F,F) \) of functors
	\[
		\hat F \colon \C G \to \C G', \tilde F \colon \C E' \to \C E, F \colon \C B \to \C B'
	\]
	such that 
	\begin{enumerate}[itemindent = 2em]
		\renewcommand{\labelenumi}{(sas-{\theenumi})}
		\setcounter{enumi}{\value{sadcounter}}
	\item \( (\hat F,F) \) is a map of split fibrations,
	\item \( (\tilde F,F) \) is a map of split fibrations  and
	\item Both \( (\hat F_\mathcal{V},F), (\hat F_\mathcal{C},F) \) are maps of discrete fibrations, where \( \hat F_\mathcal{V},\hat F_\mathcal{C} \) are obtained as in the preceding corollary.
		\setcounter{sadcounter}{\value{enumi}}
	\end{enumerate}
	
\end{defn}

Before we can prove that any arrow between sas-towers corresponds to an arrow between dep-categories, we need to define  arrows between dep-categories. 
For this we first clarify what our arrows between indexed categories are.

\begin{defn} 
	Let \( I\colon \C C\opp \to \Cat, J \colon \C D\opp \to \Cat\) be dep-categories. 
	A dep-functor \( I \to J \) consists of a triple \( (F,F_{\spacer}, F^{\spacer})\) such that \( (F,F_{\spacer}) \) is a map between indexed categories and \( F^{\spacer} \) is a family of maps \( F^\lambda \colon \dHom(c,\lambda) \to \dHom\big(F(c),F_c(\lambda)\big) \) for every \( \lambda \), subject to the following conditions
	\begin{enumerate}[leftmargin = 2.5em, itemindent = 1em]
	\renewcommand{\labelenumi}{(dep-\theenumi)}
	\setcounter{enumi}{\value{depcounter}}
	\item\label{depeq::1} For all \( \lambda, f \colon c' \to c \) and all \( \Phi \in \dHom(c,\lambda) \), the equality \( F^{F(f)(\lambda)}(\Phi \circ f)  = F^\lambda(\Phi) \circ F(f)\) holds.
	\item\label{depeq::2} For all \( \lambda,\mu \) and \( \eta \colon \lambda \to \mu \) and all \( \Phi \in \dHom(c,\lambda) \), the equality \( F^\mu(\eta \circ \Phi) = F_c(\eta) \circ F^\lambda(\Phi)  \) holds.
		\setcounter{depcounter}{\value{enumi}}
	\end{enumerate}
\end{defn}

\begin{lem}\label{2depfuntomap}
	Let \(I\colon \C C\opp \to \Cat,I' \colon  \C C'\opp \to \Cat\) be two dep-categories and
	\(( Q_I, p_I)\), \( (Q_{I'}, p_{I'}) \)  the corresponding sas-towers obtained as in Lemma \ref{2depAmbiLem::1}. 
	Then we obtain for every dep-functor \( (F,F_{\spacer},F^{\spacer}) \colon \C C \to \C C' \) a map \( (\hat F, \tilde F, F) \colon ( Q_I,p_I) \to (Q_I,p_I)  \) of sas-towers.
\end{lem}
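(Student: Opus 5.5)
The plan is to construct the three functors explicitly from the dep-functor data and then check the three conditions of Definition~\ref{sasmapsdef} one at a time. The paper's typos are read as intended: the middle functor is \( \tilde F \colon \C E \to \C E' \) and the target tower is \( (Q_{I'},p_{I'}) \). For the base we take \( F \colon \C C \to \C C' \) itself. For \( \tilde F \) we use the functor on total categories induced by \( (F,F_{\spacer}) \) under the 2-equivalence \( \Fib_{\spl} \simeq \mathsf{IC} \). Concretely, \( \tilde F(c,\lambda) = (F(c),F_c(\lambda)) \) and \( \tilde F(f,\eta) = (F(f),F_{c}(\eta)) \), suitably transported along the naturality square of \( F_{\spacer} \). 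By that theorem, \( (\tilde F,F) \) is already a map of split fibrations, which settles the second condition for free.

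On \( \C G \) we set \( \hat F(\Phi) := F^\lambda(\Phi) \) for \( \Phi \in \dHom(c,\lambda) \), and \( \hat F(f,\eta) := \tilde F(f,\eta) \). The first thing to check is that \( \hat F \) sends morphisms of \( \C G \) to morphisms of \( \C G' \). Given \( (f,\eta)\colon\Phi\to\Psi \), we have \( \eta\circ\Phi=\Psi\circ f \). Applying \( F^{\spacer} \) and using (dep-\ref{depeq::2}) on the left and (dep-\ref{depeq::1}) on the right gives
\[ F(\eta)\circ F^\lambda(\Phi) = F^\mu(\Psi)\circ F(f), \]
which is exactly the defining equation of a morphism in \( \C G' \); the naturality of \( F_{\spacer} \) is used to identify the indices. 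Functoriality of \( \hat F \) is inherited from that of \( \tilde F \), since morphisms of \( \C G \) are just morphisms of \( \C E \) satisfying a property. By construction \( Q_{I'}\circ\hat F=\tilde F\circ Q_I \).

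Next, \( (\hat F,F) \) has to be a map of split fibrations for \( q \) and \( q' \). The chosen lift in \( \C G \) is \( (f,1_\lambda)\colon\Phi\circ f\to\Phi \). Its image is \( (F(f),1)\colon F^{\lambda}(\Phi)\circ F(f)\to F^\lambda(\Phi) \), where we use (dep-\ref{depeq::1}) to rewrite the domain, and this is precisely the chosen lift in \( \C G' \). So the splitting is preserved on the nose.

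For the last condition, since \( \tilde F \) preserves vertical and cartesian arrows, Corollary~\ref{Ambifib2depCor::1} produces \( \hat F_{\mathcal V} \) and \( \hat F_{\mathcal C} \) commuting with \( Q_{\mathcal V},Q_{\mathcal C} \). For maps into discrete (op)fibrations we only need to check that the unique lifts are carried to unique lifts. For a vertical arrow the lift has codomain \( (\eta\circ\Phi)\circ f^{-1} \), and (dep-\ref{depeq::1}) and (dep-\ref{depeq::2}) show that its image is the corresponding lift in \( \C G' \); the cartesian case is analogous, using \( \eta^{-1}\circ(\Phi\circ f) \). Because the fibres are discrete, preservation also follows directly from the commuting square and uniqueness of lifts.

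I expect the main obstacle to be bookkeeping with the indices of \( F_{\spacer} \): showing that \( F_{c'}(I(f)(\lambda)) = I'(F f)(F_c\lambda) \) strictly, so that the dep-arrows live in the right hom-sets. This holds because \( F_{\spacer} \) is a strict natural transformation and the splittings are strict.
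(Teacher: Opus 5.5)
Your proposal is correct and follows the paper's route. The paper likewise takes \((\tilde F,F)\) from the indexed-category/split-fibration correspondence, sets \(\hat F\) to \(F^{\lambda}\) on objects and \(\tilde F\) on arrows, uses Corollary~\ref{Ambifib2depCor::1} for the discrete (op)fibration condition, and shows that the chosen lifts \((f,1)\) go to chosen lifts. You additionally check two things the paper leaves implicit: that \(\hat F\) sends morphisms of \(\C G\) to morphisms of \(\C G'\), via (dep-\ref{depeq::1}) and (dep-\ref{depeq::2}), and that \(\tilde F\) preserves \(\mathcal V\) and \(\mathcal C\) as the corollary requires.
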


\begin{prf} 
	The map is defined as follows: The functors \( (\tilde F, F)  \) are defined as the map of split fibrations obtained from the underlying maps of indexed categories.
	The functor \( \hat F \colon \C G \to \C G' \) is defined by
	\[
		\hat F(\Phi) := F_\lambda(\Phi) \text{ and }\hat F(f,\eta) = (f,\eta).
	\]
	It follows from Corollary \ref{Ambifib2depCor::1} that both \( \hat F_\mathcal{V},\hat F_\mathcal{C}  \) are maps of discrete  fibrations---that is they commute with \( Q \) restricted to either \( \mathcal{C} \) or \( \mathcal{V} \)---and \( (\hat F,F) \) is also a map of split fibrations, as
	\[
		q_{I'}\big(\hat F(\Phi)\big) = F(c) = F\big(q_I(\Phi)\big),\quad q_{I'}\big(\hat F(f,\eta)\big) = F(f) = F\big(q_I(f,\eta)\big).
	\]
	and for \( f \colon c' \to c, \lambda \in \fHom(c) \) and \( \Phi \in \dHom(\lambda) \) we have 
	\[
		\overline{F(f)}\big(\tilde{F}(\Phi)\big) = \big(F(f),1_{F_c(\lambda)}\big) = \tilde F(f,1_\lambda) = F\big(\overline{f}(\Phi)\big).
	\]
	Hence \( (\hat F, \tilde F, F) \) is a map of sas-towers.
\end{prf}

Conversely, every map of sas-towers yields a dep-functor between the corresponding dep-categories.

\begin{lem}\label{sasmapto2depfun}
	Let \( (Q'\colon \C G' \to \C E', p' \colon \C E' \to \C B') \) and \((Q \colon \C G \to \C E, p \colon \C E \to \C B ) \)
	be sas-towers and \( I \colon \C C\opp \to \Cat,I'\colon \C C'\opp \to \Cat \) be the associated dep-categories from Lemma \ref{sasto2dep}. 
	Then any map \( (\hat F, \tilde F, F) \colon (Q',p') \to (Q,p) \) of sas-towers translates to a dep functor \( (F,F_{\spacer}, F^{\spacer}) \colon I \to I' \) where 
	\[
		F_c(e) := \tilde F(e) \text{ and } F^e(\Phi) := \hat F(\Phi).
	\]
\end{lem}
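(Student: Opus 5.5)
The plan is to verify the axioms of a dep-functor one by one, using only the structural properties of the sas-tower map. The typing in the statement is slightly loose (the map goes \( (Q',p') \to (Q,p) \) and \( \tilde F \) is written with domain \( \C E' \)). I will read it as a map from the tower inducing \( I \) to the tower inducing \( I' \), with \( \tilde F \colon \C E \to \C E' \) lying over \( F \colon \C B \to \C B' \) and \( \hat F \colon \C G \to \C G' \) commuting with the \( Q \)'s, as in Lemma~\ref{2depfuntomap}. With this reading the statement follows by symmetry of the argument.

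\textbf{Step 1: the indexed-category part.} Since \( (\tilde F,F) \) is a map of split fibrations (sas-6), the 2-equivalence \( \Fib_{\spl}\simeq\mathsf{IC} \) gives a map of indexed categories \( (F,F_{\spacer}) \) with \( F_c(e)=\tilde F(e) \). Naturality of \( F_{\spacer} \) is exactly preservation of the splitting.

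\textbf{Step 2: \( F^e \) is well typed.} Let \( \Phi\in Q^{-1}(e) \). By Corollary~\ref{Ambifib2depCor::1}, the relevant square commutes, i.e.\ \( Q'\hat F=\tilde F Q \). Hence \( \hat F(\Phi)\in Q'^{-1}(\tilde F(e))=\dHom\big(F(c),F_c(e)\big) \).

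\textbf{Step 3: precomposition, (dep-\ref{depeq::1}).} Precomposition \( \Phi\circ f \) is the domain of the chosen \( q \)-cartesian lift \( \overline f^q(\Phi) \). Since \( (\hat F,F) \) is a map of split fibrations (sas-5), it sends chosen lifts to chosen lifts: \( \hat F(\overline f^q(\Phi))=\overline{F(f)}^{q'}(\hat F\Phi) \). Taking domains gives \( \hat F(\Phi\circ f)=\hat F(\Phi)\circ F(f) \). The indices agree because \( \tilde F(f^*e)=F(f)^*\tilde F(e) \) by Step 1.

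\textbf{Step 4: postcomposition, (dep-\ref{depeq::2}).} Postcomposition \( g\circ\Phi \) is the codomain of the unique \( Q_\mathcal{V} \)-lift of the vertical arrow \( g \) starting at \( \Phi \). Because \( \tilde F \) preserves vertical maps (it lies over \( F \), which sends identities to identities), \( \hat F \) restricts to \( \hat F_\mathcal{V} \). By (sas-7), \( (\hat F_\mathcal{V},\tilde F_\mathcal{V}) \) is a map of discrete opfibrations, so it sends the unique lift to the unique lift of \( \tilde F(g) \) at \( \hat F(\Phi) \). Uniqueness of lifts along a discrete opfibration then forces \( \hat F(g\circ\Phi)=\tilde F(g)\circ\hat F(\Phi)=F_c(g)\circ F^e(\Phi) \).

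\textbf{Main obstacle.} The only delicate point is Step 4. One must make sure that the hypothesis in (sas-7), stated as ``maps of discrete fibrations'', really yields preservation of opcartesian lifts for the \( \mathcal V \)-part. I would argue this directly: in a discrete opfibration every arrow is the unique lift of its image. So it suffices that \( \hat F \) maps the lift \( g_*\colon\Phi\to g\circ\Phi \) to some arrow over \( \tilde F(g) \) with domain \( \hat F(\Phi) \). This holds by functoriality together with \( Q'\hat F=\tilde F Q \). Uniqueness then identifies that arrow with the lift, so the argument needs only commutation plus discreteness, not an extra preservation hypothesis.
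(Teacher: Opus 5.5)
Your proposal is correct and follows essentially the same route as the paper: the indexed part comes from the split-fibration map $(\tilde F,F)$, well-typedness of $F^e$ from $Q'\hat F=\tilde F Q$, (dep-\ref{depeq::1}) from $\hat F$ preserving the chosen $q$-lifts, and (dep-\ref{depeq::2}) from $\hat F_\mathcal{V}$ preserving the unique $Q_\mathcal{V}$-lifts. Your extra argument that uniqueness of lifts in a discrete opfibration already forces that preservation only makes explicit what the paper asserts without comment. One small correction: $Q'\hat F=\tilde F Q$ is a hypothesis of Corollary~\ref{Ambifib2depCor::1} (and so is implicit in the definition of a sas-map), not something the corollary proves.
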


\begin{prf} 
	As in the case for indexed categories \( I \), is already well-defined.
	To see that \(F^e \) are well-defined, we observe that for all \(e \in \fHom(c)  \), that is \( p(e) = c \), and all \( \Phi \in \dHom(e) \), that is \( Q(\Phi) = e \), 
	\[
		Q\big(F^e(\Phi)\big) = Q\big(\hat F(e)\big) = \tilde F(Q'(\Phi)) = F_{F(c)}\big(Q'(\Phi)\big)	
	\]
	as required.
	To see the equalities \hyperref[depeq::1]{(dep-\ref*{depeq::1})} and \hyperref[depeq::2]{(dep-\ref*{depeq::2})}  we compute
	\begin{align*} 
	&	F^e(\Phi \circ f) = \hat F(\Phi \circ f) = \hat F\big(\overline{f}\Phi)\big) = \overline{F(g)}\big( \hat F(\Phi)\big) \text{ for }(f \colon c' \to c) \in \C B.
	\\
	&	F^e(g \circ \Phi) = \hat F(g \circ \Phi) = \hat F\big(\underline{g}(\Phi)\big) = \underline{\tilde F(g)}\big( \hat F(\Phi)\big) \text{ for }(g \colon e' \to e) \in \mathcal{V},
	\end{align*}
	thus \( F \) is a dep-functor.
\end{prf}

It is immediate from the above definition that this assignment of sas-maps to dep-functors fulfils functoriality.

\begin{defn}[Transformations of maps of sas-towers]\label{sastrafodef}
	Let \( (Q,p),(Q',p') \) be sas-towers and \( (\hat F, \tilde F, F),(\hat G, \tilde G, G) \colon (Q,p) \to (Q',p') \) be maps of sas-towers.
	We define a \emph{transformation of sas-maps} \( (\hat F,\tilde F, F) \to (\hat G, \tilde G,G) \) to be a triple \( (\hat \eta, \tilde \eta,\eta) \) where
	\[
		\hat \eta \colon \hat F \To \hat G,\quad \tilde \eta \colon \tilde F \To \tilde G,\quad \eta \colon F \To G
	\]
	are natural transformations such that the following conditions are met:
	\begin{enumerate}[itemindent = 2em]
		\renewcommand{\labelenumi}{(sas-{\theenumi})}
		\setcounter{enumi}{\value{sadcounter}}
	\item Both \( Q' \ast \hat \eta = \tilde \eta \ast Q \) and \( p' \ast \tilde \eta = \eta \ast p \), that is the diagram
		\[ \begin{tikzcd}
			\C G \ar[r,"Q"'] \ar[d,shift left = .5ex, bend left = 20,"\hat F"{name = U1}]
			\ar[d,shift right = .5ex, bend right = 20,"\hat G"{name = U2,swap}]
			&
			\C E \ar[d,shift left = .5ex, bend left = 20,"\tilde F"{name = V1}]
			\ar[d,shift right = .5ex, bend right = 20,"\tilde G"{name = V2,swap}]
			\ar[r,"p"']
			&		
			\C B \ar[d,shift left = .5ex, bend left = 20," F"{name = W1}]
			\ar[d,shift right = .5ex, bend right = 20," G"{name = W2,swap}]
			\\[.5em] \C G' \ar[r,"Q'"]& \C E' \ar[r,"p'"]	& \C B'
			\ar[from = U1, to = U2,Rightarrow,shorten <= 2pt, shorten >= 2pt,"\hat\eta"]	
			\ar[from = V1, to = V2,Rightarrow,shorten <= 2pt, shorten >= 2pt,"\tilde\eta"]	
			\ar[from = W1, to = W2,Rightarrow,shorten <= 2pt, shorten >= 2pt,"\eta"]	
		\end{tikzcd} \]
	commutes.	
\item Both \( (\hat\eta,\eta) \colon (\hat F,F) \to (\hat G,G) \) and \( (\tilde\eta,\eta) \colon (\tilde F,F) \to (\tilde G,G) \) fulfil 
	\begin{align*} 
		&\overline{\eta_b}^{q}(\Phi) = \hat\eta_\Phi \quad\text{for }b \in \C B, \Phi \in \C G, \text{ such that }q(e) = b,\\
		&\overline{\eta_b}^{p}(e) = \tilde\eta_e \quad\text{for }b \in \C B, e \in \C E,\text{ such that }p(e) = b.
	\end{align*}
	\end{enumerate}
\end{defn}

\begin{defn} 
	Let \( I \colon \C C\opp \to \Cat, I' \colon \C D\opp \to \Cat \) be dep-categories and \( (F,F_{\spacer},F^{\spacer})\), \( (G,G_{\spacer},G^{\spacer}) \) be dep-functors.
	A \emph{dep-natural transformation} \( (F,F_{\spacer},F^{\spacer}) \To  (G,G_{\spacer},G^{\spacer})  \) is a transformation \( \eta \colon (F,F_{\spacer}) \To (G,G_{\spacer}) \) of the underlying indexed functors such that 
	\begin{enumerate}[leftmargin = 2.5em, itemindent = 1em]
	\renewcommand{\labelenumi}{(dep-\theenumi)}
	\setcounter{enumi}{\value{depcounter}}
	\item for all \(c \in \C C, \lambda \in I(c) \) and \( \Phi \in \dHom(c,\lambda) \) the equality \( G^{\lambda}(\Phi) \circ \eta_c = F^{\lambda}(\Phi) \) holds.
	\end{enumerate}
\end{defn}

\begin{lem}\label{2depnattotrafo} 
	Let \( I,I'\) be dep-categories, \( F,G \colon I \to I'\) be dep-functors and \( \eta \colon F \To G \) be a dep-natural transformation. 
	Then we obtain a transformation of the associated sas-maps, \( \sasmap \eta \colon \sasmap F \to \sasmap G	 \), where \( \sasmap F, \sasmap G \) are obtained as in Lemma \ref{2depfuntomap}.
\end{lem}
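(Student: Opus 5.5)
We take \( \eta \) itself as the base component, since a dep-natural transformation is in particular a natural transformation \( \eta \colon F \To G \) between the base functors \( \C C \to \C D \). The other two components are built from cartesian lifts, as (sas-9) requires. For \( (c,\lambda) \in \C E \) we put
\[
	\tilde\eta_{(c,\lambda)} := \overline{\eta_c}^{p'}\big(G(c),G_c(\lambda)\big) \colon \big(F(c),F_c(\lambda)\big) \to \big(G(c),G_c(\lambda)\big).
\]
In the split fibration associated with \( I' \) this lift is \( (\eta_c, 1) \), viewed as an arrow out of \( \big(F(c),I'(\eta_c)(G_c(\lambda))\big) \). Its domain is correct because the transformation condition \( (1_{I'}\ast\eta)\circ F_{\spacer} = G_{\spacer} \) gives \( I'(\eta_c)(G_c(\lambda)) = F_c(\lambda) \). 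For \( \Phi \in \dHom(c,\lambda) \) we similarly put \( \hat\eta_\Phi := (\eta_c,1_{F_c(\lambda)}) \colon F^\lambda(\Phi) \to G^\lambda(\Phi) \). This is a morphism of \( \C G' \) exactly when \( 1\circ F^\lambda(\Phi) = G^\lambda(\Phi)\circ\eta_c \), and that equation is the dep-naturality axiom. So \( \hat\eta_\Phi \) is the \( q' \)-cartesian lift \( \overline{\eta_c}^{q'}(G^\lambda(\Phi)) \) from the proof of Lemma~\ref{2depAmbiLem::1}.

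First we check naturality. For \( \tilde\eta \) this is the standard fact that the split-fibration translation of a transformation of indexed maps is a 2-cell in \( \Fib_{\spl} \), so we quote the known 2-equivalence \( \Fib_{\spl}\simeq\mathsf{IC} \). For \( \hat\eta \), let \( (f,\theta)\colon\Phi\to\Psi \) be a morphism of \( \C G \). Arrows of \( \C G' \) are just arrows of \( \C E' \) satisfying a property, so \( Q' \) is faithful. Hence the naturality square for \( \hat\eta \) in \( \C G' \) commutes as soon as its image under \( Q' \) commutes. That image is the naturality square of \( \tilde\eta \) at \( (f,\theta) \), which we have just verified.

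Next come the conditions of Definition~\ref{sastrafodef}. Condition (sas-8) holds on objects by construction and on arrows by the definition of \( Q' \): we have \( Q'(\hat\eta_\Phi) = (\eta_c,1) = \tilde\eta_{Q(\Phi)} \) and \( p'(\tilde\eta_{(c,\lambda)}) = \eta_c = \eta_{p(c,\lambda)} \). Condition (sas-9) holds because we defined \( \hat\eta \) and \( \tilde\eta \) as the chosen cartesian lifts of \( \eta_b \). Here we use that the splitting of \( q' \) is \( \overline{f}(\Phi) = (f,1) \) (Lemma~\ref{2depAmbiLem::1}) and that the splitting of \( p' \) is \( (f,1) \) as well.

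The only genuinely nontrivial point is that \( \hat\eta_\Phi \) is an arrow of \( \C G' \) with the right domain and codomain. This uses the dep-naturality equation \( G^\lambda(\Phi)\circ\eta_c = F^\lambda(\Phi) \) together with the identification \( I'(\eta_c)\circ G_c = F_c \) coming from the indexed transformation condition. Once these two bookkeeping facts are aligned, everything else reduces to faithfulness of \( Q' \) and the known result for split fibrations.
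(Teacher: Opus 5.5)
Your proposal is correct and follows essentially the same route as the paper: you take \( (\tilde\eta,\eta) \) from the standard split-fibration correspondence, set \( \hat\eta_\Phi = (\eta_c,1_{F_c(\lambda)}) \) (which is well defined precisely by dep-naturality), and check (sas-8) and (sas-9) componentwise, since these components are the chosen cartesian lifts. Your faithfulness-of-\( Q' \) argument for the naturality of \( \hat\eta \) is a welcome addition, because the paper leaves that step implicit.
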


\begin{prf} 
	We first note that we obtain the transformation \((\tilde \eta,\eta) \colon ( \tilde F, F) \To (\tilde G,G) \) as in the standard result for indexed categories, thus in 
	\[ \begin{tikzcd}
			\C G \ar[r,"Q"'] \ar[d,shift left = .5ex, bend left = 20,"\hat F"{name = U1}]
			\ar[d,shift right = .5ex, bend right = 20,"\hat G"{name = U2,swap}]
			&
			\C E \ar[d,shift left = .5ex, bend left = 20,"\tilde F"{name = V1}]
			\ar[d,shift right = .5ex, bend right = 20,"\tilde G"{name = V2,swap}]
			\ar[r,"p"']
			&		
			\C B \ar[d,shift left = .5ex, bend left = 20," F"{name = W1}]
			\ar[d,shift right = .5ex, bend right = 20," G"{name = W2,swap}]
			\\[.5em] \C G' \ar[r,"Q'"]& \C E' \ar[r,"p'"]	& \C B'
			\ar[from = U1, to = U2,Rightarrow,shorten <= 2pt, shorten >= 2pt,"\hat\eta"]	
			\ar[from = V1, to = V2,Rightarrow,shorten <= 2pt, shorten >= 2pt,"\tilde\eta"]	
			\ar[from = W1, to = W2,Rightarrow,shorten <= 2pt, shorten >= 2pt,"\eta"]	
		\end{tikzcd} \]
	we already know that the right square commutes. 
	To see that the left square commutes we first have to define \( \hat \eta \).
	We set \( \hat \eta_\Phi = (\eta_c,1_{F_c(\lambda)}) \colon \hat F(\Phi) \to \hat G(\Phi) \)---which is possible as \( F(\Phi) = G(\Phi) \circ f \).
	This allows us to compute
	\[
		(Q' \ast \hat\eta)_\Phi =  Q'(\eta_c,1_{F_c(\lambda)}) = (\eta_c,1_{F_c(\lambda)}) = \tilde\eta_\lambda = \tilde\eta_{Q(\Phi)} = (\tilde\eta \ast Q)_\Phi. 	
	\]
	Additionally, we calculate that
	\[
		\overline{\eta_c}^{q}(\Phi) = \big((\eta_c,1_{F_c(\lambda)}) \colon G(\Phi) \circ \eta_c \to G(\Phi)\big) = \hat\eta_\Phi.\qedhere
	\]
\end{prf}
 
Obviously, we have the reverse direction as well.

\begin{lem}\label{trafoto2depnat}
	Let \( (Q,p),(Q',p') \) be sas-towers, \( \sasmap F, \sasmap G \colon (Q,p) \to (Q',p') \) be maps of sas-towers. Then any transformation \( \sasmap \eta \colon \sasmap F \to \sasmap G \) gives rise to a dep-natural transformation \( \eta \colon( F,F_{\spacer}, F^{\spacer}) \To (G,G_{\spacer}, G^{\spacer})\) between the associated dep-functors from Lemma \ref{sasmapto2depfun}.
\end{lem}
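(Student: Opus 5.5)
The plan is to split the sas-transformation $\sasmap\eta$ into its two layers. The pair $(\tilde\eta,\eta)$ handles the indexed-category layer. The component $\hat\eta$ then supplies the single extra dep-condition, and I expect it to do so almost automatically because $Q'$ is discrete on cartesian arrows.

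First, I apply the standard correspondence $\Fib_{\spl}\simeq\mathsf{IC}$ to the 2-cell $(\tilde\eta,\eta)\colon(\tilde F,F)\To(\tilde G,G)$ of split fibrations. This gives a transformation $\eta\colon(F,F_{\spacer})\To(G,G_{\spacer})$ between the underlying maps of indexed categories. The underlying natural transformation is $\eta$ itself. The equation $(1\ast\eta)\circ F_{\spacer}=G_{\spacer}$ should come from the second condition of Definition~\ref{sastrafodef}, namely $\tilde\eta_e=\overline{\eta_b}^{p}(e)$, which says that the components of $\tilde\eta$ are exactly the chosen cartesian lifts. Concretely, this gives $F_b(e)=\eta_b^{*}\big(G_b(e)\big)$ strictly, which is the required equation in the split setting. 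I take this step from the literature, as the paper does for indexed categories.

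Second, I verify the dep-condition $G^{e}(\Phi)\circ\eta_b=F^{e}(\Phi)$ for $\Phi\in\dHom(b,e)$, i.e.\ $Q(\Phi)=e$ and $p(e)=b$. By Lemma~\ref{sasto2dep} the left side is the reindexing $\eta_b^{*q'}\big(\hat G(\Phi)\big)$, which is the domain of the chosen $q'$-cartesian lift $\overline{\eta_b}^{q'}(\hat G(\Phi))$. The second condition of Definition~\ref{sastrafodef} says this chosen lift equals $\hat\eta_\Phi\colon\hat F(\Phi)\to\hat G(\Phi)$. Comparing domains gives $\eta_b^{*q'}(\hat G(\Phi))=\hat F(\Phi)=F^{e}(\Phi)$, which is the claim.

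The main obstacle is the bookkeeping in this second step. The identity compares dep-arrows lying over $F_b(e)$ and over $\eta_b^{*}(G_b(e))$, so I must first check that these objects of $\C E'$ agree on the nose. I will use the first condition, $Q'\ast\hat\eta=\tilde\eta\ast Q$, which gives $Q'(\hat\eta_\Phi)=\tilde\eta_e=\overline{\eta_b}^{p'}(\tilde G(e))$. Together with (sas-3), this shows that $\hat\eta_\Phi$ is the lift of $\overline{\eta_b}^{p'}(\tilde G(e))$ along $Q'$. Since $Q'_{\mathcal C}$ is a discrete fibration, that lift is unique, so no ambiguity arises. With this settled, naturality and functoriality of the assignment need no further work, as they are inherited from the indexed-category case.
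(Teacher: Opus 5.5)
Your proposal is correct and follows essentially the same route as the paper. You get the indexed-category transformation from $(\tilde\eta,\eta)$ via the standard correspondence. You then read off $G^{e}(\Phi)\circ\eta_b=F^{e}(\Phi)$ by comparing domains in $\overline{\eta_b}^{q'}\big(\hat G(\Phi)\big)=\hat\eta_\Phi\colon\hat F(\Phi)\to\hat G(\Phi)$. Your extra bookkeeping via $Q'\ast\hat\eta=\tilde\eta\ast Q$, (sas-3) and discreteness of $Q'_{\mathcal C}$ is consistent but not needed: $F_b(e)=\eta_b^{*}\big(G_b(e)\big)$ from your first step already puts both sides in the same set of dep-arrows.
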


\begin{prf} 
We already know, that the underlying map of split fibrations \( (\tilde \eta, \eta) \) gives rise to a transformation of indexed functors.
Hence, it remains to confirm the remaining property, that 
\[
	G\lambda(\Phi) \circ \eta_c = F^\lambda(\Phi).
\]
But \( G_\lambda(\Phi) \circ \eta_c := (\eta_c)^{(p'\circ Q')*}\big(G_\lambda(\Phi)\big) \) and thus, we calculate
\begin{align*} 
	(\eta_c)^{(p'\circ Q')*}\big(G_\lambda(\Phi)\big) &= \hat F(\Phi) &&\text{as }\hat\eta_\Phi \colon \hat F(\Phi) \to \hat G(\Phi)
	\\
							   &= F_\lambda(\Phi) &&\text{by definition}.\qedhere
\end{align*}
\end{prf}

The functoriality of the assignment ``transformation of sas-maps'' \( \mapsto  \) ``dep-natural transformation'' is again immediate from the definition (similarly for the reverse direction).
Thus these assignment routines determine two 2-functors between categories we now define.

\begin{defn}[Category of sas-towers]
	We define the 2-category \( \sas \) of sas-towers as follows:
	\begin{itemize}
		\item Its objects (0-cells) are sas-towers as in Definition \ref{sastowerdef},
		\item its 1-maps are maps of sas-towers as in Definition \ref{sasmapsdef}.
		\item its 2-maps are transformations of sas-maps as in Definition \ref{sastrafodef}.
	\end{itemize}
	The composition of 1-cells and the vertical and  horizontal composition of 2-maps is defined component-wise.
\end{defn}

\begin{lem} 
	We obtain two 2-functors
	\begin{align*} 
		(Q_{\spacer},p_{\spacer}) \colon \depC \to \sas,&& I &\mapsto (Q,p) &&\text{as in Lemma \ref{2depAmbiLem::1}}
		\\
								       &&F  &\mapsto \sasmap F &&\text{as in Lemma \ref{2depfuntomap}}
		\\
								       &&\eta &\mapsto \sasmap \eta &&\text{as in Lemma \ref{2depnattotrafo}}
		\\
		\C C_{\spacer} \colon \sas \to \depC,&& (Q,p) &\mapsto I &&\text{as in Lemma \ref{sasto2dep}}
		\\
							 &&	\sasmap F &\mapsto F &&\text{as in Lemma \ref{sasmapto2depfun}}
		\\
							 &&\sasmap \eta &\mapsto \eta &&\text{as in Lemma \ref{trafoto2depnat}}		
	\end{align*}
	
\end{lem}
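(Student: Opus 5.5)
The plan is to check the 2-functor axioms for each assignment. The assignments on 0-, 1- and 2-cells are already given by Lemmas \ref{2depAmbiLem::1}, \ref{2depfuntomap}, \ref{2depnattotrafo} and by Lemmas \ref{sasto2dep}, \ref{sasmapto2depfun}, \ref{trafoto2depnat}. What remains is the following:
\begin{itemize}
\item preservation of identity 1-cells and of composition of 1-cells;
\item preservation of identity 2-cells;
\item preservation of vertical composition of 2-cells;
\item preservation of horizontal composition (whiskering) of 2-cells.
\end{itemize}
Since both $\depC$ and $\sas$ compose componentwise, all of this should reduce to componentwise identities. The parts coming from indexed categories versus split fibrations may be taken from the standard 2-equivalence $\Fib_{\spl}\simeq\mathsf{IC}$.

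For $(Q_{\spacer},p_{\spacer})$ I first dispatch the pair $(\tilde F,F)$, which is exactly the standard construction and so strictly functorial. The functor $\hat F$ sends $\Phi\mapsto F^\lambda(\Phi)$ and $(f,\eta)\mapsto(F(f),F_c(\eta))$; I read the formula ``$\hat F(f,\eta)=(f,\eta)$'' in Lemma \ref{2depfuntomap} this way, since it must lie over $\tilde F$. For a composite $(G,G_{\spacer},G^{\spacer})\circ(F,F_{\spacer},F^{\spacer})$, the dep-component is $G^{F_c(\lambda)}\circ F^\lambda$, so $\widehat{G\circ F}=\hat G\circ\hat F$ on objects. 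On morphisms, equality follows because $\tilde{}$ is functorial and $\hat F$ is determined by $\tilde F$ on arrows: morphisms of $\C G$ are just morphisms of $\C E$ satisfying a property. Identities go to identities for the same reason.

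On 2-cells, $\hat\eta_\Phi=(\eta_c,1)$ is determined by $\tilde\eta_{Q(\Phi)}$, because $Q$ is faithful on the $\C G$ of Lemma \ref{2depAmbiLem::1}. Vertical and horizontal composites of $\hat\eta$'s are therefore computed inside $\C E$, where they agree by the standard result.

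For $\C C_{\spacer}$ the argument is shorter. A sas-map $(\hat F,\tilde F,F)$ is sent to $(F,\tilde F|_{\text{fibres}},\hat F|_{\text{fibres}})$, and restriction to fibres plainly commutes with composition and identities. A transformation $(\hat\eta,\tilde\eta,\eta)$ is sent to its underlying $\eta$ (with $\tilde\eta$ recovering the indexed-category data), and $\eta$ composes componentwise in both settings.

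The one genuine check is faithfulness of $Q$, which is what makes the $\hat{\ }$-data redundant on arrows. I would verify it for sas-towers coming from Lemma \ref{2depAmbiLem::1} directly from the definition of $\C G$. For the other direction, faithfulness is not needed, since only object-level data and $\eta$ are used.

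The step I expect to be the main obstacle is making sure the composite of two maps satisfying the sas-map conditions (discrete fibration maps on $\mathcal V$ and $\mathcal C$) is again such a map. This is needed so that the composition in $\sas$ is well-defined. I would prove it by noting that $F$ preserves vertical and cartesian arrows, being a map of split fibrations, and that the restrictions of Corollary \ref{Ambifib2depCor::1} are unique and hence compose.
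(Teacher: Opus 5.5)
Your proposal is correct and is the paper's argument made explicit. The paper only asserts that functoriality on 1- and 2-cells is immediate in both directions; your componentwise reduction to the standard $\Fib_{\spl}\simeq\mathsf{IC}$ correspondence, using faithfulness of $Q$ on the $\C G$ of Lemma~\ref{2depAmbiLem::1} so that $\hat F$ and $\hat\eta$ are determined by $\tilde F$ and $\tilde\eta$, is exactly why it is immediate. Your extra check that sas-maps compose concerns well-definedness of $\sas$ rather than the lemma itself, and there it is $\tilde F$ (not $F$) that must preserve $\mathcal V$ and $\mathcal C$ for Corollary~\ref{Ambifib2depCor::1} to apply.
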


\begin{prf} 
	As we remarked, the functoriality with respect to 1-maps and 2-maps is immediate in both cases.
\end{prf}

\begin{thm} 
	The two 2-functors of the previous lemma establish a 2-equivalence
	\[
		\sas \simeq \depC.
	\]
\end{thm}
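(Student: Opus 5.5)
We show that the composites $\C C_{\spacer}\circ(Q_{\spacer},p_{\spacer})$ and $(Q_{\spacer},p_{\spacer})\circ\C C_{\spacer}$ are 2-naturally isomorphic to the respective identity 2-functors. The base layer of each tower is handled by the 2-equivalence $\Fib_{\spl}\simeq\mathsf{IC}$ stated earlier. So the work is to extend the unit and counit of that equivalence by a component on the dep-layer, and to check that these extensions are compatible with the dep-operations and with the sas-conditions.

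\emph{The composite $\depC\to\depC$.} Start with a dep-category $I$ and pass through Lemma~\ref{2depAmbiLem::1} and then Lemma~\ref{sasto2dep}.
\begin{itemize}
\item The new dep-arrows over $(c,\lambda)$ are the fibre $Q^{-1}(c,\lambda)$, which by construction is exactly $\dHom(c,\lambda)$.
\item Precomposition with $f$ is computed by the $q$-lift $\overline f^q(\Phi)=(f,1_\lambda)\colon\Phi\circ f\to\Phi$, whose domain is $\Phi\circ f$.
\item Postcomposition with a vertical $\eta$ is computed by the $Q_\mathcal V$-lift $(1_c,\eta)\colon\Phi\to\eta\circ\Phi$, whose codomain is $\eta\circ\Phi$.
\end{itemize}
Hence the dep-structure is recovered on the nose. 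The only non-identity part is the comparison $I\cong I'$ on the underlying indexed category, which comes from the known equivalence. Since the dep-arrow sets are literally equal, $F^{\spacer}$ can be taken to be identity maps, and (dep-\ref*{depeq::1}) and (dep-\ref*{depeq::2}) for the comparison reduce to the identities above. Naturality in dep-functors and dep-natural transformations is then a direct unfolding of Lemmas~\ref{2depfuntomap}--\ref{trafoto2depnat}.

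\emph{The composite $\sas\to\sas$.} Start with a sas-tower $(Q\colon\C G\to\C E,\,p)$. Write $\C E'\to\C B$ for the split fibration rebuilt from the indexed category, which carries an isomorphism $\tilde\epsilon\colon\C E'\to\C E$ of split fibrations over $\C B$. Write $\C G'$ for the category whose objects are pairs $(e,\Phi)$ with $Q(\Phi)=e$ and whose morphisms are arrows $g$ of $\C E$ satisfying $g\circ\Phi=\Psi\circ p(g)$, with both composites computed through the lifts. Define $\hat\epsilon\colon\C G'\to\C G$ by $\Phi\mapsto\Phi$ on objects.

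On morphisms, factor $g=g_{\mathcal C}\circ g_{\mathcal V}$ with $g_{\mathcal V}$ vertical and $g_{\mathcal C}$ cartesian. Send $g$ to the composite of the unique opcartesian $Q_\mathcal V$-lift of $g_{\mathcal V}$ starting at $\Phi$ and the unique $Q_\mathcal C$-lift of $g_{\mathcal C}$ ending at $\Psi$. These two lifts meet in the middle precisely because of the defining equation $g\circ\Phi=\Psi\circ p(g)$, together with (sas-\ref*{sastowerdef::item3}), which identifies the $q$-lift with the $Q$-lift of the $p$-cartesian arrow.

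To see that $\hat\epsilon$ is full and faithful, take any $h\colon\Phi\to\Psi$ in $\C G$. Factor $Q(h)$ in $\C E$ and lift the factorisation to $\C G$ using the discrete ambifibration: since $Q_\mathcal V$ is a \emph{discrete} opfibration and $Q_\mathcal C$ a \emph{discrete} fibration, lifts of vertical and of cartesian arrows with a fixed endpoint are unique. It follows that $h$ equals the composite of the two lifts of the factors of $Q(h)$. So $h$ is determined by $Q(h)$, which gives faithfulness, and $Q(h)$ satisfies the defining equation, which gives fullness.

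Verifying this decomposition of an arbitrary $h$ is where I expect the real difficulty. The plan is to use the uniqueness of discrete lifts, applied to the middle object $g_{\mathcal V*}\Phi$, to force the two lifts to compose to $h$.

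Once that is settled, $(\hat\epsilon,\tilde\epsilon,1)$ is an isomorphism of sas-towers: it preserves the splittings by (sas-\ref*{sastowerdef::item3}) and (sas-\ref*{sastowerdef::item4}), and it commutes with the restricted discrete fibrations by Corollary~\ref{Ambifib2depCor::1}. Naturality in sas-maps holds because sas-maps preserve all of the lifts by Definition~\ref{sasmapsdef}. The compatibility with 2-cells follows from the cartesianness condition on transformations in Definition~\ref{sastrafodef}, which fixes $\hat\eta_\Phi=\overline{\eta_b}^q(\Phi)$.
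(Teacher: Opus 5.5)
\textbf{There is a genuine gap, at the step you yourself flag as the crux.}

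Your overall plan matches the paper's one-line argument (``same sas-tower up to renaming''). On the dep side the composite is the identity up to renaming. On the sas side you compare $\C G$ with the rebuilt $\C G'$ via a bijective-on-objects $\hat\epsilon$. You have also located the one point with real content: every $h\colon\Phi\to\Psi$ in $\C G$ must be determined by $Q(h)$ and factor as a $Q_{\mathcal V}$-lift followed by a $Q_{\mathcal C}$-lift.

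The mechanism you propose for this does not work. The discrete ambifibration condition only constrains morphisms of $\C G$ lying over $\mathcal V$ or over $\mathcal C$. A general $h$ lies over neither, so uniqueness of discrete lifts says nothing about it.
\begin{itemize}
\item Concluding that the composite of the two lifts equals $h$ needs $Q$ to be faithful, which is what you are trying to prove.
\item Even the claim that $Q(h)$ satisfies your defining equation needs $h$ to pass through the middle object, and the ambifibration property does not provide that.
\end{itemize}
The property alone cannot suffice. Take $\C B=\{0\xrightarrow{a}1\}$ with $I$ constant at the walking arrow $u\colon x\to y$. Adjoin to $\C G:=\C E$ a second morphism $(0,x)\to(1,y)$ parallel to $(a,u)$; it composes only with identities. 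Then $Q_{\mathcal V}$ and $Q_{\mathcal C}$ are still isomorphisms, yet $Q$ is not faithful.

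\textbf{The repair uses the full strength of (sas-3).} You only use (sas-3) to match the middle objects when defining $\hat\epsilon$. What you actually need is that the chosen lift $k=\overline{f}^{q}(\Psi)$, with $f=q(h)$, is $q$-cartesian in all of $\C G$; by (sas-3) this $k$ is the $Q_{\mathcal C}$-lift of $\overline{f}^{p}(Q\Psi)$. The argument then runs as follows.
\begin{itemize}
\item By $q$-cartesianness, $h=k\circ h'$ for a unique $h'$ with $q(h')=1_{q(\Phi)}$.
\item $Q(h')$ lies over an identity, so $h'$ is a morphism of $\C G_{\mathcal V}$. It is therefore the unique $Q_{\mathcal V}$-lift of $Q(h')$ at $\Phi$.
\item Since $Q(k)$ is $p$-cartesian, $Q(h')$ is forced to be the vertical part of $Q(h)$.
\end{itemize}
This yields the following at once.
\begin{itemize}
\item $Q$ is faithful.
\item Your defining equation holds, since $\mathrm{cod}(h')=\mathrm{dom}(k)$.
\item Every $h$ lies in the image of $\hat\epsilon$.
\item $\hat\epsilon$ preserves composition, which you did not check: it follows from $Q\circ\hat\epsilon=\tilde\epsilon\circ Q'$ together with faithfulness of $Q$.
\end{itemize}
In the example above, (sas-3) is exactly what breaks: no morphism into $(1,y)$ over $a$ is $q$-cartesian. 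With this repair, the rest of your argument goes through.
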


\begin{prf} 
	It is immediate that mapping a dep-category to the associated sas-tower, and that tower to its associated dep-category yields the same dep-category (up to renaming).
	An analogous result holds for the dep-functors and dep-natural transformations.

	Conversely, if we are given a sas-tower applying \( \C C_{\spacer} \) first and then \( (Q_{\spacer},p_{\spacer}) \) yields the same sas-tower (up to renaming), similar for sas-maps and transformations of such maps.
\end{prf}

\chapter{Dependent arrows and Sigma-objects}
\label{sec::depsig}

\begin{defn}
	Let \(I \colon \C C\opp \to \Cat \) be an indexed category with Sigma-objects.
	A \emph{(dep,$\Sigma$)-structure} on \( I \) consists of a dep-structure on the underlying indexed category in the sense of Definition~\ref{def: depstruc1} together with
	\begin{itemize}
	\item a \emph{second-projection-arrow} \( \pr_2^\lambda \in \dHom\Big(\Grothendieck{c}{\lambda},I(\pr_1^\lambda)(\lambda)\Big) \) for each \( c \in \C C, \lambda \in I(c) \), such that 
		\begin{enumerate}[leftmargin = 2.5em, itemindent = 1em]
		\renewcommand{\labelenumi}{(d$\Sigma$-\theenumi)}
		\item\label{2depSigmadefn::cd1} for every \( f \colon c' \to c \) the equality
	\(
		(\pr_2^\lambda)\circ \Big(\Grothendieck{\lambda}{f}\Big) = \pr_2^{I(f)(\lambda)}	
	\)
	holds.
\item \label{2depSigmadefn::cd2}	
	For every \( \mu \colon \lambda \to \eta \) the equality
		\[
			I(\pr_1^\lambda)(\eta) \circ \pr_2^{\lambda} = \pr_2^\eta\circ \Big(\Grothendieck{\lambda,\eta}{\mu}\Big)
		\]
		holds.
		\end{enumerate}
	\end{itemize}
An indexed category with Sigma-objects and a dep-structure is called a \emph{(dep, \( \Sigma \))-category}.
\end{defn}
\begin{exm}\label{exm::ring3} 
	The dep-structure on the indexed category of a ring from Examples~\ref{exm::ring1} and \ref{exm::ring2} can be extended to a (dep,$\Sigma$)-structure by setting \( \pr_2^{(a,b)} = \langle a-b\rangle \), the ideal generated by \( a-b \). For a proof that this constitutes a (dep,$\Sigma$)-structure see~\cite{ehrhardt2depCategories2024}. 
\end{exm}

The following theorem is stated in \cite[Thm.~3.5.8]{ehrhardt2depCategories2024}, and is a straightforward generalisation of \cite[Thm.~5.4]{petrakisCategoriesDependentArrows2023}.
It will be of importance in Section~\ref{sec: Comparison}.

\begin{thm} 
	Every indexed category \( I \colon \C C\opp \to \Cat \) with Sigma-objects has an induced dep-structure of the following form:
	\begin{itemize}
		\item The dependent arrows in \( \dHom(c,\lambda) \) are arrows \( \Phi \colon c \to \Grothendieck{c}{\lambda} \) in \( \C C \) such that \( \pr_1^\lambda \circ \Phi = 1_c \).
		\item The precomposition of such a \( \Phi \) with an arrow \( f \colon c' \to c \) is defined via a pullback, as follows:
			\[ \begin{tikzcd}[execute at end picture={
			\rueckzug{2-2}{2-3}{3-2}{};
		}]
				c' \ar[ddr,"1_{c'}"{swap, near start},to path = {(\tikztostart.south) |- (\tikztotarget.west) \tikztonodes}, rounded corners]
				\ar[drr,"\Phi \circ f"{near start},to path = {(\tikztostart.east) -| (\tikztotarget.north) \tikztonodes}, rounded corners]
				\ar[dr,"\Phi \circ f"{description}, dotted]
				\\
				& \Grothendieck{c'}{I(f)(\lambda)} \ar[d,"\pr_1^{I(f)(\lambda)}"']  
					\ar[r,"\Grothendieck{\lambda}{f}"] 
				& \Grothendieck{c}{\lambda} \ar[d,"\pr_1^\lambda"]
				\\
				&c' \ar[r,"f"'] & c.
			\end{tikzcd} \]
			Note that \( \Phi \circ f \) on the top-most arrow is only the ordinary composition of arrows in \( \C C \), which is distinct from the precomposition \( \Phi \circ f \)!
		\item The postcomposition of \( \Phi \) with \( \mu \colon \lambda \to \eta \) is defined via 	
			\[
				\mu \circ \Phi := \Grothendieck{\lambda,\eta}{\mu} \circ \Phi.
			\]
			Again, the composition on the right is the ordinary composition in \( \C C \).
		\item The second-projection-arrow of \( \lambda  \) is defined via the pullback
			\[ \begin{tikzcd}[execute at end picture={
			\rueckzug{2-2}{2-3}{3-2}{};
		}]
				\Grothendieck{c}{\lambda}
				\ar[ddr,"1_{\Grothendieck{c}{\lambda}}"{swap, near start},to path = {(\tikztostart.south) |- (\tikztotarget.west) \tikztonodes}, rounded corners]
				\ar[drr,"1_{\Grothendieck{c}{\lambda}}"{near start},to path = {(\tikztostart.east) -| (\tikztotarget.north) \tikztonodes}, rounded corners]
				\ar[dr,"\pr_2^\lambda"{description}, dotted]
				\\
				& \Grothendieck{\Grothendieck{c}{\lambda}}{I(\pr_1^\lambda)(\lambda)} \ar[d,"\pr_1^{I(\pr_1^\lambda)(\lambda)}"'] 
				\ar[r,"\Grothendieck{\lambda}{\pr_1^\lambda}"]
				&\Grothendieck{c}{\lambda} \ar[d,"\pr_1^\lambda"]
				\\
				& \Grothendieck{c}{\lambda} \ar[r,"\pr_1^\lambda"'] & c.
			\end{tikzcd} \]
	\end{itemize}
\end{thm}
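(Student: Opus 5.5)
The claim is a direct verification: the data listed in the theorem satisfy the axioms of Definition~\ref{def: depstruc1} and the conditions (d$\Sigma$-1), (d$\Sigma$-2). The basic tool is uniqueness of mediating maps into the pullback squares of ($\Sigma$1). Two arrows into $\Grothendieck{c'}{I(f)(\lambda)}$ are equal as soon as their composites with $\pr_1^{I(f)(\lambda)}$ and with $\Grothendieck{\lambda}{f}$ agree. So every equation will be proved by computing both legs.

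\textbf{Well-definedness.}
\begin{itemize}
	\item $\Phi\circ f$ is a section of $\pr_1^{I(f)(\lambda)}$ by construction.
	\item $\mu\circ\Phi$ is a section of $\pr_1^\eta$, because the triangle in ($\Sigma$2) gives $\pr_1^\eta\circ\Grothendieck{\lambda,\eta}{\mu}\circ\Phi=\pr_1^\lambda\circ\Phi=1_c$.
	\item $\pr_2^\lambda$ is a section of $\pr_1^{I(\pr_1^\lambda)(\lambda)}$, again by construction.
\end{itemize}

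\textbf{Precomposition axioms.}
\begin{itemize}
	\item (dep-1.2): $\Phi\circ 1_c=\Phi$ follows from $\Grothendieck{\lambda}{1_c}=1$ in ($\Sigma$1.1), using functoriality $I(1_c)=1$.
	\item (dep-1.1): take $f\colon c''\to c'$ and $g\colon c'\to c$. Both $(\Phi\circ g)\circ f$ and $\Phi\circ(g\circ f)$ have first leg $1_{c''}$. For the second leg, the compatibility law ($\Sigma$1.1) gives $\Grothendieck{\lambda}{g\circ f}\circ(\Phi\circ(g\circ f))=\Phi\circ g\circ f$. It also gives $\Grothendieck{\lambda}{g}\circ\Grothendieck{I(g)(\lambda)}{f}\circ((\Phi\circ g)\circ f)=\Grothendieck{\lambda}{g}\circ(\Phi\circ g)\circ f=\Phi\circ g\circ f$. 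This uses that $I$ is a strict functor, so $I(g\circ f)(\lambda)=I(f)(I(g)(\lambda))$.
\end{itemize}

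\textbf{Postcomposition axioms.}
\begin{itemize}
	\item (dep-2.1) and (dep-2.2) are immediate from ($\Sigma$2.1) and ($\Sigma$2.2).
	\item (dep-2.3) is the first real point. We need $(\mu\circ\Phi)\circ f=I(f)(\mu)\circ(\Phi\circ f)$, and comparing second legs requires the naturality identity
\[
\Grothendieck{\eta}{f}\circ\Grothendieck{I(f)(\lambda),I(f)(\eta)}{I(f)(\mu)}=\Grothendieck{\lambda,\eta}{\mu}\circ\Grothendieck{\lambda}{f}.
\]
This identity is not among the listed axioms. I would obtain it from the 2-equivalence of Lemma~\ref{lem: equivSigmaComprehension}: the Sigma-objects assemble into a functor $\Grothendieck{}{}\colon\int I\to\C C$, so the two composites are the images of the equal arrows $(f,1)\circ(1,I(f)(\mu))=(1,\mu)\circ(f,1)$. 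Alternatively, I would note that this functoriality is implicitly part of the intended definition.
	\item Granting the identity, both sides of (dep-2.3) have first leg $1_{c'}$ and second leg $\Grothendieck{\lambda,\eta}{\mu}\circ\Phi\circ f$.
\end{itemize}

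\textbf{Conditions on $\pr_2$.}
\begin{itemize}
	\item (d$\Sigma$-1): compute $\pr_2^\lambda\circ\Grothendieck{\lambda}{f}$ by the pullback definition of precomposition, here along the map $\Grothendieck{\lambda}{f}$. Its second leg into $\Grothendieck{c}{\lambda}$ is $\pr_2^\lambda\circ\Grothendieck{\lambda}{f}$ composed with the top projection, which equals $\Grothendieck{\lambda}{f}$. The second leg of $\pr_2^{I(f)(\lambda)}$ reduces to the same arrow by ($\Sigma$1.1), applied to the two factorisations $f\circ\pr_1^{I(f)(\lambda)}=\pr_1^\lambda\circ\Grothendieck{\lambda}{f}$ of the same composite. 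Strict functoriality of $I$ identifies the relevant fibre objects.
	\item (d$\Sigma$-2): the same leg comparison works, using the naturality identity above with $f=\pr_1^\lambda$, together with the triangle of ($\Sigma$2).
\end{itemize}

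The main obstacle is the naturality identity. Every other step is a routine uniqueness-of-pullback-map argument.
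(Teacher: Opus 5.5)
The paper contains no proof of this statement; it only cites \cite[Thm.~3.5.8]{ehrhardt2depCategories2024}, so there is no in-paper argument to compare with. Your direct verification by uniqueness of mediating maps into the ($\Sigma$1) pullbacks is the natural route, and the steps you actually carry out are correct. Well-definedness, (dep-1.1), (dep-1.2), (dep-2.1), (dep-2.2) and (d$\Sigma$-1) use only ($\Sigma$1.1), ($\Sigma$2.1), ($\Sigma$2.2), the triangle of ($\Sigma$2) and strict functoriality of $I$. Both (dep-2.3) and (d$\Sigma$-2) reduce to the naturality identity you isolate.

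The gap is in how you propose to obtain that identity: appealing to Lemma~\ref{lem: equivSigmaComprehension} is circular. The comprehension functor induced by the Sigma-objects sends $(f,\nu)$, with $\nu\colon\lambda'\to I(f)(\lambda)$, to $\Grothendieck{\lambda}{f}\circ\Grothendieck{\lambda',I(f)(\lambda)}{\nu}$. Its preservation of the composite $(1,\mu)\circ(f,1)=(f,I(f)(\mu))$ is literally your identity, so that lemma, which is stated without proof, already presupposes it.

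In fact the identity is independent of ($\Sigma$1)--($\Sigma$2) as formulated in this paper, and without it the statement is false. Here is a counterexample.
\begin{itemize}
\item Take $\C C=\Sets$ and $I$ constant at the category with a single non-identity arrow $\mu\colon\lambda\to\eta$, with every $I(f)$ the identity.
\item Put $\Grothendieck{X}{\lambda}=\Grothendieck{X}{\eta}=X\times 2$, with $\pr_1$ the projection and $\Grothendieck{\lambda}{f}=\Grothendieck{\eta}{f}=f\times 1_2$.
\item Let $\Grothendieck{\lambda,\eta}{\mu}$ be the swap $(x,i)\mapsto(x,1-i)$ when $X$ is a singleton, and the identity otherwise.
\end{itemize}
All listed axioms hold; ($\Sigma$2.1) is trivial because every composable pair contains an identity. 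Now let $f\colon\{*\}\to\{a,b\}$ pick out $a$, and let $\Phi(x)=(x,\varphi(x))$. Then $(\mu\circ\Phi)\circ f$ is $*\mapsto(*,\varphi(a))$, while $I(f)(\mu)\circ(\Phi\circ f)$ is $*\mapsto(*,1-\varphi(a))$, so (dep-2.3) fails.

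So the naturality identity must be imposed as an additional axiom on Sigma-objects; it is exactly the condition making the Sigma-objects a functor on $\int I$. Once it is stated explicitly as a hypothesis, your argument is a complete proof. Adopt your ``alternatively'' option as the actual resolution, rather than trying to derive the identity.
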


As we saw in Theorem~\ref{lem: equivSigmaComprehension} with Sigma-objects are biequivalent to split comprehension categories, that is adding Sigma-objects corresponds to adding a comprehension functor to the split fibration.
\[
	I\colon \C C\opp \to \Cat \textcolor{red}{+ \text{ Sigma-objects }} \textcolor{blue}{+ \text{ dep-structure}} \leftrightarrow 
	\begin{tikzcd} 
		|[color = blue]|	\C D \ar[r,"Q",blue] \ar[dr,"q"',bend right = 45,blue] & 	\C E\ar[d,"p"'] \ar[r,"P", red] & |[color = red]| \C C^\to \ar[dl,"\codom", red,bend left = 40]
\\
					 & \C C\end{tikzcd}
\]
It remains to examine to which additional structure on comprehension the dep-structures correspond. 
We will show that they correspond to \emph{higher comprehension categories}, which we will introduce in the remainder of this section.
Thus, the resulting picture is the following:
\[
	I\colon \C C\opp \to \Cat + \text{ Sigma-objects }\textcolor{red}{+ \text{(dep,$\Sigma$)-structure }} \leftrightarrow 
	\begin{tikzcd} 
		\C D \ar[r,"Q"', bend right = 30] \ar[r,Rightarrow,"\chi",shorten = 8pt,red]
			\ar[dr,"q"', bend right = 45]
			& \C E \ar[l,"\pr_2"', bend right = 30,red] 
			\ar[r,"P"] \ar[d,"p"]
			& \C C^{\to}
			\ar[dl,"\codom",bend left = 40]
			\\
			& \C C\end{tikzcd}
\]

Before we define higher comprehension categories we will define a property of natural transformations related to fibrations, and prove some results regarding this property, the importance of which will become apparent after the definition of higher comprehension categories.

\begin{defn}
	Let \( \C C, \C E, \C B \) be categories, \( p \colon \C E \to \C B \) a split fibration and \( F,G \colon \C C \to \C B, F',G' \colon \C C \to \C E  \) be functors connected by natural transformations \( \chi,\eta \), as in 
	\begin{equation} \begin{tikzcd}
		&[2em] \C E \ar[dd,"p"] 
		\\[-1em]
		\C C \ar[ur,"F'"{name = U}, bend left = 40] \ar[ur,"G'"'{name = V}, bend right = 0] 
		\ar[dr,"F"{name = W}, bend left = 0] \ar[dr,"G"'{name = S}, bend right = 40]
		\\[-1em]
		& \C B.
		\ar[from = U, to = V, "\chi", Rightarrow, shorten = 3pt]
		\ar[from = W, to = S, "\eta", Rightarrow, shorten = 3pt]
	\end{tikzcd}\label{NattrafoFibLift} \end{equation}
	We then say that \( \chi \) is the \emph{p-lift of} \( \eta \) if the above diagram commutes on the level of functors, and for all objects \( c \) of \( \C C \) we have that
	\[
		\overline{\eta_c}\big(G(c)\big) = \chi_c.
	\]
	In this case we write \( \overline{\eta} = \chi \) or \( \overline{\eta}^p = \chi\).
\end{defn}

First we show that the notation \( \overline{\eta} = \chi \) is actually justified, that is, if both \( \overline{\eta} = \chi \) and \( \overline{\eta} = \xi \), then \( \chi = \xi \).

\begin{lem} 
Given categories, \( \C C, \C E, \C B \), functors \( F,G \colon \C C \to \C B, F',G' \colon \C C \to \C E \), natural transformation \( \chi \colon F' \To G', \eta \colon F \To G \) and a fibration \( p \colon \C E \to \C B \) as above be given.
If \( \overline{\eta} = \chi \) and \( \overline{\eta} = \xi \), then 
 \( \chi = \xi \).
\end{lem}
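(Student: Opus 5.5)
The plan is to argue componentwise, using that the components of a natural transformation determine it. Suppose \( \overline{\eta} = \chi \) and \( \overline{\eta} = \xi \). Both are then natural transformations \( F' \To G' \) in the situation of \eqref{NattrafoFibLift}, since the definition of \( p \)-lift fixes the boundary functors \( F', G', F, G \). To prove \( \chi = \xi \) it suffices to show \( \chi_c = \xi_c \) for every object \( c \) of \( \C C \).

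First fix an object \( c \). By the definition of \( p \)-lift applied to \( \chi \), we get \( \chi_c = \overline{\eta_c}\big(G(c)\big) \). Applied to \( \xi \), the same definition gives \( \xi_c = \overline{\eta_c}\big(G(c)\big) \). Here \( \overline{\eta_c}(-) \) denotes the lift with respect to the chosen splitting of the split fibration \( p \). This is a single determined arrow of \( \C E \), fixed once the arrow \( \eta_c \) of \( \C B \) and the object over its codomain are fixed.

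Both right-hand sides are therefore literally the same arrow, so \( \chi_c = \xi_c \). Since this holds for every \( c \), we conclude \( \chi = \xi \).

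There is essentially no obstacle. The one point to check is that the cartesian lift is a chosen one: because \( p \) is split, \( \overline{\eta_c}(-) \) is a function of its arguments, not merely unique up to vertical isomorphism. The only bookkeeping is to confirm that the object being lifted at is \( G'(c) \), which lies over \( G(c) \) because \( p \circ G' = G \) by the commutativity requirement. The statement's shorthand \( G(c) \) is read this way in both instances, so the two lifts are taken of the same arrow at the same object. Even without a splitting, the same argument gives \( \chi_c = \xi_c \) up to the choice of cleavage. Since the paper works with split fibrations throughout, the equality is strict.
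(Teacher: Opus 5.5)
Your proposal is correct and is essentially the paper's proof: both argue componentwise that \( \chi_c \) and \( \xi_c \) are the same chosen lift \( \overline{\eta_c}\big(G'(c)\big) \), and you are right to read the paper's \( G(c) \) as \( G'(c) \). One small precision: a fixed cleavage, not the splitting, is what makes this lift a determined arrow, so your remark about the non-split case is unnecessary but does no harm.
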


\begin{prf} 
	This is immediate, as for all \( c \) in \( \C C \) we can compute
	\[\xi_e = \overline{\eta_c}G(c) = \chi_c. \qedhere\]
\end{prf}

\paragraph{Some notation for specific lifts}
When working with fibrations \( p \colon \C E \to \C B \) we already saw that the precomposition of a vertical arrow \( \eta \colon e' \to e \) in \( \C E \) with an arrow \( f \) of the base category involves invoking the universal property as such:
\[ \begin{tikzcd}
	f^*(e') \ar[d,dotted,"h"] \ar[r,"\overline{f}(e')"] & e' \ar[d,"\eta"] 
	\\
	f^*(e) \ar[r,"\overline{f}(e)"'] & e,	
\end{tikzcd} \]
where \( h \) is then \( \eta \bullet f \). 
From now on we will always use \( \eta \bullet f \) for the arrow obtained in this manner.

Another issue left regards arrows vertical with respect to a fibration.
As discussed earlier, the factorisation system obtained from a fibration distinguishes between arrow which are vertical, in the sense that they map to an isomorphism under \( p \), and those that are cartesian, that is they are isomorphic to a chosen \( p \)-cartesian lift.
However, when discussing fibrations stemming from indexed categories this distinction is too coarse for us, it merely distinguishes morphisms \( (f,\eta) \) where either \( f \) is an isomorphism or \( \eta \) is. 
What we want, however, is that either \( f \) is an identity or \( \eta \) is.
\begin{itemize}
	\item\index{p-prone part@$p$-prone part! of a morphism|hyperit} 
For \( \eta \), this can obtained easily, just consider \( \overline{p(f,\eta))}^p(e) \), which will have \( \eta \) ``removed''. 
Similarly, this works for any morphism $f\colon e' \to e$ in the total category of a (split) fibration \( p \colon \C E \to \C B \), taking \( \overline{p(f)}(e) \).
We will call this the \emph{$p$-prone part} of \( f \)---a slight abuse of the terminology coined by Johnstone and Taylor in \cite[265ff]{johnstoneSketchesElephantTopos2002} and \cite[Definition~9.2.6]{taylorPracticalFoundationsMathematics1999} respectively---and denote it as \( \prone f \).
\item\index{true vertical!of a morphism|hyperit}
For \( f \) we need to work more:
considering the diagram
\[ \begin{tikzcd}
	\lambda \ar[dr,"{(f,\eta)}"]  \ar[d,"h"', dotted]
	\\
f^*(\mu) \ar[r,"\overline{f}(\mu)"'] & \mu,
\end{tikzcd} \]
we can see that \( h \) must be \( \eta \).
Inspired from this, for an arbitrary fibration \( p \colon \C E \to \C B \) and a vertical morphism \( f \colon e' \to e \), we define through the same universal property
\[ \begin{tikzcd}
	e' \ar[dr,"f"] \ar[d,"\tv{f}"'] 
	\\
	\big(p(f)\big)^*(e) \ar[r,"\overline{p(f)}(e)"'] & e,
\end{tikzcd} \]
the \emph{true vertical} \( \tv f \) of \( f \).
It is immediate from this definition that \( p(\tv f) = 1_{p(e')} \), as
\[
	p(f) = p\big(\overline{p(f)}(e) \circ \tv f\big) = p\big(\overline{p(f)}(e)\big) \circ p\big(\tv f\big) = p(f) \circ p\big( \tv f\big),
\]
hence, as \(p( f) \) is invertible, the claim follows.
\end{itemize}
Note however, that while any morphism in the total category of a fibration can be decomposed into \( \prone f \circ \tv f \), this does not always yield the desired outcome, as for example \( \tv f \) may not lie over the identity, since the proof of this fact rested on \( p(f) \) being invertible!

The following lemma will be needed in order for the definition of higher comprehension categories to be well-defined.

\begin{lem} 
	If \( P \colon \C E \to \C B^{\to} \) is a comprehension category, then \( f \bullet P_0(f) \) is vertical if \( f \) is.
\end{lem}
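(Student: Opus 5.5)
The plan is to unfold the definition of \( f \bullet P_0(f) \) and compute its image under \( p \) directly. For \( f \colon e' \to e \) in \( \C E \), the arrow \( P(f) \) is a commutative square in \( \C B^\to \) from \( P(e') \) to \( P(e) \). Its top component is \( P_0(f) \colon P_0(e') \to P_0(e) \) and its bottom component is \( \codom(P(f)) = p(f) \), so
\[
	P(e) \circ P_0(f) = p(f) \circ P(e').
\]
By the construction in the paragraph on notation for specific lifts, \( f \bullet P_0(f) \) is the unique arrow induced by the universal property of the chosen \( p \)-cartesian lift \( \overline{g}(e) \). Here \( g \) is the base arrow obtained from \( P_0(f) \) by composing with the comprehension projections so that it lands in \( p(e) \). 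The first step is to fix these domains and codomains precisely, so that \( f \bullet P_0(f) \colon g'^*(e') \to g^*(e) \) is the factorisation of \( f \circ \overline{g'}(e') \) through \( \overline{g}(e) \).

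Next, apply \( p \) to the defining equation \( \overline{g}(e) \circ (f \bullet P_0(f)) = f \circ \overline{g'}(e') \). The chosen lifts satisfy \( p(\overline{g}(e)) = g \) and \( p(\overline{g'}(e')) = g' \), so
\[
	g \circ p\big(f \bullet P_0(f)\big) = p(f) \circ g'.
\]
Using the commutative square above and the fact that \( p(f) \) is invertible (as \( f \) is vertical), this determines \( h := p(f \bullet P_0(f)) \). Concretely, \( h \) should be the arrow obtained by transporting \( P_0(f) \) along the invertible \( p(f) \), in exactly the way \( \tv f \) was handled earlier. The goal is an inverse for \( h \), namely the corresponding arrow built from \( p(f)^{-1} \), together with a check that both composites are identities.

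The main obstacle is this invertibility step. Since \( g \) need not be monic, the identity \( g \circ h = p(f) \circ g' \) alone does not force \( h \) to be an isomorphism. I would therefore first use (CC\textsubscript{2}): \( P \) sends the cartesian lifts \( \overline{g}(e) \), \( \overline{g'}(e') \) to pullback squares in \( \C B \). I would then compute \( h \) as the arrow induced between two pullbacks of \( P(e) \) and \( P(e') \), which are related by the square \( P(f) \) with invertible bottom \( p(f) \). The inverse is then induced symmetrically by the pullback property using \( p(f)^{-1} \), and uniqueness of induced maps into pullbacks gives that both composites are identities. Hence \( h \) is an isomorphism, so \( f \bullet P_0(f) \) is vertical.

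If the bookkeeping of domains turns out different from what I expect, the same template still applies: identify \( p \) of the induced arrow, then show it is invertible via uniqueness in the relevant pullbacks.
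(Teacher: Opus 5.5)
Your argument breaks down at exactly the step you flag as the main obstacle, and that step cannot be repaired.

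Fix the bookkeeping first. The arrow \( f \bullet P_0(f) \colon P(e')^*(e') \to P(e)^*(e) \) is the factorisation of \( f \circ \overline{P(e')}(e') \) through \( \overline{P(e)}(e) \) \emph{over the base arrow} \( P_0(f) \). This is admissible because \( P(e)\circ P_0(f) = p(f)\circ P(e') \). So \( h = p\big(f\bullet P_0(f)\big) = P_0(f) \) by construction; nothing is ``determined'' by \( g\circ h = p(f)\circ g' \). The whole lemma therefore amounts to the invertibility of \( P_0(f) \).

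The pullback squares that (CC\textsubscript{2}) supplies for \( \overline{P(e)}(e) \) and \( \overline{P(e')}(e') \) do not help. They induce arrows between \( P_0\big(P(e')^*(e')\big) \) and \( P_0\big(P(e)^*(e)\big) \), so they concern \( P_0 \) of your arrow, not \( h \). The square relating \( P(e') \) to \( P(e) \) is \( P(f) \). (CC\textsubscript{2}) says nothing about it, because a vertical \( f \) is not cartesian. Hence there is no way to turn \( p(f)^{-1} \) into a map \( P_0(e)\to P_0(e') \).

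In fact the claim is false as stated. Take the split comprehension category of set-indexed families over \( \Sets \), with \( P(c,A) \) the projection \( \sum_{x\in c}A_x \to c \). The vertical arrow \( f \) over \( 1_1 \) from the empty family to the singleton family has \( P_0(f) \colon \emptyset \to 1 \), so \( f\bullet P_0(f) \) is not vertical.

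The paper's proof makes the same unjustified move. ``A pullback of an isomorphism is an isomorphism'' applies only if \( P(f) \) is a pullback square. That holds when \( f \) is cartesian, or when \( p \) is fibred in groupoids (e.g.\ discrete). Under that extra hypothesis the paper's one-line argument is correct, and your detour through the lifts is unnecessary.

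Condition b) of (hCC-3) does not need such a lemma at all. The arrow \( \tv{f}\bullet P(e') \) lies over \( 1_{P_0(e')} \) by its very definition, hence is vertical.
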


\begin{prf} 
	First, we use that any pullback of an isomorphism is an isomorphism, so \( P_0(f) \) is an isomorphism.
	Then \( f \bullet P_0(f) \) is vertical because \( p\big(f \bullet P_0(f)\big) = P_0(f) \).
\end{prf}

\newcounter{hCCdefn}
\begin{defn}[Higher comprehension categories]
	\index{Higher comprehension category|hyperit}
	We define a \emph{higher comprehension category} to consist of 
	\begin{enumerate}[itemindent = 2.5em]
		\renewcommand{\labelenumi}{(hCC-{\theenumi})}
		\item a sas-tower (Definition~\ref{sastowerdef}) \( (Q \colon \C G \to \C E, p \colon \C E \to \C B) \),
		\item a comprehension category \( P \colon \C E \to \C B^{\to} \) such that \( \codom \circ P = p \) and 
		\item\label{hCCdefn::1} a functor \( \pr_2 \colon \C E \to \C G \) together with a natural transformation \( \chi \colon Q \circ \pr_2 \To \id_{\C E} \) such that
			\begin{enumerate}
			\renewcommand{\labelenumii}{\theenumii)}
			\item for all \( e \in \C E \) and all \( f \colon c' \to p(e) \) we have 
				\[
					\pr_2\big(\overline{f}^p(e)\big)= \overline{P_0\big(\overline{f}^p(e)\big)}^{q}(\pr_2(e)).
				\]
			\item for all \( e,e' \in \C E \) and all \( f \colon e' \to e \) vertical with respect to \( p \) we have
				\[
				\underline{\tv f \bullet P(e')}_Q\pr_2(e') = \overline{P_0(f)}^{q}\big(\pr_2(e)\big).
				\]
			\item \( \overline{P}^p = \chi \).
	\end{enumerate}
			\setcounter{hCCdefn}{\value{enumi}}
	\end{enumerate}
 We will denote this data as \( (Q,P,\chi) \).
\end{defn}

\begin{lem} \label{2depSigmatohCC}
	Every (dep,$\Sigma$)-category \( I \colon \C C\opp \to \Cat \) gives rise to a higher comprehension category \( (Q,P,\chi) \) in the following way:
	the underlying comprehension category \( P \colon \C E \to \C C^\to \) is defined from the underlying indexed category with Sigma-objects.
	\begin{itemize}
		\item 
	The category \( \C D \) has objects all dep-arrows \( \Phi \), and an arrow \( \Phi \to \Psi \) for \( \Phi \in \dHom(c,\lambda) \), \( \Psi \in \dHom(c',\eta) \) is a pair \( (f,\mu) \) of an arrow \( f \colon c \to c'  \) in  \( \C C \) and an arrow \( \mu \colon \lambda \to F(f)(\eta) \) such that \( \Psi \circ f = \mu \circ \Phi \).
\item The functor \( Q \) maps \( \Phi \in \dHom(c,\lambda) \) to \( \lambda \) and \( (f,\eta) \colon \Phi \to \Psi \) to \( (f,\eta) \).
\item The functor \( \pr_2 \) maps \( \lambda \) to \( \pr_2^\lambda \) and \( (f,\mu) \colon \lambda \to \eta \) to
	\[
		\Big( \big( \Grothendieck{\eta}{f}\big) \circ \Grothendieck{\lambda,\eta}{\mu}, F(\pr_1^\lambda)(\mu)\Big) \colon \pr_2^\lambda \to \pr_2^\eta.
		\]
	\item The natural transformation \( \chi \colon Q\circ \pr_2 \to \id_\C E\) is defined via \( \chi_\lambda := (\pr_1^\lambda , 1_{F(\pr_1^\lambda)(\lambda)}) \).
\end{itemize}
\end{lem}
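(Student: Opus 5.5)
Verifying that the data of Lemma~\ref{2depSigmatohCC} form a higher comprehension category splits into three parts, matching (hCC-1)--(hCC-3).

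For (hCC-1) and (hCC-2) I would invoke earlier results. Lemma~\ref{2depAmbiLem::1} applied to the underlying dep-category gives the sas-tower \( (Q,p) \). The category \( \C D \) here is the category \( \C G \) of that lemma, with morphisms written in the \( (f,\mu) \) form with \( \mu \colon \lambda \to I(f)(\eta) \). This rewriting is harmless, since in the split fibration of \( I \) an arrow \( (c,\lambda) \to (c',\eta) \) is exactly such a pair. Lemma~\ref{lem: equivSigmaComprehension} gives the split comprehension category \( P \). Explicitly \( P(c,\lambda) = \pr_1^\lambda \) and \( P(f,\mu) = (\Grothendieck{\eta}{f}\circ\Grothendieck{\lambda,I(f)(\eta)}{\mu},\, f) \), so \( \codom\circ P = p \) holds by construction.

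For (hCC-3) I would first check that \( \pr_2 \) is well defined. The arrow assigned to \( (f,\mu) \) must satisfy the defining equation of a morphism of \( \C D \):
\[
\pr_2^\eta \circ \big(\Grothendieck{\eta}{f}\circ\Grothendieck{\lambda,I(f)(\eta)}{\mu}\big) = I(\pr_1^\lambda)(\mu)\circ \pr_2^\lambda .
\]
I would derive this by splitting the precomposition with (dep-1.1) into two steps. The first step uses (d$\Sigma$-1) to get \( \pr_2^\eta\circ\Grothendieck{\eta}{f} = \pr_2^{I(f)(\eta)} \). The second step uses (d$\Sigma$-2) for \( \mu \colon \lambda\to I(f)(\eta) \). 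One must also use the compatibility \( I(\pr_1^{I(f)(\eta)})\circ\ldots \) together with the pullback square of (\(\Sigma\)1) to identify the target fibres. Functoriality of \( \pr_2 \) then follows from (\(\Sigma\)1.1), (\(\Sigma\)2.1) and (\(\Sigma\)2.2), together with the fact that \( \C D\to\C E \) is faithful. Naturality of \( \chi \) reduces to the commuting pullback square \( \pr_1^\eta\circ\Grothendieck{\eta}{f}=f\circ\pr_1^{I(f)(\eta)} \) and to the triangle in (\(\Sigma\)2); the second components match by functoriality of \( I \).

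Next come the three conditions of (hCC-3). Condition c), \( \overline{P}^p=\chi \), holds because \( \chi_\lambda=(\pr_1^\lambda,1) \) is precisely the chosen cartesian lift of \( P_0 \)-data, namely \( \overline{\pr_1^\lambda}(\lambda) \), and \( Q\pr_2=P_0 \) on objects. Condition a) compares \( \pr_2(f,1) \), which is \( (\Grothendieck{\lambda}{f},1) \) by (\(\Sigma\)2.2), with the \( q \)-lift \( (\Grothendieck{\lambda}{f},1)\colon \pr_2^\lambda\circ\Grothendieck{\lambda}{f}\to\pr_2^\lambda \) from Lemma~\ref{2depAmbiLem::1}. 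Its domain agrees with \( \pr_2^{I(f)(\lambda)} \) by (d$\Sigma$-1), so the two arrows coincide.

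I expect condition b) to be the main obstacle. For vertical \( f=(1_c,\mu) \), up to the isomorphism case, \( \tv f \) is essentially \( \mu \), and \( \tv f\bullet P(e') \) is \( I(\pr_1^\lambda)(\mu) \). The \( Q_{\mathcal V} \)-lift therefore sends \( \pr_2^\lambda \) to \( I(\pr_1^\lambda)(\mu)\circ\pr_2^\lambda \), while the right side is the \( q \)-lift of \( \pr_2^\eta \) along \( \Grothendieck{\lambda,\eta}{\mu} \). Both sides are morphisms of \( \C D \) with the same image in \( \C E \) and the same codomain, so by discreteness it suffices to match their domains. That matching is exactly (d$\Sigma$-2). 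The delicate part is the general vertical case with \( p(f) \) a non-identity isomorphism, where \( \tv f \) and the \( \bullet \) construction must be unwound carefully; there I would reduce to the identity case by factoring \( f \) as a cartesian isomorphism followed by a fibre map and applying condition a).
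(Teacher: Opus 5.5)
The paper states this lemma without proof, so there is no route to compare with. Your decomposition is the natural one: the sas-tower from Lemma~\ref{2depAmbiLem::1}, \(P\) from Lemma~\ref{lem: equivSigmaComprehension}, then direct checks. Your verifications of well-definedness of \(\pr_2\) on arrows, naturality of \(\chi\), and condition a) are correct. Two points need repair.

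\textbf{Functoriality of \(\pr_2\).} This does \emph{not} follow from (\(\Sigma\)1.1), (\(\Sigma\)2.1), (\(\Sigma\)2.2) and faithfulness of \(Q\). Since \(p\circ Q\circ\pr_2=P_0\), preservation of composites reduces, after using those three axioms, to the interchange law
\[
\Grothendieck{\eta'}{f}\circ\Grothendieck{I(f)(\eta),I(f)(\eta')}{I(f)(\mu')}=\Grothendieck{\eta,\eta'}{\mu'}\circ\Grothendieck{\eta}{f}
\qquad (f\colon c\to c',\ \mu'\colon\eta\to\eta' \text{ in } I(c')).
\]
This says the pullback squares are natural in the fibre variable. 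The second components are harmless: they only need \(\pr_1^\eta\circ\Grothendieck{\eta}{f}\circ\Grothendieck{\lambda,I(f)(\eta)}{\mu}=f\circ\pr_1^\lambda\).

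The interchange law is independent of the listed axioms. Take \(\C C=\Sets\), every \(I(c)\) the one-object category with endomorphism group \(\mathbb Z/2\), and \(I(f)=\id\). Set \(\Grothendieck{c}{\ast}=c\times\mathbf 2\) with the projection and \(\Grothendieck{\ast}{f}=f\times 1_{\mathbf 2}\). Set \(\Grothendieck{\ast,\ast}{x}=1_c\times\varphi_c(x)\), where \(\varphi_c\colon\mathbb Z/2\to\mathrm{Aut}(\mathbf 2)\) is the isomorphism if \(c\) is a singleton and trivial otherwise. All Sigma-object axioms hold. However, for \(f\colon\mathbf 1\to\mathbf 2\) and the generator, the two sides are \(f\times\mathrm{swap}\) and \(f\times 1_{\mathbf 2}\). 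With \(\dHom(c,\ast)=\{\ast\}\) this is a (dep,\(\Sigma\))-category in the paper's sense, yet its \(\pr_2\) (and \(P\)) is not a functor. So you must take the interchange law from functoriality of \(P\), which you import from Lemma~\ref{lem: equivSigmaComprehension}. Note that this lemma, and hence the present one, tacitly needs the law as an extra axiom on Sigma-objects.

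\textbf{Condition b).} Your argument here is mis-framed. For \(f=(1_c,\mu)\) with \(\mu\colon\lambda\to\eta\), take the \(Q_{\mathcal V}\)-oplift at \(\pr_2^\lambda\) and the \(q\)-cartesian lift at \(\pr_2^\eta\). They lie over the different arrows \((1,I(\pr_1^\lambda)(\mu))\) and \((\Grothendieck{\lambda,\eta}{\mu},1)\) of \(\C E\), and they have different codomains. So ``same image, same codomain, discreteness'' does not apply: the two arrows are meant to be composed, not identified.

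Condition b) has to be read as an equality of objects of \(\C G\): the target of the oplift equals the source of the cartesian lift. This is how the paper uses it in the proof of Lemma~\ref{HigherCCto2depS}. Read this way, for \(f=(h,\mu)\colon(c',\lambda')\to(c,\lambda)\) with \(h\) invertible, b) says
\[
I(\pr_1^{\lambda'})(\mu)\circ\pr_2^{\lambda'}=\pr_2^{\lambda}\circ\big(\Grothendieck{\lambda}{h}\circ\Grothendieck{\lambda',I(h)(\lambda)}{\mu}\big).
\]
This is verbatim the identity you already proved, via (dep-1.1), (d\(\Sigma\)-1) and (d\(\Sigma\)-2), when checking that \(\pr_2(h,\mu)\) is a morphism of \(\C D\). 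Invertibility of \(h\) plays no role, so the reduction of the ``delicate'' case to the identity case via a) is unnecessary, though it would also work.

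Likewise, in c) the required equality is \(p\circ Q\circ\pr_2=P_0\), on objects and arrows, not \(Q\circ\pr_2=P_0\).
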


\begin{lem}\label{HigherCCto2depS} 
	We can map any higher comprehension category \( (Q,P,\chi) \) to a (dep,$\Sigma$)-category by letting the underlying dep-category be the category \( \C C \) obtained from the sas-tower \( (Q,p) \) by Lemma~\ref{sasto2dep} and the (dep,$\Sigma$)-structure, that is the Sigma-objects are obtained from the comprehension functor \( P \).
	For each \( e \in \C E \) we define \( \pr_2^{e} := \pr_2(e) \).
\end{lem}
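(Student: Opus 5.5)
Given a higher comprehension category $(Q,P,\chi)$, the plan is to apply the constructions already available for each layer and then check only the two axioms that involve $\pr_2$.

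First, Lemma~\ref{sasto2dep} turns the sas-tower $(Q,p)$ into a dep-category $I\colon \C C\opp \to \Cat$. Concretely, $I(c)$ is the fibre of $p$ over $c$, $\dHom(c,e)=Q^{-1}(e)$, precomposition is the $q$-lift, and postcomposition is the $Q_\mathcal{V}$-lift. Second, Lemma~\ref{lem: equivSigmaComprehension} applied to the split comprehension category $P$ equips the same indexed category with Sigma-objects. We set $\Grothendieck{c}{e}:=P_0(e)$, $\pr_1^e:=P(e)$, and $\Grothendieck{e}{f}:=P_0\big(\overline{f}^p(e)\big)$; for vertical $\mu$ we set $\Grothendieck{e,e'}{\mu}:=P_0(\mu)$. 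The condition $\codom\circ P=p$ ensures that this is the same indexed category as before.

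Next I check that $\pr_2^e:=\pr_2(e)$ has the right type. Condition (c), $\overline{P}^p=\chi$, gives $\chi_e=\overline{P(e)}^p(e)$. Since $\chi_e$ is a map $Q(\pr_2(e))\to e$, this yields $Q(\pr_2(e))=P(e)^*(e)=I(\pr_1^e)(e)$. Applying $q=p\circ Q$ then shows that $\pr_2(e)$ lies over $P_0(e)=\Grothendieck{c}{e}$. Hence $\pr_2^e\in\dHom\big(\Grothendieck{c}{e},I(\pr_1^e)(e)\big)$.

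Axiom (d$\Sigma$-\ref{2depSigmadefn::cd1}) should follow from condition (a). Since $\Grothendieck{e}{f}=P_0(\overline{f}^p(e))$ and precomposition is the $q$-lift, we get $\pr_2^e\circ\Grothendieck{e}{f}=\overline{P_0(\overline f^p(e))}^q\big(\pr_2(e)\big)$, which is the domain of $\pr_2(\overline f^p(e))$. Because $\pr_2$ is a functor, $\pr_2(\overline f^p(e))$ is a map $\pr_2(f^*e)\to\pr_2(e)$, so its domain is $\pr_2^{I(f)(e)}$. Comparing domains, using that the lifts are discrete, gives the equality.

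Axiom (d$\Sigma$-\ref{2depSigmadefn::cd2}) should follow from condition (b). Take $\mu\colon e'\to e$ vertical with $p(\mu)=1$, so that $\tv\mu=\mu$. The right-hand side $\pr_2^e\circ\Grothendieck{e',e}{\mu}$ is the $q$-lift of $P_0(\mu)$ at $\pr_2(e)$, which is exactly the right-hand side of (b). The left-hand side of (b) is the $Q_\mathcal{V}$-pushforward of $\pr_2(e')$ along $\mu\bullet P(e')$. By definition of $I$, this is the postcomposition $I(\pr_1^{e'})(\mu)\circ\pr_2^{e'}$, because $\mu\bullet P(e')$ is precisely the reindexing $I(P(e'))(\mu)$. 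So (b) is literally the required equality.

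The main obstacle I expect is this bookkeeping: matching $\mu\bullet P(e')$ with $I(\pr_1)(\mu)$ and confirming that the objects being compared sit in the same fibres of $Q$. If a mismatch appears there, I will use sas-\ref{sastowerdef::item4} together with uniqueness of discrete lifts to identify the two sides. Finally, the equalities $\pr_2(\overline f^p(e))=\dots$ in (a) and (b) are equalities of morphisms, so equality of their domains is immediate, which settles both axioms.
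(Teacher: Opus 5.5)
Your proposal is correct and is essentially the paper's proof: the dep-structure comes from Lemma~\ref{sasto2dep}, the Sigma-objects come from $P$, and (d$\Sigma$-1) and (d$\Sigma$-2) are read off from conditions a) and b) of (hCC-3) exactly as in the paper, while your check via c) that $\pr_2^e$ lies in $\dHom\big(\Grothendieck{c}{e},I(\pr_1^e)(e)\big)$ supplies a step the paper leaves implicit. The only slip is your closing sentence: the two sides of b) are an opcartesian lift out of $\pr_2(e')$ and a cartesian lift into $\pr_2(e)$, so b) has to be read as identifying the codomain of the former with the domain of the latter (which is how you use it in your (d$\Sigma$-2) paragraph), not as an equality of morphisms whose domains you then compare.
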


\begin{prf} 
	As the two lemmata already yield the underlying dep-structure, it remains to check conditions \itemref{d$\Sigma$}{2depSigmadefn::cd1} and \itemref{2d$\Sigma$}{2depSigmadefn::cd2}.
	For the first condition, we calculate
	\begin{align*}
		[\pr_2^e]\big(\Grothendieck{e}{f}\big) &= \overline{P_0(\overline{f}^p(e))}^{*(q)}(\pr_2(e))
						    = \pr_2\big(\overline{f}^p(e)\big)
						    \\ &= \pr_2^{e \circ f}.
	\end{align*}
	For the second condition, we calculate (where \(f \colon e' \to e \) is such that \( p(f) = 1_{p(e)} \)
	\begin{align*} 
		(f \bullet \pr_1^{e'}) \circ \pr_2^{e'} &= \big(\tv f \bullet P_0(e')\big) \circ \pr_2(e')
						     = \underline{\tv f \bullet P_0(e')}_Q \big( \pr_2(e')\big)
						     \\ &= \overline{P_0(f)}^{q}\big(\pr_2(e)\big)
						     = \overline{\Grothendieck{p(e)}{f}}^{q} \big( \pr_2(e)\big)
						     \\ &= [\pr_2^e]\Big(\Grothendieck{p(e)}{f}\Big).\qedhere
	\end{align*}
\end{prf}

\begin{defn}[Maps between higher comprehension categories]
	Let \( (Q,P,\chi) \), \( (Q',P',\chi') \) be higher comprehension categories. 
	A \emph{map} is a triple \( (\hat F, \tilde F, F)\colon (Q,P,\chi) \to (Q',P',\chi') \) such that 
	\begin{enumerate}[itemindent = 2.7em]
		\renewcommand{\labelenumi}{(hCC-{\theenumi})}
		\setcounter{enumi}{\value{hCCdefn}}
	\item \( (\hat F, \tilde F, F) \) is a map of sas-towers (see Definition~\ref{sasmapsdef}).
	\item \(( \tilde F, F) \) is a strict map of comprehension categories (see~\cite[Def.~2.2]{ahrensComparingSemanticFrameworks2025a}).
	\item \( \pr_2' \circ \tilde F = \hat F \circ \pr_2 \) and \( \chi' \ast \tilde F = \tilde F \ast \chi \).
	\end{enumerate}
\end{defn}

\begin{lem}\label{2deptoHCCfun}
	Any (dep,$\Sigma$)-functor \( F \colon I \to I \) can be translated into a map between the associated higher comprehension categories (from Lemma~\ref{2depSigmatohCC}).
\end{lem}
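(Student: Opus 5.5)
The plan is to build the required triple \( (\hat F,\tilde F,F) \) out of the pieces the paper already has, and then verify the three new conditions of a map of higher comprehension categories. A (dep,$\Sigma$)-functor consists of a dep-functor \( (F,F_{\spacer},F^{\spacer}) \) that preserves Sigma-objects strictly (\( F(\Grothendieck{c}{\lambda}) = \Grothendieck{F(c)}{F_c(\lambda)} \), together with the projections and the arrows \( \Grothendieck{\lambda}{f} \), \( \Grothendieck{\lambda,\mu}{\eta} \)) and satisfies \( F^{I(\pr_1^\lambda)(\lambda)}(\pr_2^\lambda) = \pr_2^{F_c(\lambda)} \).

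\textbf{Step 1 (sas-map).} Lemma~\ref{2depfuntomap} applied to the underlying dep-functor gives a map of sas-towers \( (\hat F,\tilde F,F) \). Explicitly, \( \hat F(\Phi) = F^\lambda(\Phi) \) on objects and \( (f,\eta)\mapsto (F(f),F_c(\eta)) \) on arrows. This settles the first new condition.

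\textbf{Step 2 (comprehension map).} The pair \( (\tilde F,F) \) is the map of split fibrations induced by \( (F,F_{\spacer}) \). Via the 2-equivalence of Lemma~\ref{lem: equivSigmaComprehension}, the strict preservation of Sigma-objects says exactly that \( F^\to\circ P = P'\circ\tilde F \) on the nose. So \( (\tilde F,F) \) is a strict map of comprehension categories. I take this as part of that lemma and do not reprove it.

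\textbf{Step 3 (compatibility with \( \pr_2 \) and \( \chi \)).} On objects, \( \hat F(\pr_2(\lambda)) = F^{I(\pr_1^\lambda)(\lambda)}(\pr_2^\lambda) = \pr_2^{F_c(\lambda)} = \pr_2'(\tilde F(\lambda)) \), using that \( F_{\spacer} \) is natural, so \( F_{\Sigma\lambda}(I(\pr_1^\lambda)\lambda) = I'(\pr_1^{F\lambda})(F_c\lambda) \). On an arrow \( (f,\mu)\colon\lambda\to\eta \), the paper sends it under \( \pr_2 \) to
\[ \Big(\Grothendieck{\eta}{f}\circ\Grothendieck{\lambda,\eta}{\mu},\ I(\pr_1^\lambda)(\mu)\Big). \]
Applying \( \hat F \) gives \( F \) of the first component and \( F_{\spacer} \) of the second. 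Strict preservation of the \( \Sigma \)-arrows, together with naturality of \( F_{\spacer} \) (which gives \( F_{\spacer}\circ I(\pr_1^\lambda) = I'(F\pr_1^\lambda)\circ F_{\spacer} \)), identifies the result with \( \pr_2' \) applied to \( \tilde F(f,\mu) \). For \( \chi \), both \( \tilde F(\chi_\lambda) \) and \( \chi'_{\tilde F\lambda} \) equal \( (\pr_1^{F_c\lambda},1) \), because \( F(\pr_1^\lambda) = \pr_1^{F_c\lambda} \) and \( F_{\spacer} \) preserves identities. The condition "\( \chi'\ast\tilde F=\tilde F\ast\chi \)" should be read as whiskering with \( \hat F \) on the \( Q\circ\pr_2 \) side, that is, \( \chi'_{\tilde F e} = \tilde F(\chi_e) \), and that is what we have verified.

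\textbf{Main obstacle.} The real difficulty is bookkeeping in Step~3 on arrows. The paper's formula for \( \pr_2 \) on morphisms mixes conventions (\( F(f) \) versus \( I(f) \), and the direction of \( \mu \)). So first I would fix that \( \pr_2(f,\mu) \) lies over \( P_0(f,\mu) \) in \( \C G \). Then I would use conditions (hCC-3a,b) of the constructed higher comprehension category: they express \( \pr_2 \) on cartesian and on vertical arrows as lifts. Since \( \hat F \) preserves \( q \)-lifts and \( Q_\mathcal V \)-lifts (it is a sas-map, by Step~1), both \( \hat F\circ\pr_2 \) and \( \pr_2'\circ\tilde F \) are determined by their values on objects. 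Every arrow factors as \( \prone{(f,\mu)}\circ\tv{(f,\mu)} \), so agreement on objects, which we have, gives agreement everywhere. This avoids manipulating the explicit formula.
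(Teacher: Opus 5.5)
Your Steps 1 and 2, the object-level identity $\hat F(\pr_2(\lambda))=\pr_2'(\tilde F(\lambda))$, and your one-sentence computation on arrows (strict preservation of the $\Sigma$-arrows plus naturality of $F_{\spacer}$) are exactly the paper's proof. That computation goes through once $\pr_2(f,\mu)$ is read with $\mu\colon\lambda\to I(f)(\eta)$. Your check $\tilde F(\chi_\lambda)=\chi'_{F_c(\lambda)}$ also covers a condition the paper leaves unverified.

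\textbf{The gap.} The problem is the shortcut in your last paragraph, which you say you would use instead of that computation. Condition (hCC-3b) does not mention $\pr_2(f)$ at all. Its left side is an arrow out of $\pr_2(e')$ and its right side an arrow into $\pr_2(e)$. So it can only be read as an equation of objects: the codomain of the $Q_{\mathcal V}$-oplift equals the domain of the $q$-lift. That is the abstract form of (d$\Sigma$-2), and it is how the paper uses it in Lemma~\ref{HigherCCto2depS}. It makes the two lifts composable; it does not say their composite is $\pr_2$ of the vertical arrow. So on the vertical factor $\tv{(f,\mu)}$ nothing you cite pins down $\pr_2$, and ``both functors are determined by their values on objects'' is unproved there. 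Showing
$\pr_2(1_c,\mu)=\overline{P_0(1_c,\mu)}^{q}(\pr_2^\eta)\circ\underline{(1_c,\mu)\bullet\pr_1^\lambda}_Q(\pr_2^\lambda)$
requires exactly the explicit formula you wanted to avoid.

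\textbf{A formula-free fix.} The missing ingredient is that $Q'$ is faithful: in Lemma~\ref{2depSigmatohCC} an arrow of $\C G'$ is just an arrow of $\C E'$ satisfying an equation.
\begin{itemize}
\item Given agreement on objects and $Q'\circ\hat F=\tilde F\circ Q$, it suffices to show $\tilde F(Q\pr_2(g))=Q'\pr_2'(\tilde F(g))$ for every $g\colon e'\to e$.
\item By (hCC-3c), $p\circ Q\circ\pr_2=P_0$ and $\chi_e$ is cartesian. So naturality of $\chi$ characterises $Q\pr_2(g)$ as the unique arrow over $P_0(g)$ with $\chi_e\circ Q\pr_2(g)=g\circ\chi_{e'}$.
\item Apply $\tilde F$ and use $\tilde F(\chi_e)=\chi'_{\tilde F(e)}$ together with $F\circ P_0=P_0'\circ\tilde F$. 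Then both $\tilde F(Q\pr_2(g))$ and $Q'\pr_2'(\tilde F(g))$ lie over $P_0'(\tilde F(g))$. Both also factor $\tilde F(g)\circ\chi'_{\tilde F(e')}$ through the cartesian arrow $\chi'_{\tilde F(e)}$, so they coincide.
\end{itemize}
This argument needs neither (hCC-3b) nor the prone/true-vertical decomposition.
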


\begin{prf} 
	From Lemma~\ref{2depfuntomap} we know that the underlying 2-dep-functor \( F \) yields a map \( \sasmap F\colon (Q,p) \to (Q',p') \) of sas-towers.
	Analogously, we know that the underlying (2-fam,$\Sigma$)-functor yields a strict map \( (\dot F, F) \) of comprehension categories.
	It is immediate from the definition of those maps that \( \dot F = \tilde F \), so the maps agree on the indexed category structure.
	To see that \( \pr_2' \circ \tilde F = \hat F \circ \pr_2 \) we simply calculate for \( (f,\eta) \colon \lambda \to \mu \), where \( \lambda \in \fHom(c), \mu \in \fHom(c') \)
	\begin{align*}
		(\pr_2' \circ \tilde F)(\lambda) &= \pr_2^{F_c(\lambda)} = F_{\lambda \circ \pr_1^\lambda}(\pr_2^\lambda) = (\hat F \circ \pr_2)(\lambda) \\
		\hbox{and } (\pr_2' \circ \tilde F)(f,\eta)  &= \pr_2'\big((F(f),F_c(\eta)\big)
							   \\&= \Big(\Grothendieck{F_{c'}(\mu)}{F(f)} \circ \Grothendieck{F_c(\lambda),F_{c'}(\mu)}{F_c(\eta)}, F_c(\eta) \bullet F_{\Grothendieck{c}{\lambda}}(\pr_1^\lambda)\Big)
											  \\ &= \Big( F\Big(\Grothendieck{\mu}{f} \circ \Grothendieck{\lambda,\mu}\Big),F_c(\eta\bullet \pr_1^{\lambda}\Big) 
											  \\ &= \hat F\big(\pr_2(f,\eta)\big)
	\end{align*}
This shows the last remaining property, so \( (\hat F, \tilde F, F)  \) is a map between higher comprehension categories.	
\end{prf}

\begin{lem}\label{HCCto2depSFun}
	If \( (\sasmap F) \colon (Q,P,\chi) \to (Q',P',\chi') \) is a map between higher comprehension categories, then \( F  \) can be made a (dep,$\Sigma$)-functor of the associated (dep,$\Sigma$)-categories from Lemma~\ref{HigherCCto2depS} by setting 
	\[
		F_c(\lambda) := \tilde F(\lambda) \quadd\hbox{and}\quadd F_\lambda(\Phi) := \hat F(\Phi).
	\]
\end{lem}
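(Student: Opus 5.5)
The plan is to split the claim into its three layers: the underlying map of indexed categories, the dep-structure part, and the $\Sigma$/second-projection part. The first two layers are already covered by earlier results, so the genuinely new work is only the compatibility of $F$ with Sigma-objects and with the second projections.

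\emph{Dep-functor part.} Forget $P$ and $\chi$. By (hCC-4), the triple $(\hat F,\tilde F,F)$ is a map of sas-towers between $(Q,p)$ and $(Q',p')$. Lemma~\ref{sasmapto2depfun} then turns it into a dep-functor $(F,F_{\spacer},F^{\spacer})$ between the dep-categories produced by Lemma~\ref{sasto2dep}, using exactly the assignments $F_c(\lambda):=\tilde F(\lambda)$ and $F_\lambda(\Phi):=\hat F(\Phi)$ of the statement. So conditions (dep-3) and (dep-4) need no new argument.

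\emph{Sigma-objects.} Next I check that $(F,F_{\spacer})$ preserves Sigma-objects on the nose. By (hCC-5), $(\tilde F,F)$ is a strict map of comprehension categories, so $F^{\to}\circ P = P'\circ\tilde F$. Evaluating at $\lambda$ gives
\[
F\Big(\Grothendieck{c}{\lambda}\Big)=\Grothendieck{F(c)}{F_c(\lambda)},\qquad F(\pr_1^\lambda)=\pr_1^{F_c(\lambda)}.
\]
Evaluating at the chosen cartesian lifts $\overline{f}^p(\lambda)$, which $\tilde F$ preserves by (sas-5), gives $F(\Grothendieck{\lambda}{f})=\Grothendieck{F_c(\lambda)}{F(f)}$. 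Evaluating at vertical arrows $(1_c,\mu)$ gives the corresponding equation for $\Grothendieck{\lambda,\eta}{\mu}$. This is just the 1-cell half of Lemma~\ref{lem: equivSigmaComprehension}, applied to the Sigma-objects built in Lemma~\ref{HigherCCto2depS}.

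\emph{Second projections.} Finally I show $F_{\Grothendieck{c}{\lambda}}$ sends $\pr_2^\lambda$ to $\pr_2^{F_c(\lambda)}$. By definition $\pr_2^\lambda=\pr_2(\lambda)$, so we need
\[
\hat F\big(\pr_2(\lambda)\big)=\pr_2'\big(\tilde F(\lambda)\big),
\]
which is the object part of the first equation in (hCC-6). I also have to check that both sides live in the same fibre. Apply $Q'$ and use $Q'\hat F=\tilde F Q$ together with the second equation of (hCC-6), $\chi'\ast\tilde F=\tilde F\ast\chi$. This gives $Q'(\hat F(\pr_2\lambda))=\tilde F(Q\pr_2\lambda)$, which is the domain of $\tilde F(\chi_\lambda)$, that is, the reindexing of $F_c(\lambda)$ along $\pr_1^{F_c(\lambda)}$. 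Hence the type index of the image of $\pr_2^\lambda$ is the one the definition of a (dep,$\Sigma$)-functor requires.

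The step I expect to be the main obstacle is this last indexing bookkeeping. One has to confirm that $\tilde F$ applied to the reindexed object $I(\pr_1^\lambda)(\lambda)$ equals $I'(F(\pr_1^\lambda))(F_c(\lambda))$ strictly, not merely up to isomorphism. My approach is to use that $(\tilde F,F)$ preserves the splitting on the nose, so that $\tilde F(\overline{\pr_1^\lambda}(\lambda))=\overline{F(\pr_1^\lambda)}(\tilde F\lambda)$. By (hCC-3c), $\chi_\lambda$ is exactly this chosen lift, and combining this with $\chi'\ast\tilde F=\tilde F\ast\chi$ settles the index. Once this holds, the conditions (d$\Sigma$-1) and (d$\Sigma$-2) on the target are automatic, since the target is itself a (dep,$\Sigma$)-category by Lemma~\ref{HigherCCto2depS}.
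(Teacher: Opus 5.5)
Your proposal is correct and follows the paper's proof. The paper uses the same three layers: the dep-functor part comes from Lemma~\ref{sasmapto2depfun} via (hCC-4), preservation of Sigma-objects comes from the strict comprehension-category map in (hCC-5), and preservation of second projections is exactly the object part of \( \pr_2' \circ \tilde F = \hat F \circ \pr_2 \) in (hCC-6). Two small remarks: the splitting-preservation of \( \tilde F \) is (sas-6) rather than (sas-5), and your fibre bookkeeping, while sound, is automatic, since the object equality \( \hat F(\pr_2(\lambda)) = \pr_2'(\tilde F(\lambda)) \) already forces both sides to lie over the same object of \( \C E' \).
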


\begin{prf} 
	It is immediate that \( F \) obtained in this manner is both a map of the underlying indexed categories with Sigma-objects and a dep-functor of the underlying dep-categories from Lemma~\ref{sasmapto2depfun}.
	Thus by showing 
	\begin{align*} 
		F(\pr_2^\lambda) = \hat F\big(\pr_2(\lambda)\big) = \pr_2'\big(\tilde F(\lambda)\big) = \pr_2^{F(\lambda)}
	\end{align*}
	we have that \( F \) is a (dep,$\Sigma$)-functor.
\end{prf}

\begin{defn}[Transformation of maps between higher comprehension categories]
	Let \( \sasmap F, \sasmap G \colon (Q,P,\chi) \to (Q',P',\chi') \) are two maps between higher comprehension categories, we define a \emph{transformation} to be a transformation between maps of sas-towers (see Definition~\ref{sastrafodef}) that is also a transformation of maps between the underlying comprehension categories (see~\cite[Def.~2.2]{ahrensComparingSemanticFrameworks2025a})
\end{defn}

\begin{lem} 
	Any (dep,$\Sigma$)-natural transformation between (dep,$\Sigma$)-functors \( F,G \colon \C C \to \C D \) induces a transformation between the associated maps of higher comprehension categories from Lemma~\ref{2deptoHCCfun}. 
\end{lem}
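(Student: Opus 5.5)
The transformation is assembled from two constructions already in the paper, and then one compatibility check. Let \( \eta \colon F \To G \) be a (dep,$\Sigma$)-natural transformation. We may assume that \( \eta \) is a dep-natural transformation between the underlying dep-functors, and also a transformation between the underlying maps of indexed categories with Sigma-objects.

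\emph{Sas-tower part.} Lemma~\ref{2depnattotrafo} gives a transformation \( \sasmap\eta \colon \sasmap F \To \sasmap G \) of the associated sas-maps. Its components are \( \tilde\eta_\lambda = (\eta_c, 1_{G_c(\lambda)\circ\eta_c}) \), coming from the standard indexed-category correspondence, and \( \hat\eta_\Phi = (\eta_c,1) \colon \hat F(\Phi)\to\hat G(\Phi) \). The lemma already shows that these satisfy both conditions of Definition~\ref{sastrafodef}: commutation with \( Q',p' \), and the requirement that \( \hat\eta,\tilde\eta \) are the chosen cartesian lifts of \( \eta \).

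\emph{Comprehension part.} Lemma~\ref{lem: equivSigmaComprehension} identifies indexed categories with Sigma-objects with split comprehension categories, strict maps and strict transformations. Applying this 2-functor to the underlying transformation of indexed categories with Sigma-objects gives a transformation \( (\tilde\eta',\eta) \) of the strict maps \( (\tilde F,F) \To (\tilde G,G) \) of comprehension categories. One then checks that \( \tilde\eta' = \tilde\eta \); both are \( (\eta_c,1) \), in the same way that \( \dot F = \tilde F \) was observed in the proof of Lemma~\ref{2deptoHCCfun}. Consequently \( \sasmap\eta \) is at once a transformation of sas-maps and of the underlying comprehension-category maps, which is exactly what the definition of a transformation of maps between higher comprehension categories demands.

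\emph{Compatibility check, the expected main obstacle.} The remaining task is to verify the condition of \cite[Def.~2.2]{ahrensComparingSemanticFrameworks2025a} in the form of Definition~\ref{pseudomaptransformationdefn} with \( \phi=\id \). Since the maps are strict, this reduces to
\[
  P'(\tilde\eta_\lambda) = \eta_{P(\lambda)} ,
\]
that is, the square formed by \( \Grothendieck{G_c(\lambda)}{\eta_c} \) and \( \eta_c \) over the first projections. To verify it, I would unfold \( \tilde\eta_\lambda=(\eta_c,1) \), so that \( P'(\tilde\eta_\lambda) \) is the Sigma-morphism \( \Grothendieck{G_c(\lambda)}{\eta_c} \). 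I would then use the axiom on 2-cells between Sigma-object maps, namely that \( \eta \) commutes with the chosen Sigma-pullbacks, to identify this with the component of \( \eta \) at the comprehension arrow. Here one must be careful that the transformation in \( \mathsf{IC} \) satisfies \( (1\ast\eta)\circ F_{\spacer}=G_{\spacer} \) strictly, so that the reindexings of \( G_c(\lambda) \) along \( \eta_c \) are literally \( F_c(\lambda) \); only then do the identity second components type-check.

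Finally, I do not need a separate check that \( \hat\eta \) respects \( \pr_2 \) or \( \chi \), since the definition of a transformation does not demand it. It is nevertheless worth noting that \( \hat\eta_{\pr_2(\lambda)} = (\eta_{\Grothendieck{c}{\lambda}},1) \) is coherent with \( \tilde\eta \), by the (dep,$\Sigma$)-naturality together with (d$\Sigma$-1).
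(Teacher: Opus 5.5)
Your proposal is correct and follows the same route as the paper, whose proof simply says the claim is immediate from Lemma~\ref{2depnattotrafo} together with the translation between transformations of indexed functors and transformations of comprehension-category maps; this is exactly your sas-part plus comprehension-part, with the identification $\tilde\eta' = \tilde\eta$. Your separate ``compatibility check'' is not a remaining obstacle, because applying the 2-functor of Lemma~\ref{lem: equivSigmaComprehension} already guarantees that condition, so that step only verifies it a second time.
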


\begin{prf}
	Immediate from Lemma~\ref{2depnattotrafo} and the translation of maps between indexed functors and maps between comprehension categories.
\end{prf}

The reverse direction holds trivially as well.

\begin{lem} 
	Any transformation of maps \( \sasmap F \To \sasmap G \) induces a (dep,$\Sigma$)-natural transformation between the associated (dep,$\Sigma$)-functors of Lemma~\ref{HCCto2depSFun}.
\end{lem}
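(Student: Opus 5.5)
The plan is to reduce the statement to the two results it combines: Lemma~\ref{trafoto2depnat}, and the standard correspondence between transformations of strict maps of split comprehension categories and transformations of maps of indexed categories with Sigma-objects from Lemma~\ref{lem: equivSigmaComprehension}. A transformation of maps between higher comprehension categories is, by definition, a triple \( \sasmap\eta \) that is at once a transformation of sas-maps and a transformation of the underlying maps of comprehension categories.

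First, I forget the comprehension structure and view \( \sasmap\eta \) as a transformation of the sas-maps \( \sasmap F \To \sasmap G \). Lemma~\ref{trafoto2depnat} then gives a dep-natural transformation \( \eta \colon (F,F_{\spacer},F^{\spacer}) \To (G,G_{\spacer},G^{\spacer}) \) between the dep-functors of Lemma~\ref{sasmapto2depfun}. In particular, condition (dep-\arabic{depcounter}) holds: \( G^\lambda(\Phi)\circ\eta_c = F^\lambda(\Phi) \). Its underlying transformation of indexed functors is the one determined by \( (\tilde\eta,\eta) \).

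Second, I use the pair \( (\tilde\eta,\eta) \) as a 2-cell of strict maps of comprehension categories. Lemma~\ref{lem: equivSigmaComprehension} turns it into a transformation of maps of indexed categories with Sigma-objects, i.e.\ it is compatible with the Sigma-objects \( \Grothendieck{c}{\lambda} \) and the first projections \( \pr_1^\lambda \). Both constructions return the same natural transformation \( \eta \colon F\opp \To G\opp \) with the same components, since both read it off from \( (\tilde\eta,\eta) \). So one transformation satisfies both families of conditions.

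The step I expect to be the main obstacle is confirming that nothing further is required for \( \pr_2 \). The dep-naturality condition only concerns dep-arrows, and the \( \pr_2^\lambda \) are themselves dep-arrows, so they are covered by the first step. The (dep,$\Sigma$)-functors of Lemma~\ref{HCCto2depSFun} already preserve \( \pr_2 \) strictly, as required by (hCC) \( \pr_2'\circ\tilde F = \hat F\circ\pr_2 \). If the definition of a (dep,$\Sigma$)-natural transformation carries an extra coherence with \( \pr_2 \), I would check it by applying (dep-\arabic{depcounter}) to \( \Phi = \pr_2^\lambda \) and using \( \hat\eta_{\pr_2(\lambda)} = \overline{\eta_{\Sigma}}^q(\pr_2(\lambda)) \) from the sas-transformation axiom. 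Naturality of \( \hat\eta \) along \( \pr_2 \) should then reduce it to the naturality square of \( \tilde\eta \).
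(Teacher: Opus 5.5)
Your proposal is correct and follows the same route as the paper, whose proof just says the statement is immediate from Lemma~\ref{trafoto2depnat} together with the translation between transformations of maps of indexed categories (with Sigma-objects) and transformations of maps of comprehension categories. Your extra paragraph on \( \pr_2 \) goes beyond what the paper checks, since the paper imposes no separate \( \pr_2 \)-coherence, but it is harmless.
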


\begin{prf} 
	Immediate from Lemma~\ref{trafoto2depnat} and the translation of maps between indexed functors and maps of comprehension categories.
\end{prf}

\chapter{A comparison with generalised categories with families}\label{sec: Comparison}
\label{ComparisongCwF2depS}
Coraglia and Di Liberti introduced a notion very similar looking to higher discrete comprehension categories---generalised categories with families---in \cite{coragliaContextJudgementDeduction2024}. 
Coraglia and Emmenegger then showed in \cite{coraglia2categoricalAnalysisContext2024} that generalised categories with families are biequivalent to comprehension categories. 

We start by recalling the definition.

\begin{defn}[generalised categories with families]
	A \emph{generalised category with families} consists of a map of  fibrations \( \Sigma \colon \big(\dot u \colon \dot{\mathcal{U}} \to \C B\big) \to \big( u \colon \mathcal{U} \to \C B\big) \) together with an adjunction
	\[ \begin{tikzcd}
		\dot{\mathcal{U}} \ar[r,"\Sigma"'{name = U}, bend right = 30] 
		\ar[dr,"\dot u"', bend right = 40]
		& \mathcal{U} \ar[l,"\Delta"'{name = V}, bend right = 30]
		\ar[d," u"]
		\\
		& \C B
		\ar[from = U, to = V, "\dashv"{description, sloped},phantom]
	\end{tikzcd} \]
	such that all components of the unit and counit are cartesian with respect to \( \dot u \) and \( u \) respectively.
\end{defn}

Looking at the diagrams governing the two notions they appear eerily similar:
\begin{table}[h]
	\caption{gcwfs vs. hccs}
	\label{table::gcwf}
	\centering
	\begin{tabular}{p{0.4\textwidth}p{0.4\textwidth}}
		\toprule
		generalised categories with families & higher comprehension categories
		\\\cmidrule(lr){1-2}
	\[ \begin{tikzcd}
		\dot{\mathcal{U}} \ar[r,"\Sigma"{swap,name = U}, bend right = 30] \ar[dr,"\dot{u}"', bend right = 45]
		& \mathcal{U} \ar[l,"\Delta"{swap,name = V}, bend right = 30]
		\ar[d,"u"]
		\\
		& \C B
		\ar[from = U, to = V, "\dashv"{sloped, description},phantom]
	\end{tikzcd} \]
	
						     &
		\[ \begin{tikzcd}
			\C G \ar[r,"Q"', bend right = 30] \ar[r,Rightarrow,"\chi",shorten = 8pt]
			\ar[dr,"q"', bend right = 45]
			& \C E \ar[l,"\pr_2"', bend right = 30] 
			\ar[r,"P"] \ar[d,"p"]
			& \C B^{\to}
			\ar[dl,"\codom",bend left = 40]
			\\
			& \C B
		\end{tikzcd} \]
		\\\bottomrule
	\end{tabular}
\end{table}

However, there are some differences, which we will highlight in the following paragraphs, aiding our analysis how  generalised categories relate to categories with dependent arrows, and how the biequivalence between comprehension categories and generalised categories with families fits into our picture.
\par\bigskip
\paragraph{Adjunction versus counit}
	The immediately visible difference (discarding the triangle on the right for higher comprehension categories) is that where generalised categories with families demand an adjunction, higher discrete comprehension categories demand only a natural transformation.
	Unpacking the definition of an adjunction one sees that an adjunction is a stricter version, as it decomposes to two natural transformations, 
	\[
		\hbox{the unit } \id_{\mathcal{U}} \To \Delta\circ \Sigma \hbox{ and the counit }  \Sigma\circ \Delta \To \id_{\dot{\mathcal{U}}}, 
	\]
hence, our natural transformation \( \chi \) plays the role of a counit.
\par\bigskip
\paragraph{Split versus unsplit}
One difference between generalised categories with families is that they are concerned with normal fibration, whereas we demand that all fibrations come equipped with a normalised cleavage, i.e. are split.
This is the reason we demand that all splittings are preserved by the functors involved, more precisely:
\begin{itemize}
	\item in the definition of sas-towers, the equivalent part to \itemref{sas}{sastowerdef::item3} in the definition of generalised categories with families is that \( \Sigma \) is a functor between fibrations and hence preserves cartesian morphisms. 
		The condition equivalent to \itemref{sas}{sastowerdef::item4} is, that it reflects cartesian morphisms, a fact that can be deduced from the definition (see \cite[Lemma 3.20.3]{coraglia2categoricalAnalysisContext2024}).
		The part of oplifts with respect to vertical arrows (with respect to \( u \)) is obviously missing, but we will explain later why this does not matter for the translation to comprehension categories.
	\item In condition \itemref{hCC}{hCCdefn::1} in the definition of higher comprehension categories the part c) corresponds to the counit consisting of (chosen) lifts of arrows in \( \C E \).
		That the counit for the adjunction in a generalised category with families is at least cartesian (as no specific chosen lifts are given) is guaranteed by  the definition.
	\item That \( \pr_2 \) preserves our choices of lifts for \( p \) corresponds to \( \Delta \) respecting the cartesianness of morphisms, this is guaranteed by \cite[Lemma 3.20.2]{coraglia2categoricalAnalysisContext2024}.
\end{itemize}
\paragraph{The (bi-/2-)equivalences}
The biequivalences described in \cite{coraglia2categoricalAnalysisContext2024} are between comprehension categories and weakening comonads, and between weakening comonads and generalised categories with families. 
Extending this picture with the notions of categories with families/dependent arrows and the corresponding equivalence proved earlier, we get 
\begin{equation} \begin{tikzcd}
	\depSC \ar[r,"\text{\footnotesize 2-$\simeq$}"{description},phantom] \ar[r,shift right = 1.3ex]
 &\HCompCat \ar[drr,dashed,no head,"?", bend left = 20] \ar[l,shift right = 1.3ex]
 \\
 \Sigma-\mathsf{IC} \ar[r,shift left = 1.3ex] \ar[u] \ar[r,"\text{\footnotesize 2-$\simeq$}"{description},phantom] 
	& \CompCat \ar[l,shift left = 1.3ex] \ar[r,"\text{\footnotesize bi-$\simeq$}"{description}, phantom] \ar[r,shift right = 1.3ex] \ar[u]
	& \WCmd \ar[l,shift right = 1.3ex] \ar[r,"\text{\footnotesize bi-$\simeq$}",description, phantom] \ar[r,shift right = 1.3ex] 
	& \gCwF, \ar[l,shift right = 1.3ex]
	\label{gCwFhCCequiv}
\end{tikzcd} \end{equation}
where the relation indicated with ``?'' is the one we want to explore in the following paragraphs.
For this reason we compute the obtained generalised category with families from a (2-fam,$\Sigma$)-category \( \C C \) by applying all the functors in the above diagram.
This generalised category with families has as \( \dot{\mathcal{U}} \) the category \( \C E \) of family arrows and 2-family arrows, 
and \( \mathcal{U} \) is the category of \( K \)-coalgebras, for the weakening comonad \( (K,\epsilon,\nu) \) defined via
\begin{enumerate}
	\item\label{weakCmd::cd1} \( K(\lambda) = \lambda \circ \pr_1^\lambda \) for \( \lambda \in \fHom(c) \), and for \( (f,\eta) \colon \lambda \to \mu \) 
		\begin{equation}
			K(f,\eta) =\Big (\Grothendieck{\mu}{f}\circ \Grothendieck{\lambda,\mu}{\eta},\eta \bullet \pr_1^\lambda\Big).
		\label{genCatFam::eq1}\end{equation}
		The well-definedness can be seen from the commutativity of 
		\[ \begin{tikzcd}
			\Grothendieck{c}{\lambda} \ar[r,"\Grothendieck{\lambda,\mu \circ f}{\eta}"]
			\ar[d,"\pr_1^\lambda"']
			& \Grothendieck{c}{\mu \circ f} \ar[r,"\Grothendieck{\mu}{f}"]
			\ar[d,"\pr_1^{\mu \circ f}"]
			& \Grothendieck{c'}{\mu} \ar[d,"\pr_1^\mu"]
			\\
			c \ar[r,"1_c"'] & c \ar[r,"f"'] & c',
		\end{tikzcd} \]
			which yields \(\mu \circ \pr_1^\mu \circ \Grothendieck{\mu}{f} \circ \Grothendieck{\lambda,\mu \circ f}{\eta} = \mu \circ \pr_1^\lambda.\)
			
	\item \( \epsilon_\lambda =  (\pr_1^\lambda,1_{\lambda \circ \pr_1^{\lambda}}) \colon \lambda \circ \pr_1^\lambda \to \lambda \)
	\item\label{weakCmd::cd3} \( \nu_\lambda = (Q, 1_{\lambda \circ \pr_1^{\lambda} \circ \pr_1^{\lambda \circ \pr_1^{\lambda}}}) \colon \lambda \circ \pr_1^{\lambda} \to \lambda \circ \pr_1^\lambda\circ \pr_1^{\lambda \circ \pr_1^{\lambda}}  \) where \( Q \) stems from the pullback diagram
				\begin{equation} \begin{tikzcd}
					\Grothendieck{c}{\lambda} \ar[ddr,to path={(\tikztostart.south) |- (\tikztotarget.west) \tikztonodes}, rounded corners,"\id_{\Grothendieck{c}{\lambda}}"']
					\ar[drr,to path = {(\tikztostart.east) -| (\tikztotarget.north) \tikztonodes}, rounded corners, "\id_{\Grothendieck{c}{\lambda}}"]
							\ar[dr,"Q" description, dotted]
							\\
							&\Grothendieck{\Grothendieck{c}{\lambda}}{\lambda \circ \pr_1^{\lambda}} \ar[d,"\pr_1^{\lambda \circ \pr_1^{\lambda}}"'] \ar[r,"\Grothendieck{\lambda}{\pr_1^\lambda}"]
							& \Grothendieck{c}{\lambda} \ar[d,"\pr_1^{\lambda}"]
							\\
							&\Grothendieck{c}{\lambda} \ar[r,"\pr_1^{\lambda}"'] & c.
				\end{tikzcd}\label{genCatFam::diag1} \end{equation}
				
\end{enumerate}
This category \( \CoAlg(K) \) can be spelled out explicitly. It has as
\begin{itemize}
	\item objects arrows \( (\Phi,\eta) \colon \lambda \to \lambda \circ \pr_1^\lambda\) such that 
		\[ \begin{tikzcd}
			\lambda \ar[dr,"{(1_c,1_\lambda)}"'] \ar[r,"{(\Phi,\eta)}"] & \lambda \circ \pr_1^\lambda  \ar[d,"{(\pr_1^\lambda,1_\lambda)}"]
						    \\ & \lambda
		\end{tikzcd}
	\quad\hbox{and}\quad
\begin{tikzcd} 
	\lambda \ar[r,"{(\Phi,\eta)}"] \ar[d,"{(\Phi,\eta)}"']
	& \lambda \circ \pr_1^\lambda \ar[d,"\nu_\lambda"]
	\\
	\lambda \circ \pr_1^\lambda \ar[r,"{K(\Phi,\eta)}"'] & \lambda \circ \pr_1^{\lambda} \circ \pr_1^{\lambda \circ \pr_1^{\lambda}}
\end{tikzcd}
\]
		commute.
		The commutativity of the first diagram simply spells out that both \( \pr_1^\lambda \circ \Phi = 1_c \) and \( (1_\lambda \bullet \pr_1^{\lambda}) \circ \eta = 1_\lambda \), which simplifies to \( \eta = 1_\lambda \).
		This in turn can be used for the commutativity of the second diagram, which entails both
		\[
			Q \circ \Phi = \Grothendieck{\lambda \circ \pr_1^\lambda}{\Phi} \circ \Grothendieck{\lambda,\lambda \circ \pr_1^{\lambda}}{\eta}\circ \Phi  \quad\hbox{and}\quad \eta \bullet(\pr_1^\lambda\circ \Phi) \circ \eta = 1_{\lambda \circ \pr_1^\lambda \circ \pr_1^{\lambda\circ \pr_1^{\lambda}}} \bullet \Phi \circ \eta.
		\]
		We immediately see that the left hand side of the second equality simplifies to \( \eta \circ \eta = 1_\lambda \), and the right hand side simplifies to \( 1_\lambda \) as well, so this equality always holds.
		For the first equality, we observe that as \( \eta = 1_\lambda \) we have \( \Grothendieck{\lambda,\lambda \circ \pr_1^\lambda}{\eta} = 1_{\Grothendieck{c}{\lambda}} \), and we can extend the pullback diagram \eqref{genCatFam::diag1} to 
		\[ \begin{tikzcd}[execute at end picture={
			\rueckzug{2-2}{2-3}{3-2}{};
			\rueckzug{2-3}{2-4}{3-3}{};
		}]
			c \ar[r,"\Phi"]\ar[ddr,to path={(\tikztostart.south) |- (\tikztotarget.west) \tikztonodes}, rounded corners,"1_c"']\ar[dr,dashed]
							&[3em] \Grothendieck{c}{\lambda} \ar[d,"1_{\Grothendieck{c}{\lambda}}"]		\ar[drr,to path = {(\tikztostart.east) -| (\tikztotarget.north) \tikztonodes}, rounded corners, "\Phi"]
							\ar[dr,"Q", dotted, to path ={(\tikztostart.south east) -- (\tikztotarget.north west)\tikztonodes}]
							&[3em]
						\\[1em]
							&\Grothendieck{c}{\lambda} \ar[r,"\Grothendieck{\lambda \circ \pr_1^\lambda}{\Phi}"'] \ar[d,"\pr_1^\lambda"']
						&\Grothendieck{\Grothendieck{c}{\lambda}}{\lambda \circ \pr_1^{\lambda}} \ar[d,"\pr_1^{\lambda \circ \pr_1^{\lambda}}"'] \ar[r,"\Grothendieck{\lambda}{\pr_1^\lambda}"]							& \Grothendieck{c}{\lambda} \ar[d,"\pr_1^{\lambda}"]
							\\
							&c \ar[r,"\Phi"']	&\Grothendieck{c}{\lambda} \ar[r,"\pr_1^{\lambda}"'] & c.
				\end{tikzcd} \]
It is immediate that the dashed arrow has to be \( \Phi \), which yields the required equality.
Thus \( (\Phi,\eta) \) can be reduced to \( \Phi \), such that 
\[ \begin{tikzcd}
	c \ar[r,"\Phi"] \ar[dr,"1_c"'] & \Grothendieck{c}{\lambda} \ar[d,"\pr_1^{\lambda}"]
	\\
				       & c
\end{tikzcd} \]
commutes.
\item An arrow \( (f,\eta) \colon (\Phi \colon c \to \Grothendieck{c}{\lambda}) \to (\Psi\colon d \to \Grothendieck{d}{\mu})\) is an arrow \( f \colon c \to d  \) together with a 2-fam-arrow \( \eta \colon \lambda \To \mu \circ f \) such that 
	\[ \begin{tikzcd}
	\lambda \circ \pr_1^\lambda \ar[from = r,"{(\Phi,1_\lambda)}"] \ar[d,"{(\Grothendieck{\mu f} \circ \Grothendieck{\lambda,\mu}{\eta}, \eta \bullet \pr_1^\lambda)}"']
		& \lambda \ar[d,"{(f,\eta)}"]
		\\
	\mu \circ \pr_1^{\mu} \ar[from = r,"{(\Psi,1_\mu)}"] & \mu
	\end{tikzcd} \]
	commutes. This can be reduced to the commutativity of both 
	\[
		 \Grothendieck{\mu}{f} \circ \Grothendieck{\lambda,\mu}{\eta} \circ \Phi = \Psi \circ f \quad
		\hbox{and}\quad
		\big(\eta \bullet( \pr_1^{\lambda}\circ \Phi)\big) \circ 1_\lambda  = (1_\mu \bullet  f)\circ \eta.
	\]
	Again the second equality is immediate (both sides equate to \( \eta \)), so only the first equality has to be checked. 
	Thus the condition on \( (f,\eta) \) reduces to the commutativity of 
	\[ \begin{tikzcd}
		c\ar[dd,"1_c"'] \ar[dr,"\Phi"] \ar[rr,"1_c"]&&[2em] c \ar[dd,"1_c"'{near end}] \ar[dr,"\eta \circ \Phi"] \ar[rr,"f"]
							  &&d  \ar[dd,"1_d"{swap,near end}] \ar[dr,"\Psi"] 
\\
							  & \Grothendieck{c}{\lambda} \ar[dl,"\pr_1^{\lambda}"]
							  \ar[rr,"\Grothendieck{\lambda,\mu \circ f}{\eta}"{near start},crossing over]
							  &&\Grothendieck{c}{\mu \circ f} \ar[dl,"\pr_1^{\mu\circ f}"]
							  \ar[rr,"\Grothendieck{\mu}{f}"{near start}, crossing over]
							  && \Grothendieck{d}{\mu} \ar[dl,"\pr_1^{\mu}"]
		\\  c \ar[rr,"1_c"'] && c \ar[rr,"f"'] && d.
	\end{tikzcd} \]
	The only nontrivial condition of this is \(\Grothendieck{\mu}{f} \circ \eta \circ \Phi = \Psi \circ f \).
	However, this condition is equivalent to \( \eta \circ \Phi = \Psi \circ f \).
\end{itemize} 
It is thus immediate that the objects of \( \CoAlg(K) \) are the canonical dependent arrows of a (2-fam,$\Sigma$)-category introduced in \cite{ehrhardt2depCategories2024} and the arrows are obtained as combinations of lifts of \( (f,\eta) \colon \lambda \to \mu \) such that \( \eta \circ \Phi = [\Psi](f) \).

Conversely, if we first compute the canonical dependent arrow structure for a (2-fam,$\Sigma$)-category and then the higher comprehension category obtained from the 2-equivalence consists of the following data:
\begin{itemize}
	\item The comprehension category structure \(P \colon \C E \to \C B^{\to} \) consists of (2-)family arrow and \( \C C \)
	\item The category \( \C G  \) has as objects coalgebras \( c \to \Grothendieck{c}{\lambda} \) for the weakening comonad \( K \) described earlier. 
		Morphisms \( \Phi \to \Psi \)---where \( q(\Phi)= c \) and \( p(\Psi) = c' \)---are given by \( (f,\eta) \colon \lambda \to \mu \) such that \( \eta \circ \Phi = [\Psi](f) \).
\end{itemize} 
From the discussion above we thus infer that \( \C G = \CoAlg(K) \).

Comparing the functors, we compute that \( \Sigma \) takes \( \Phi\colon c \to \Grothendieck{c}{\lambda} \) to \( \lambda \) and \( (f,\eta) \) to \( (f,\eta) \), hence \( q = \Sigma \).
The functor \( \Delta \) takes \( \lambda \) to \( Q \) from \( \nu_\lambda = (Q, 1_{\lambda \circ \pr_1^\lambda \circ \pr_1^{\lambda \circ \pr_1^\lambda}}) \) and \( (f,\eta) \) to \( K(f,\eta) \) defined in \eqref{genCatFam::eq1}.
Examining how the second projection arrow is defined in the canonical way in \cite{petrakisCategoriesDependentArrows2023} we see that \( \pr_2^\lambda = Q \).
A similar computation shows that also \( \pr_2(f,\eta) = K(f,\eta) \).
Thus \( \pr_2(\lambda) = Q \).

The adjunction 
\[ \begin{tikzcd}
	\CoAlg(K) \ar[r,shift right = .8em,"\Sigma"{name = U,swap}]
	& \C E. \ar[l,shift right = .8em, "\Delta"{name = V, swap}]
\ar[from = U, to = V, "\dashv"{sloped,description}, phantom]
\end{tikzcd} \]
is not part of the higher comprehension category, but the natural transformation \( \chi\colon \lambda \circ \pr_1^\lambda \to \lambda\) can be extended to obtain this adjunction, as \( \chi \) and the counit coincide.

Thus we obtain that the diagram~\eqref{gCwFhCCequiv} can be extended to
\[ \begin{tikzcd}
	\depSC \ar[r,"\text{\footnotesize 2-$\simeq$}"{description},phantom] \ar[r,shift right = 1.3ex]
 &\HCompCat \ar[from = drr, bend right = 20] \ar[l,shift right = 1.3ex]
 \\
 \Sigma-\mathsf{IC} \ar[r,shift left = 1.3ex] \ar[u] \ar[r,"\text{\footnotesize 2-$\simeq$}"{description},phantom] 
	& \CompCat \ar[l,shift left = 1.3ex] \ar[r,"\text{\footnotesize bi-$\simeq$}"{description}, phantom] \ar[r,shift right = 1.3ex] \ar[u]
	& \WCmd \ar[l,shift right = 1.3ex] \ar[r,"\text{\footnotesize bi-$\simeq$}",description, phantom] \ar[r,shift right = 1.3ex] 
	& \gCwF. \ar[l,shift right = 1.3ex]
\end{tikzcd}
\]
The biequivalence \( \CompCat \simeq \gCwF \)  proved in \cite{coraglia2categoricalAnalysisContext2024} actually yields more than \cite[Theorem~3.5.8]{ehrhardt2depCategories2024}: not only does it give the canonical (dep,$\Sigma$)-structure given a (2-fam,$\Sigma$)-category, but also says that if we are given a (dep,$\Sigma$)-category, such that in the higher comprehension category, formed as in Lemma~\ref{2depSigmatohCC}, the natural transformation \( \chi \colon q \circ \pr_2 \To \id_{\C E} \) can be extended to an adjunction \( q \dashv \pr_2 \), then we obtain a pseudo-natural isomorphism \( \C G \to \CoAlg(K) \), where \( K \) is the comonad defined by bullet points \ref{weakCmd::cd1}--\ref{weakCmd::cd3} on page \pageref{weakCmd::cd1}.

However, not every (dep,$\Sigma$)-structure must be a canonical one. 
Observe that the (dep,$\Sigma$)-structure on a category obtained from a ring as in Examples~\hyperref[exm::ring1]{\ref{exm::typecategories}.\ref*{exm::ring1}}, \hyperref[exm::ring2]{\ref*{exm::indexedcategories}.\ref*{exm::ring2}} and~\ref{exm::ring3} is not the canonical one, as then the dependent arrows would be the arrows \(y \colon \ast \to \ast\) such that \(y + a \cdot b = 0  \), that is \( y = -a \cdot b \).
But as generally \( -a \cdot b \neq a - b \), these dependent arrows are not the same.

\chapter{Conclusions and future tasks}

In this paper we showed that the notion of a dependent structure on an indexed category with Sigma-objects generalises the notion of generalised categories with families, and that there are examples of such categories that can not be recovered within the latter approach. 
Thus, translating this back into the original notion of Petrakis and Ehrhardt, we proved the claim of the title, (2-dep,$\Sigma$)-categories are distinct from generalised categories with families and more general than these. 
The following tasks fall out of the scope of this paper, but we hope to undertake them in future work.
\begin{enumerate}
	\renewcommand{\labelenumi}{\textbf{\theenumi}.}
	\item
		It is possible to dualise the notion of a comprehension category, by requiring that in 
		\[ \begin{tikzcd}
			\C E \ar[dr,"q"'] \ar[rr,"Q"] && \C B^\to \ar[dl,"\dom"] 
			\\
						     &\C B
		\end{tikzcd} \]
		 \( q \) is an opfibration, and that \( Q \) sends \( q \)-cocartesian morphism to pushout squares in \( \C B^\to\).
		 This notion is mentioned in the introduction of~\cite{melliesComprehensionQuotientStructures2020}, but not elaborated on in this paper.
		 In~\cite{fumexInductionCoinductionSchemes2012} quotients for a fibration \( p \colon \C E \to \C B \) with a section \( \star \colon \C B \to \C E \) are defined as a left adjoint \( Q \dashv \star \).
		 From the unit \( \epsilon \colon \id_\C E \To \star \circ Q\) the authors then define a functor \( \pi \colon \C E \to \C B^\to \) by \( \pi(e) = p(\epsilon_e) \). 
		 This functor defines a commutative triangle
			\[ \begin{tikzcd}
				\C E \ar[dr,"p"'] \ar[rr,"\pi"] && \C B^\to \ar[dl,"\dom"] 
				\\
							     &\C B,
			\end{tikzcd} \]
		but \( p \) is a fibration. 
		Conversely, they define \emph{tC-opfibrations}, but these yield commutative squares
		\[ \begin{tikzcd}
			\C E \ar[dr,"q"'] \ar[rr,"Q"] && \C B^\to \ar[dl,"\codom"] 
			\\
						     &\C B,
		\end{tikzcd} \]
		where \( q \) is an opfibration. 
		Hence, it remains to find interesting examples of these ``co''comprehension categories, and check whether they can be extended to examples of higher ``co''comprehension categories.
	\item 
		Given a fibration \( p \colon \C E \to \C B \) and an opfibration \( q \colon \C F \to \C B \), one can form the pullback 
		\begin{equation} \begin{tikzcd}[execute at end picture={
			\rueckzug{1-1}{1-2}{2-1}{};
		}]
			p^*(\C F) \ar[d,"q^*p"'] \ar[r,"p^*q"] & \C E \ar[d,"p"]
			\\
			\C F \ar[r,"q"] & \C B,
		\end{tikzcd}
	\label{Concl::diag1}\end{equation}
		to obtain a ``combined'' structure on \( \C B \), through the diagonal.
		Alternatively, one can first form the corresponding functors \( F_p \colon \C B\opp \to \Cat \), \( F_q \colon \C B \to \Cat \), and then consider the functor \( \hat F \colon \C B\opp \times \C B \to \Cat \), defined as \( \hat F(b,b') = F_p(b) \times F_q(b') \).
		These approaches are not obviously connected, exemplified by the following diagram
		\[ \begin{tikzcd}
			\Fun(\C B\opp,\Cat) \times \Fun(\C B,\Cat) \ar[d] \ar[r] & \Fib_{\spl}(\C B) \times \OpFib_{\spl}(\C B) \ar[d]
			\\
			\Fun(\C B\opp \times \C B,\Cat) \ar[r] & ?
		\end{tikzcd} \]
		being \emph{not} commutative. 
		However, a connection would allow to transfer the obvious benefits of one approach to the other:
		\begin{itemize}
			\item 
				The first approach allows one to immediately ``attach'' the date of a comprehension category (and its dual), so the only question is what properties the diagonal in~\eqref{Concl::diag1} exhibits.
			\item 
				The latter approach does not allow to immediately transfer the definition of Sigma- and quotient-objects to the new functor, but instead gives the starting point for the following generalisation: 
				The category \( \C B\opp \times \C B \) is obviously self-dual, so it remains to see whether one can replace it with, for example a \( \ast \)-autonomous category (see~\cite{barrastAutonomousCategories1979,barrastAutonomousCategoriesOnce1999}--.which can be obtained from the Chu-construction \cite{chuConstructingastautonomousCategories}---and see what properties \( \hat F \) must have, to allow a decomposition into functors of the kind \( \C B\opp \to \Cat  \) and \( \C B \to \Cat \).
		\end{itemize}
		We hope to establish such a connection to unify the two approaches.
	\item 
		In~\cite{najmaeiSemanticsSyntaxType2025} a type theory for comprehension categories is developed, called CCTT, and is equipped with type formers for dependent functions.
		On the side of comprehension categories, these type formers are mirrored by dependent products, which are also introduced in this paper. 
		This definition of dependent products in a comprehension category differs from the dependent products on comprehension categories introduced by Jacobs in~\cite{jacobsComprehensionCategoriesSemantics1993}, albeit as the authors of~\cite{najmaeiSemanticsSyntaxType2025} note, they coincide if the comprehension category is full. 
		We wish to examine whether their type theory can be modified---e.g. by omitting some rules or relaxing prerequisites---or whether it needs to be extended appropriately, in order to obtain a type theory for higher comprehension categories.
\end{enumerate}

\small
\printbibliography

\end{document}